\date{}
\newtheorem{lem}{Lemma} 
\newtheorem{theorem}{Theorem}
\newtheorem{prop}{Proposition}
\newtheorem{cor}{Corollary}
\newtheorem{rmk}{Remark}
\newtheorem{assump}{Assumption}
\def\mbb{\mathbb}
\def\mb{\mathbf}
\def\mc{\mathcal}
\def\wh{\widehat}
\def\wt{\widetilde}
\def\ol{\overline}
\def\ul{\underline}
\def\bds{\boldsymbol}
\def\ra{\rightarrow}
\def\R{\mbb{R}}
\def\a{\alpha}
\def\E{\mbb{E}}
\def\F{\mc{F}}
\def\s{\sigma}
\def\l{\langle}
\def\r{\rangle}
\def\x{\mb{x}}
\def\y{\mb{y}}
\def\g{\mb{g}}
\def\P{\mbb{P}}
\def\G{\mb{G}}
\def\H{\mb{H}}
\def\W{\mb{W}}
\def\J{\mb{J}}
\def\I{\mb{I}}
\def\vp{\varphi}
\def\nf{\nabla\mb{f}}
\def\1{\mathbbm{1}}
\def\DSGT{\textbf{\texttt{GT-DSGD}}}
\def\SGD{\textbf{\texttt{SGD}}}
\def\DSGD{\textbf{\texttt{DSGD}}}
\def\l{\left\langle}
\def\r{\right\rangle}
\def\bxi{\bds\xi}
\def\agk{\ol{\g}_k}
\def\ln{\left\|}
\def\rn{\right\|}
\def\n{\nonumber}
\def\u{\mb{u}}
\newenvironment{TH0}
  {\begin{proof}[Proof of Theorem~\ref{conv_ncvx}]}
  {\end{proof}}
\newenvironment{TH1}
  {\begin{proof}[Proof of Theorem~\ref{conv_PL}]}
  {\end{proof}}
\newenvironment{TH2}
  {\begin{proof}[Proof of Theorem~\ref{PL_as}]}
  {\end{proof}}
\newenvironment{TH3}
  {\begin{proof}[Proof of Theorem~\ref{F_ave_rate}]}
  {\end{proof}}
\newenvironment{C2}
  {\begin{proof}[Proof of Corollary~\ref{TRT}]}
  {\end{proof}}
\begin{document}
\title{An improved convergence analysis for decentralized online stochastic non-convex optimization}
\author{Ran Xin,  Usman A. Khan, and Soummya Kar
		\thanks{RX and SK are with the ECE Dept. at Carnegie Mellon University, \texttt{\{ranx,soummyak\}@andrew.cmu.edu}. UAK is with the ECE Dept. at Tufts University, \texttt{khan@ece.tufts.edu}. The work of SK and RX has been partially supported by NSF under award \#1513936. The work of UAK has been partially supported by NSF under awards \#1903972 and \#1935555. 
		}
	}
\maketitle

\begin{abstract}
In this paper, we study decentralized online stochastic non-convex optimization over a network of nodes. Integrating a technique called gradient tracking in decentralized stochastic gradient descent, we show that the resulting algorithm, \textbf{\texttt{GT-DSGD}}, enjoys certain desirable characteristics towards minimizing a sum of smooth non-convex functions. In particular, for general smooth non-convex functions, we establish non-asymptotic characterizations of \textbf{\texttt{GT-DSGD}} and derive the conditions under which it achieves network-independent performances that match the centralized minibatch \textbf{\texttt{SGD}}. In contrast, the existing results suggest that \textbf{\texttt{GT-DSGD}} is always network-dependent and is therefore strictly worse than the centralized minibatch \textbf{\texttt{SGD}}.
When the global non-convex function additionally satisfies the Polyak-\L ojasiewics (PL) condition, we establish the linear convergence of \textbf{\texttt{GT-DSGD}} up to a steady-state error with appropriate constant step-sizes. Moreover, under stochastic approximation step-sizes, we establish, for the first time, the optimal global sublinear convergence rate on almost every sample path, in addition to the asymptotically optimal sublinear rate in expectation. Since strongly convex functions are a special case of the functions satisfying the PL condition, our results are not only immediately applicable but also improve the currently known best convergence rates and their dependence on problem parameters.

\begin{IEEEkeywords}
Decentralized optimization, stochastic gradient methods, non-convex problems, multi-agent systems.
\end{IEEEkeywords}
\end{abstract}

\section{Introduction}\label{intro}
This paper considers decentralized non-convex optimization where~$n$ nodes cooperate to solve the following problem:
\begin{align*}
\mbox {P1:} \qquad \min_{\mb x\in \mbb R^p} F(\mb x) :=\frac{1}{n}\sum_{i=1}^n f_i(\mb x),
\end{align*}
such that each function~${f_i:\mbb R^p\ra\mbb R}$ is local and private to node~$i$ and the nodes communicate over a balanced directed graph ${\mc G = \{\mc{V},\mc{E}\}}$, where~${\mc{V} = \{1,\cdots,n\}}$ is the set of node indices and~$\mc{E}$ is the collection of ordered pairs~${(i,j),i,j\in\mc{V}}$, such that node~$j$ sends information to node~$i$. Throughout the paper, we assume that each local~$f_i$ is smooth and non-convex. We focus on an \textit{online}\footnote{\color{black}We note that ``online" sometimes also refers to time-varying objective functions, which is different from the problem setup in this paper.}  setup where data samples are collected in real-time and hence each node~$i$ only has access to a noisy sample~$\mb g_i$ of the true gradient at each iteration, 
such that~$\mb g_i$ is an unbiased estimate of $\nabla f_i$ with bounded variance. 
Problems of this nature have found significant interest in signal processing, machine learning, and control.
See e.g.,~\cite{OPT_ML,PIEEE_Xin}, for comprehensive surveys on these problems. 

Based on the classical stochastic gradient descent~(\SGD)~\cite{OPT_ML}, a well-known solution to Problem~P1 is decentralized \textbf{\texttt{SGD}} (\textbf{\texttt{DSGD}}) \cite{DSGD_nedich,diffusion_Chen}.
However, the convergence of~\textbf{\texttt{DSGD}} for non-convex problems~has only been established under certain regularity assumptions such as uniformly bounded difference between local and global gradients~\cite{DSGD_NIPS,SGP_ICML,DSGD_vlaski_2}, or coercivity of each local function~\cite{DSGD_Swenson}. It has also been observed that if the data distributions across the nodes are heterogeneous, the practical performance of~\textbf{\texttt{DSGD}}~degrades significantly~\cite{MP_Pu,SED,PIEEE_Xin}. One notable line of work towards improving the performance of \textbf{\texttt{DSGD}} is EXTRA~\cite{EXTRA} and Exact Diffusion~\cite{Exact_Diffusion}, where the convergence under the stochastic non-convex setting is established without the aforementioned regularity assumptions~\cite{D2}; however, they require the weight matrix to be symmetric and the smallest eigenvalue is lower bounded by $-1/3$. Another family of algorithms to eliminate the performance limitation of \textbf{\texttt{DSGD}} is based on gradient tracking, introduced in~\cite{AugDGM,Next_Scutari}, where the basic idea is to replace the local gradients with a tracker of the global gradient $\nabla F$. 
Decentralized first-order methods with gradient tracking have been well studied under exact gradients, where relevant work can be found, e.g., in~\cite{harnessing,DIGing,MP_scutari,AB_Xin,PushPull_Pu}. However, the convergence behavior of gradient tracking methods has many unanswered questions when it comes to non-convex online stochastic problems~\cite{GNSD,DSGT_KY}. 

\textbf{Main contributions.} This paper considers \textbf{\texttt{GT-DSGD}}~\cite{MP_Pu}, that adds gradient tracking to \textbf{\texttt{DSGD}}, for online stochastic non-convex problems and rigorously develops novel results, key insights, and new analysis techniques that fill the theory gaps in the existing literature on gradient tracking methods~\cite{GNSD,DSGT_KY,MP_Pu}. The main contributions are described in the following: 

\noindent \textit{(1) General smooth non-convex problems}: 
We explicitly characterize the non-asymptotic, transient and steady-state performance of \textbf{\texttt{GT-DSGD}} and derive the conditions under which they are comparable to that of the centralized minibatch \textbf{\texttt{SGD}}. In particular, we show that its non-asymptotic mean-squared rate is network-independent and further matches the centralized minibatch \textbf{\texttt{SGD}} when the number of iterations is large enough. In sharp contrast, the existing results in~\cite{GNSD,DSGT_KY} suggest that the convergence rate and steady-state performance of \textbf{\texttt{GT-DSGD}} are always network-dependent and therefore
are strictly worse than that of the centralized minibatch \textbf{\texttt{SGD}}; see Section~\ref{sec_main_ncvx} for details. 

\noindent \textit{(2) Problems satisfying the global Polyak-\L ojasiewicz (PL) condition}: We analyze \textbf{\texttt{GT-DSGD}} when the global (smooth non-convex) function~$F$ further~satisfies the PL condition. For both constant and decaying step-sizes, we explicitly characterize the non-asymptotic, transient and steady-state behaviors in expectation, and establish the conditions under which they are comparable to that of the centralized minibatch \textbf{\texttt{SGD}}. We further establish global sublinear convergence rates on almost every sample path. The obtained sample path-wise rates are order-optimal (in the sense of polynomial time decay). To the best of our knowledge, these are the first results on path-wise convergence rate for online decentralized stochastic optimization under non-convexity, thus generalizing prior results in the decentralized stochastic approximation literature, e.g.,~\cite{GLE_kar}, where the convergence analysis is mostly performed under assumptions of local convexity. As special cases, these results improve the current state-of-the-art on exact gradient methods under the PL condition~\cite{ZO_GT} and stochastic strongly convex problems~\cite{MP_Pu}; see Section~\ref{sec_main_PL} for~details. 

\noindent\textit{(3) Convergence analysis:} {\color{black}We emphasize that the analysis techniques in this work are substantially different from the existing ones~\cite{MP_Pu},~\cite{GNSD},~\cite{DSGT_KY} and may be applied to other gradient methods built upon similar principles. We describe a few key features in the following. 
We establish tighter bounds on the stochastic gradient tracking process,  by exploiting the unbiasedness of the online stochastic gradients, based on which all convergence theorems are derived; see Section~\ref{sec_GT_general_bounds}.
To prove the convergence under general non-convexity, we characterize a descent inequality explicitly with network consensus errors and further show that the cumulative consensus errors along the algorithm path are dominated by the cumulative descent effect of the local gradients; see Section~\ref{sec_ncvx_proof}.  
Towards the convergence analysis under the global PL condition, we derive the uniform boundedness of gradient tracking errors that is crucial in simplifying the ensuing analysis; see Lemma~\ref{Y_bounded}. Subsequently, we construct an appropriate stochastic process that forms an almost supermartingale~\cite{RS_theorem} to prove sublinear rates on almost every sample path; see Section~\ref{S_PL_as}. To develop the convergence results in mean under the global PL condition, we use the analytical tools developed for recursive processes with time-varying step-sizes; see Section~\ref{S_PL_ms_decay}.}

\textbf{Road map and notation. }
{\color{black}The rest of the paper is organized as follows. Section~\ref{S_aa} describes the assumptions and the \textbf{\texttt{GT-DSGD}} algorithm. In Section~\ref{S_mr}, we present the main results and discuss the contributions of this work in the context of the current state-of-the-art, whereas Section~\ref{sec_main_ncvx} and~\ref{sec_main_PL} respectively focus on the general non-convex and the PL case. We present detailed numerical experiments in Section~\ref{s_exp} to demonstrate the main theoretical results in this paper. Section~\ref{S_ncvx} establishes general bounds on the stochastic gradient tracking process and proves the convergence for smooth non-convex functions. Sections~\ref{S_PL_ms_cst},~\ref{S_PL_as} and~\ref{S_PL_ms_decay} provide the convergence analysis under the PL condition on top of the results obtained in Section~\ref{S_ncvx}. In particular, Sections~\ref{S_PL_ms_cst} and~\ref{S_PL_ms_decay} focus on the convergence in mean with constant and decaying step-sizes respectively while Section~\ref{S_PL_as} focuses on the almost sure convergence. Section~\ref{S_conc} concludes the paper.

We use lowercase bold letters to denote vectors and uppercase bold letters for matrices. The matrix,~$\mb{I}_d$ (resp.~$\mb{O}_d$), represents the~$d\times d$ identity (resp. zero matrix); $\mb{1}_d$ and~$\mb{0}_d$ are the~$d$-dimensional column vectors of all ones and zeros, respectively. We denote~$[\x]_i$ as the~$i$-th entry of a vector~$\x$.
The Kronecker product of two matrices~$\mb{A}$ and~$\mb{B}$ is denoted by~$\mb{A}\otimes \mb{B}$. We use~$\left\|\cdot\right\|$ to denote the Euclidean norm of a vector or the spectral norm of a matrix. For a matrix~$\mb{X}$, we use~$\rho(\mb{X})$ to denote its spectral radius,~$\mb{X}^*$ to denote its adjugate,~$\det(\mb X)$ to denote its determinant,~$[\mb{X}]_{i,j}$ to denote its~$(i,j)$th element and~$\mbox{diag}(\mb{X})$ as the diagonal matrix that consists of the diagonal entries of~$\mb{X}$. Matrix-vector inequalities are interpreted in the entry-wise sense.  We use~$\sigma(\cdot)$ to denote the~$\sigma$-algebra generated by the random variables and/or sets in its argument.} 

\section{Assumptions and the~\DSGT~Algorithm}\label{S_aa}
We are interested in finding a first-order stationary point of Problem P1 via local computation and communication at each node. We first enlist the necessary assumptions that are standard in the literature~\cite{OPT_ML,MP_Pu,SED,book_polyak}.

\begin{assump}[\textbf{Objective functions}]\label{f}
Each~$f_i$ is~$L$-smooth, i.e.,~$\exists L>0$ s.t.
$\|\nabla f_i(\mb{x}) - \nabla f_i(\mb{y})\| \leq L\|\x - \mb{y}\|,\forall \x,\mb{y}\in\R^p.$
Moreover,~$F$ is bounded below, i.e.,~$F^*:=\inf_{\x}F(\x)>-\infty$.
\end{assump}

\begin{assump}[\textbf{Network model}]\label{net}
The directed communication network is strongly-connected and admits a primitive doubly-stochastic weight matrix~$\ul{\mb{W}} = \{\ul{w}_{ir}\}\in\R^{n\times n}$.
\end{assump}

We consider iterative processes that generate at each~node~$i$ a sequence of state vectors~${\{\x^i_k: k\geq0\}}$, where~$\mb{x}^i_0$ is assumed to be a constant. At each iteration~$k$, each node~$i$ is able to call the local oracle that returns a stochastic gradient~$\g_i(\x^i_k,\bxi^i_k)$, where~$\bxi^i_k$ is a random vector in $\R^q$ and ${\g_i:\R^p\times\R^q\ra\R^p}$ is a Borel-measurable function. 
For example, $\g_i(\x^i_k,\bxi^i_k)$ may be considered as the stochastic gradient evaluated at the state~$\x_k^i$ with the data sample~$\bxi_k^i$ observed at node~$i$ and iteration~$k$. 
We work with a rich enough probability space $(\Omega,\F,\P)$ and define the natural filtration (an increasing family of~sub-$\s$-algebras of~$\F$) as, $\forall k\geq1$,
\begin{align*}
\F_k := \s\left(\left\{\bxi^i_t: 0\leq t\leq k-1, i\in\mc{V} \right\}\right), \qquad\F_0 :=&~\{\Omega,\phi\},
\end{align*}
where~$\phi$ is the empty set. 
The intuitive meaning of~$\F_k$ is that it contains the historical information of the algorithm iterates in question up to iteration~${k-1}$. 

\begin{assump}[\textbf{Oracle model}]\label{o}
The stochastic gradient process~$\{\g_i(\x_k^i,\bxi^i_k):\forall k\geq0,\forall i\in\mc{V}\}$ satisfies:
\begin{itemize}
	\item $\E\left[\g_i(\x_k^i,\bxi^i_k)|\F_k\right] = \nabla f_i(\x^i_k), \forall k\geq0, \forall i\in\mc{V}$;
	\item $\E\big[\ln\g_i(\x^i_k,\bxi^i_k) - \nabla f_i(\x^i_k)\rn^2 | \F_k\big]\leq \nu_i^2, \forall k\geq0, \forall i\in\mc{V}$, for some constant~$\nu_i>0$;
	\item The family~$\left\{\bxi_k^i: \forall k\geq0, \forall i\in\mc{V} \right\}$ of random vectors is independent.
\end{itemize}
\end{assump}

We denote ${\nu_a^2 := \frac{1}{n}\sum_{i=1}^n\nu_i^2}$, the average of the variance of local stochastic gradients.
We are also interested in the case when \emph{the global objective function $F$} further satisfies the Polyak-Łojasiewicz (PL) condition that was introduced in~\cite{book_polyak}.
\begin{assump}\label{PL}
$\exists\mu > 0$ s.t. the global function~$F:\R^p\ra\R$ satisfies~$2\mu\left(F(\x) - F^*\right) \leq \ln\nabla F(\x) \rn^2, \forall \x\in\mbb{R}^p$.  
\end{assump}
\noindent When Assumption~\ref{PL} holds, we denote~$\kappa: = \frac{L}{\mu}\geq1$, which can be interpreted 
as the condition number of~$F$; see Lemma~\ref{upperL}.
Note that under the PL condition, every stationary point~$\x^*$ of~$F$ is a global minimum of~$F$, while~$F$ is not necessarily convex. Assumption~\ref{PL} holds, e.g., in certain reinforcement learning problems~\cite{RL_PL}, see~\cite{book_polyak,PL_1} for more details.

\vspace{0.2cm}
\noindent \textbf{Algorithm.}
\DSGT, introduced in~\cite{MP_Pu} for smooth strongly convex problems and formally described in Algorithm~\ref{alabel}, recursively descends in the direction of an auxiliary variable~$\mb y_k^i$ at each node, instead of the local stochastic gradient~$\mb g_i(\mb x_k^i,\bxi_k^i)$. The auxiliary variable~$\mb y_k^i$ is constructed under the dynamic average consensus principle~\cite{DAC} and tracks a time-varying signal~${\sum_i \mb g_i(\mb x_k^i,\bxi_k^i)}$, which mimics the global gradient; see~\cite{MP_Pu,PIEEE_Xin} for further intuition and explanation.  We note that \textbf{\texttt{GT-DSGD}} uses the adapt-then-combine (ATC) structure~\cite{diffusion_Chen} resulting in improved stability of the algorithm.

\begin{algorithm}[tbph]
\caption{\textbf{\texttt{GT-DSGD}} at each node~$i$}
\label{alabel}
\begin{algorithmic}[1]
\Require{$\x^i_{0}$;~$\{\alpha_k\}$;~$\{\ul{w}_{ir}\}$;
$\mb{y}_i^{0} = \mb{0}_p$; $\g_r\big(\x_{-1}^r,\bxi_{-1}^r\big) := \mb{0}_p$.}
\For{$k = 0, 1, \ldots,$}
\vspace{-0.1cm}
\begin{align*}
\mb{y}^{i}_{k+1} &= \sum_{r=1}^{n}\ul{w}_{ir}\big(\mb{y}^r_{k} + \g_r(\x^r_k,\bxi^r_k) - \g_r(\x^r_{k-1},\bxi^r_{k-1})\big)\\\mb{x}^{i}_{k+1} &= \sum_{r=1}^{n}\ul{w}_{ir}\big(\mb{x}^{r}_{k} - \a_k\mb{y}^{r}_{k+1}\big)
\end{align*}
\vspace{-0.3cm}
\EndFor
\end{algorithmic}
\end{algorithm}

\section{Main results}\label{S_mr}
In this section, we present our main convergence results for \textbf{\texttt{GT-DSGD}} and compare them with the corresponding state-of-the-art. For analysis purposes and the ease of presentation of main results, we let~$\x_k,\y_k,\g_{k}$, all in~$\R^{np}$, respectively concatenate~$\x_k^i$'s,~$\y_k^i$'s,~$\g_i(\x_k^i,\bxi_k^i)$'s, and write \textbf{\texttt{GT-DSGD}} in the following matrix form:~$\forall k\geq0$,
\begin{subequations}
\begin{align}
\mb{y}_{k+1} =&~\mb{W}\left(\mb{y}_{k} + \g_{k} - \g_{k-1}\right), \label{y}\\
\mb{x}_{k+1} =&~\mb{W}\left(\mb{x}_{k} - \a_k\mb{y}_{k+1}\right), \label{x}
\end{align}
\end{subequations}
where~${\mb{W} = \ul{\mb{W}}\otimes \I_p}$. We denote the exact averaging matrix as~${\mb{J} := (\frac{1}{n}\mb{1}_n\mb{1}_n^\top)\otimes \I_p}$ and ${\lambda := \|\mb{W} - \mb{J}\|}$, which characterizes the network connectivity. Under Assumption~\ref{net}, we have~${\lambda\in[0,1)}$; see \cite{book_matrix_analysis}.
For convenience, we let $\nf_k\in\R^{np}$ concatenate all local exact gradients~$\nabla f_i(\x_k^i)$'s and denote
\begin{align*}
\ol{\mb x}_k :=&~\frac{1}{n}(\mb{1}_n^\top\otimes \I_p)\mb{x}_k, \quad\ol{\mb y}_k :=\frac{1}{n}(\mb{1}_n^\top\otimes \I_p)\mb{y}_k, \\
\ol{\nf}_{k} :=&~\frac{1}{n}(\mb{1}_n^\top\otimes \I_p)\nabla\mb{f}_k,\quad
\ol{\g}_k := \frac{1}{n}(\mb{1}_n^\top\otimes\I_p)\g_{k}.
\end{align*}
We assume without loss of generality that~$\mb{x}_0^i = \mb{x}_0^r,\forall i,r\in\mc{V}$.

\subsection{General smooth non-convex functions}\label{sec_main_ncvx}
In this subsection, we are concerned with the convergence of~\DSGT~for general smooth non-convex functions.

\begin{theorem}\label{conv_ncvx}
Let Assumptions~\ref{f},~\ref{net}, and~\ref{o} hold and consider \textbf{\texttt{GT-DSGD}} under a constant step-size~${\a_k = \a,\forall k\geq0}$, such that~${0<\a\leq\min\big\{1,\frac{1-\lambda^2}{12\lambda},\tfrac{(1-\lambda^2)^2}{4\sqrt{6}\lambda^2}\big\}\frac{1}{2L}}$, then,~$\forall K>1$,
\begin{align*}
&\underbrace{\frac{1}{n}\sum_{i=1}^n\frac{1}{K}\sum_{k=0}^{K-1}\E\left[\big\|\nabla F(\x_k^i)\big\|^2\right]}_{\text{Mean-squared stationary gap}}
\leq\underbrace{\frac{4(F(\ol{\x}_{0}) - F^*)}{\a K}  
+\frac{2\a\nu^2_a L}{n}}_{\text{Centralized minibatch SGD}} \n\\
&\qquad\qquad\qquad\quad+\underbrace{\frac{448\a^2L^2\lambda^2\nu_a^2}{(1-\lambda^2)^3} + \frac{64\a^2L^2\lambda^4}{(1-\lambda^2)^3 K}\frac{\left\|\nf_0\right\|^2}{n}}_{\text{Decentralized network effect}}.
\end{align*}
Further,~$\frac{1}{n}\sum_{i=1}^n\frac{1}{K}\sum_{k=0}^{K-1}\E\big[\|\nabla F(\x_k^i)\|^2\big]$ decays at the rate of~$\mc{O}(\frac{1}{K})$ up to a steady-state error such that
\begin{align*}
\limsup_{K\ra\infty}\frac{1}{n}\sum_{i=1}^n\frac{1}{K}&\sum_{k=0}^{K-1}\E\left[\ln\nabla F(\x_k^i)\rn^2\right] \n\\
\leq&~\underbrace{\frac{2\a\nu^2_a L}{n}}_{\text{Centralized minibatch SGD}}  +\underbrace{\frac{448\a^2L^2\lambda^2\nu_a^2}{(1-\lambda^2)^3}}_{\text{Decentralized network effect}}.
\end{align*}
\end{theorem}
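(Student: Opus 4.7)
My plan is to combine three coupled one-step inequalities — a descent inequality on $\E[F(\ol{\x}_k)]$, a consensus-error recursion on $\E\ln\x_k-\J\x_k\rn^2$, and a gradient-tracking-error recursion on $\E\ln\y_k-\J\y_k\rn^2$ — and then sum over $k\in\{0,\ldots,K-1\}$, choosing $\a$ small enough to close the coupling. For the descent inequality, double stochasticity of $\ul{\mb{W}}$ gives $\ol{\x}_{k+1}=\ol{\x}_k-\a\,\ol{\y}_{k+1}$, while the standard gradient-tracking invariant $\ol{\y}_{k+1}=\ol{\g}_k$ (a direct induction from \eqref{y} and the initialization $\y_0=\mb{0}$, $\g_{-1}=\mb{0}$) combined with Assumption~\ref{o} yields $\E[\ol{\y}_{k+1}\mid\F_k]=\ol{\nf}_k$ and $\E[\ln\ol{\g}_k-\ol{\nf}_k\rn^2\mid\F_k]\le\nu_a^2/n$. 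Applying $L$-smoothness of $F$, taking conditional expectation, and using Young's inequality together with $\ln\ol{\nf}_k-\nabla F(\ol{\x}_k)\rn^2\le\tfrac{L^2}{n}\ln\x_k-\J\x_k\rn^2$ (from $L$-smoothness of each $f_i$) produces a descent inequality roughly of the form $\E[F(\ol{\x}_{k+1})]\le \E[F(\ol{\x}_k)]-\tfrac{\a}{2}\E\ln\nabla F(\ol{\x}_k)\rn^2+\tfrac{\a L^2}{n}\E\ln\x_k-\J\x_k\rn^2+\tfrac{\a^2 L\nu_a^2}{2n}$.

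For the consensus error, using $\x_{k+1}-\J\x_{k+1}=(\W-\J)(\x_k-\J\x_k)-\a(\W-\J)(\y_{k+1}-\J\y_{k+1})$, $\|\W-\J\|=\lambda$ and Young's inequality gives $\E\ln\x_{k+1}-\J\x_{k+1}\rn^2\le\tfrac{1+\lambda^2}{2}\E\ln\x_k-\J\x_k\rn^2+C_1\tfrac{\a^2\lambda^2}{1-\lambda^2}\E\ln\y_{k+1}-\J\y_{k+1}\rn^2$. The tracking recursion is the more subtle piece and is what Section~\ref{sec_GT_general_bounds} provides: starting from $\y_{k+1}-\J\y_{k+1}=(\W-\J)(\y_k-\J\y_k+\gk-\gkl)$, I split $\gk-\gkl=(\nf_k-\nf_{k-1})+(\gk-\nf_k)-(\gkl-\nf_{k-1})$ and exploit the conditional unbiasedness together with the across-time, across-agent independence in Assumption~\ref{o} so that the noise cross-terms vanish in expectation and the noise contribution is only $\mc{O}(\lambda^2\nu_a^2)$, rather than the $\mc{O}(\lambda^2 n\nu_a^2)$ produced by a coarse $\ln\gk-\nf_k\rn^2\le n\nu_a^2$ bound. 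This factor-$n$ improvement is precisely what removes the spurious $n$-dependence appearing in the steady-state error of~\cite{GNSD,DSGT_KY}. Bounding $\ln\nf_k-\nf_{k-1}\rn^2\le L^2\|\x_k-\x_{k-1}\|^2$ and expanding $\x_k-\x_{k-1}$ through \eqref{x} re-expresses the $L^2$ term in $\|\x_{k-1}-\J\x_{k-1}\|^2$, $\|\y_k-\J\y_k\|^2$, and (via $\ol{\y}_k=\ol{\g}_{k-1}$) $\|\nabla F(\ol{\x}_{k-1})\|^2+\nu_a^2$, closing the recursion in quantities already tracked.

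Finally — and this is the main obstacle — I sum the three inequalities from $k=0$ to $K-1$, weight them by constants of order $1$, $\tfrac{L^2}{1-\lambda^2}$, and $\tfrac{\a^2 L^2}{(1-\lambda^2)^2}$ respectively, and use the step-size bound $\a\le\min\{1,\tfrac{1-\lambda^2}{12\lambda},\tfrac{(1-\lambda^2)^2}{4\sqrt{6}\lambda^2}\}\tfrac{1}{2L}$ to ensure that the cumulative consensus and tracking sums appearing on the right strictly absorb into their telescoped left-hand counterparts. The surviving residuals collapse into the four stated contributions: $\tfrac{4(F(\ol{\x}_0)-F^*)}{\a K}$ from the descent telescoping, $\tfrac{2\a\nu_a^2 L}{n}$ from the averaged noise, $\tfrac{448\a^2 L^2\lambda^2\nu_a^2}{(1-\lambda^2)^3}$ from the tracking-noise term (the cube arising from two geometric-closure factors of $\tfrac{1}{1-\lambda^2}$ together with one Young constant), and the transient $\tfrac{64\a^2 L^2\lambda^4}{n(1-\lambda^2)^3 K}\|\nf_0\|^2$ from the initialization $\y_0=\mb{0}$, $\g_{-1}=\mb{0}$. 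Converting the resulting bound on $\tfrac{1}{K}\sum_k\E\ln\nabla F(\ol{\x}_k)\rn^2$ to the stated per-node quantity $\tfrac{1}{nK}\sum_{i,k}\E\ln\nabla F(\x_k^i)\rn^2$ uses $\ln\nabla F(\x_k^i)\rn^2\le 2\ln\nabla F(\ol{\x}_k)\rn^2+2L^2\|\x_k^i-\ol{\x}_k\|^2$ and recycles the cumulative consensus bound established en route; the steady-state assertion then follows by taking $\limsup_{K\to\infty}$ and dropping the two $\mc{O}(1/K)$ residuals.
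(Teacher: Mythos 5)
Your proposal follows essentially the same route as the paper: the descent inequality of Lemma~\ref{ds}, the consensus recursion of Lemma~\ref{cons}, and the tracking recursion of Lemmas~\ref{gt}--\ref{GT_final} (including the key use of unbiasedness and cross-agent independence, as in Lemma~\ref{gt2}, to avoid a lossy Young's inequality on the noise cross-term), combined by summing over $k$ and absorbing the cumulative consensus and tracking sums under the stated step-size restriction. The only difference is organizational: the paper packages the two error recursions as a $2\times2$ LTI system and bounds the cumulative consensus error via $(\I_2-\G)^{-1}$ (Proposition~\ref{LTI_ncvx} and Lemma~\ref{consensus_acc}), which is equivalent to your weighted summation of the coupled inequalities.
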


Theorem~\ref{conv_ncvx} is proved in Section~\ref{S_ncvx}. 
\vspace{-0.1cm}
\begin{rmk}[\textbf{Transient and steady-state performance}]
\normalfont
Theorem~\ref{conv_ncvx} explicitly characterizes the non-asymptotic performance of \textbf{\texttt{GT-DSGD}} for general smooth non-convex functions with an appropriate constant step-size. In particular, the stationary gap of \textbf{\texttt{GT-DSGD}} for any finite number of iterations~$K$ is bounded by the sum of four terms. The first two terms are independent of the network spectral gap~$1-\lambda$ and match the complexity of the centralized minibatch \textbf{\texttt{SGD}} up to constant factors~\cite{OPT_ML}. The third and the fourth terms depend on~$1-\lambda$ reflecting the decentralized network and are in the order of~$\mc{O}(\a^2)$. This is a much tighter characterization compared with the existing results~\cite{GNSD,DSGT_KY} on \textbf{\texttt{GT-DSGD}} and leads to provably faster non-asymptotic rate, see Remark~\ref{TRT_ncvx_r} below. Theorem~\ref{conv_ncvx} also shows that as ${K\ra\infty}$, the stationary gap of~\DSGT~decays sublinearly at the rate of~$\mc{O}(1/K)$ up to a steady-state error. It can be observed that if~${\a = \mc{O}\big(\frac{(1-\lambda)^3}{\lambda^2nL}\big)}$, then the steady state stationary gap of \textbf{\texttt{GT-DSGD}} matches that of the centralized minibatch \textbf{\texttt{SGD}} up to constant factors. The existing analysis~\cite{DSGT_KY}, however, suggests that under the same choice of the step-size~$\a$, the steady state stationary gap of \textbf{\texttt{GT-DSGD}} is strictly worse than the centralized minibatch~\textbf{\texttt{SGD}}.    
\end{rmk}

The following corollary of Theorem~\ref{conv_ncvx} is concerned with the non-asymptotic convergence rate of~\DSGT~over a finite time horizon for general smooth non-convex functions. 
\begin{cor}\label{TRT_ncvx}
Let Assumptions~\ref{f},~\ref{net}, and~\ref{o} hold and suppose that~${\ln\nf_0\rn^2 = \mc{O}(n)}$. Setting~${\a = \sqrt{n/K}}$ in Theorem~\ref{conv_ncvx}, for ${K\geq 4nL^2\max\left\{1,\frac{144\lambda^{2}}{(1-\lambda^2)^2},\frac{96\lambda^4}{(1-\lambda^2)^4}\right\}}$, we obtain:
\begin{align*}
&\frac{1}{n}\sum_{i=1}^n\frac{1}{K}\sum_{k=0}^{K-1}\E\left[\ln\nabla F(\x_k^i)\rn^2\right] 
\leq\underbrace{\frac{4(F(\ol{\x}_{0}) - F^*)}{\sqrt{nK}}  +\frac{2\nu^2_a L}{\sqrt{nK}}}_{\text{Centralized minibatch SGD}} \\
&\qquad\qquad\qquad\qquad+\underbrace{\frac{448n\lambda^2\nu_a^2L^2}{(1-\lambda^2)^3K} 
+\frac{64L^2\lambda^4\left\|\nf_0\right\|^2}{(1-\lambda^2)^3 K^2}}_{\text{Decentralized network effect}}.
\end{align*}
Thus, if~$K$ further satisfies that~${K\geq K_{nc}:=\mc{O}\left(\frac{n^3\lambda^4L^2}{(1-\lambda)^6}\right)}$, then we have
\begin{align*}
\frac{1}{n}\sum_{i=1}^n\frac{1}{K}\sum_{k=0}^{K-1}\E\left[\ln\nabla F(\x_k^i)\rn^2\right] 
=\mc{O}\left(\frac{\nu^2_a L}{\sqrt{nK}}\right).
\end{align*}
\end{cor}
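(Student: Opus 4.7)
The plan is to view Corollary~\ref{TRT_ncvx} as a direct algebraic consequence of Theorem~\ref{conv_ncvx}: the only work is to (i) check that the choice $\a = \sqrt{n/K}$ is admissible under the stated lower bound on $K$, (ii) substitute this $\a$ into the four-term non-asymptotic bound, and (iii) identify a transient time $K_{nc}$ past which the network-dependent terms become dominated by the centralized minibatch \SGD{} terms.

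First I would verify admissibility of the step-size. Theorem~\ref{conv_ncvx} requires
\[
\a \;\leq\; \frac{1}{2L}\min\left\{1,\ \tfrac{1-\lambda^2}{12\lambda},\ \tfrac{(1-\lambda^2)^2}{4\sqrt{6}\lambda^2}\right\}.
\]
Squaring and plugging in $\a^2 = n/K$, each of the three branches translates into a lower bound on $K$, namely $K\geq 4nL^2$, $K\geq 576nL^2\lambda^2/(1-\lambda^2)^2$, and $K\geq 384nL^2\lambda^4/(1-\lambda^2)^4$. Pulling out the common factor $4nL^2$ recovers exactly the hypothesis $K \geq 4nL^2\max\bigl\{1,\,144\lambda^2/(1-\lambda^2)^2,\,96\lambda^4/(1-\lambda^2)^4\bigr\}$ of the corollary, so the step-size condition of Theorem~\ref{conv_ncvx} is met.

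Next I would mechanically substitute $\a = \sqrt{n/K}$ into the four terms of the bound in Theorem~\ref{conv_ncvx}: the first term becomes $4(F(\ol{\x}_0)-F^*)/\sqrt{nK}$; the second becomes $2\nu_a^2 L/\sqrt{nK}$; using $\a^2 = n/K$, the third becomes $448nL^2\lambda^2\nu_a^2/[(1-\lambda^2)^3 K]$; and the fourth becomes $64L^2\lambda^4\|\nf_0\|^2/[(1-\lambda^2)^3 K^2]$. This yields the displayed non-asymptotic bound verbatim.

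Finally I would determine the transient time. The centralized \SGD{} block is $\Theta(1/\sqrt{nK})$, while the decentralized block consists of a $\Theta(n/K)$ term and, using the assumption $\|\nf_0\|^2 = \mc{O}(n)$, a $\Theta(n/K^2)$ term. Comparing the dominant network term $n/[(1-\lambda^2)^3 K]$ against $1/\sqrt{nK}$ gives the requirement $K \gtrsim n^3\lambda^4 L^2/(1-\lambda)^6$; the $\|\nf_0\|^2$ term yields a strictly weaker constraint in $K$, so the first comparison sets $K_{nc}$. Once $K\geq K_{nc}$, all network-effect terms are absorbed into $\mc{O}(\nu_a^2 L/\sqrt{nK})$, and since the initialization term $4(F(\ol{\x}_0)-F^*)/\sqrt{nK}$ is already $\mc{O}(1/\sqrt{nK})$, the final displayed rate follows. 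No step here is a real obstacle; the only bookkeeping care is to make sure the three branches of the step-size condition are each converted into lower bounds on $K$ with matching constants (the factor $4\sqrt{6}$ becoming $96$ after squaring is the one easy spot to slip).
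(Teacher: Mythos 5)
Your proposal is correct and is exactly the argument the paper intends: the corollary is stated as a direct substitution of $\a=\sqrt{n/K}$ into Theorem~\ref{conv_ncvx} (the paper gives no separate proof), and your translation of the three step-size branches into the lower bound on $K$, the term-by-term substitution, and the comparison yielding $K_{nc}=\mc{O}\big(n^3\lambda^4L^2/(1-\lambda)^6\big)$ all check out.
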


\begin{rmk}[\textbf{Non-asymptotic mean-squared rate and transient time for network independence}]\label{TRT_ncvx_r}
\normalfont
Corollary~\ref{TRT_ncvx} shows that if the number of iterations is large enough, i.e.,~${K\geq K_{nc}}$, by setting ${\a = \frac{\sqrt{n}}{\sqrt{K}}}$, the non-asymptotic rate of \textbf{\texttt{GT-DSGD}} matches that of the centralized minibatch \textbf{\texttt{SGD}} up to factors of universal constants. This discussion shows that, in the regime that~$K\geq K_{nc}$, \textbf{\texttt{GT-DSGD}} achieves a network-independent linear speedup compared with the centralized minibatch \textbf{\texttt{SGD}} that processes all data at a single node. In other words, the number of stochastic gradient computations required to achieve an approximate stationary point is reduced by a factor of~$1/n$ at each node in the network. These results significantly improve the existing convergence guarantees of \textbf{\texttt{GT-DSGD}} for general smooth non-convex {functions~\cite{GNSD,DSGT_KY}}. In particular, references~\cite{GNSD,DSGT_KY} show that if~${\a = \frac{c_0}{\sqrt{K}}}$, where~$K$ is large enough and~$c_0$ is some positive constant, \textbf{\texttt{GT-DSGD}} achieves the convergence rate of~$\frac{c_1}{\sqrt{K}}$, where~$c_1$ is a function of the network spectral gap~$(1-\lambda)$. The convergence results in~\cite{GNSD,DSGT_KY} thus suggest that the rate of \textbf{\texttt{GT-DSGD}} is always network-dependent and is strictly worse than that of the centralized minibatch \textbf{\texttt{SGD}} and hence fail to characterize the network-independent performance of~\DSGT.
\end{rmk}

\vspace{-0.3cm}
{\color{black}\begin{rmk}[\textbf{Comparison with \textbf{\texttt{DSGD}}}]\label{GT_versus_DSGD}\normalfont
We observe from Corollary~\ref{TRT_ncvx} that the convergence of \textbf{\texttt{GT-DSGD}} is robust to the difference between the local and the global functions. In other words, \textbf{\texttt{GT-DSGD}} outperforms \textbf{\texttt{DSGD}} when data distributions across the nodes are significantly heterogeneous, since the convergence rate of the latter explicitly depends on a factor that measures the heterogeneity between the local and the global functions~\cite{DSGD_NIPS}.   
However, the transient time for \textbf{\texttt{GT-DSGD}} to achieve network independent performance has a network dependence of~$\mc{O}((1-\lambda)^{-6})$ which is worse than that of \textbf{\texttt{DSGD}} where the dependence is~$\mc{O}((1-\lambda)^{-4})$. Moreover, we note that \textbf{\texttt{GT-DSGD}} requires two consecutive rounds of communication per node per iteration to update the state and the gradient tracker variables respectively, compared to~$\DSGD$.


\end{rmk}}

\vspace{-0.4cm}
\subsection{Smooth non-convex functions under PL condition}\label{sec_main_PL}
In this subsection, we discuss the performance of \textbf{\texttt{GT-DSGD}} when the global objective function~$F$ further satisfies the PL condition. We begin with the case of constant step-size.
\begin{theorem}\label{conv_PL}
Let Assumption~\ref{f},~\ref{net},~\ref{o} and~\ref{PL} hold. If the step-size~$\a_k = \a,\forall k\geq0$, satisfies that
\begin{align*}
0< \a\leq\ol{\a}:=\min\left\{\frac{1}{2L},\frac{(1-\lambda^2)^2}{42\lambda^2 L}, \frac{1-\lambda^2}{24\lambda L\kappa^{1/4}},\frac{1-\lambda^2}{2\mu}\right\},   
\end{align*}
{\color{black}then~$\E[\ln\x_k-\J\x_k\rn^2]$ and~$\E[F(\ol{\x}_k) - F^*]$ decay linearly at the rate of~$\mc{O}((1-\mu\a)^k)$ up to a steady-state error such that}
\begin{align*}
&\limsup_{k\ra\infty}\E\bigg[\frac{\ln\x_k-\J\x_k\rn^2}{n}\bigg] \leq\frac{288\lambda^4\a^5L^3\kappa\nu_a^2}{n(1-\lambda^2)^4} + \frac{144\lambda^2\a^2\nu_a^2}{(1-\lambda^2)^3}, \n\\
&\limsup_{k\ra\infty}\E\left[F(\ol{\x}_k) - F^*\right] \leq\frac{3\a\kappa\nu_a^2}{2n} + \frac{72\lambda^2\a^2\kappa L\nu_a^2}{(1-\lambda^2)^3}.
\end{align*}
{\color{black}Moreover,~$\frac{1}{n}\sum_{i=1}^n\E\left[F({\x}_k^i) - F^*\right]$ decays linearly at the rate of~$\mc{O}((1-\mu\a)^k)$ up to a steady-state error such that}
\begin{align*}
&\limsup_{k\ra\infty}\frac{1}{n}\sum_{i=1}^n\E\left[F({\x}_k^i) - F^*\right] \\
&\qquad\qquad\qquad= \underbrace{\mc{O}\left(\frac{\a\kappa\nu_a^2}{n}\right)}_{\text{Centralized minibatch SGD}} + 
\underbrace{\mc{O}\left(\frac{\lambda^2\a^2\kappa L\nu_a^2}{(1-\lambda)^3}\right)}_{\text{Decentralized network effect}}.
\end{align*}
\end{theorem}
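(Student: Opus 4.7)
The plan is to prove all three statements by coupling a PL-driven descent inequality for the averaged iterate $\ol{\x}_k$ with a companion consensus recursion obtained from Section~\ref{sec_GT_general_bounds}. First I would exploit the identity $\ol{\x}_{k+1} = \ol{\x}_k - \a\ol{\g}_k$, which follows because $\mb{1}^\top\ul{\mb W} = \mb{1}^\top$ and the telescoping structure of the tracker forces $\ol{\y}_{k+1} = \ol{\g}_k$ (with $\ol{\g}_{-1}=0$, $\ol{\y}_0=0$). Applying $L$-smoothness of $F$ at $\ol{\x}_k$, taking conditional expectation given $\F_k$, and using Assumption~\ref{o} to handle $\ol{\g}_k - \ol{\nf}_k$, then controlling the cross term $\langle \nabla F(\ol{\x}_k), \ol{\nf}_k - \nabla F(\ol{\x}_k)\rangle$ by $L$-smoothness and Young's inequality, yields a one-step inequality of the form
\[
\E[F(\ol{\x}_{k+1}) - F^* \mid \F_k] \leq F(\ol{\x}_k) - F^* - \tfrac{\a}{2}\|\nabla F(\ol{\x}_k)\|^2 + \tfrac{\a L^2}{2n}\|\x_k - \J\x_k\|^2 + \tfrac{\a^2 L \nu_a^2}{2n}.
\]
The PL condition then converts $-\tfrac{\a}{2}\|\nabla F(\ol{\x}_k)\|^2$ into the contraction $-\a\mu(F(\ol{\x}_k)-F^*)$, giving the recursion
\[
\E[F(\ol{\x}_{k+1}) - F^*] \leq (1-\a\mu)\,\E[F(\ol{\x}_k) - F^*] + \tfrac{\a L^2}{2n}\,\E[\|\x_k - \J\x_k\|^2] + \tfrac{\a^2 L \nu_a^2}{2n}.
\]

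Next I would invoke the consensus and tracker recursions from Section~\ref{sec_GT_general_bounds} together with the uniform bound on $\E[\|\y_k - \J\y_k\|^2]$ from Lemma~\ref{Y_bounded} to derive a companion contraction
\[
\E[\|\x_{k+1} - \J\x_{k+1}\|^2] \leq (1-\a\mu)\,\E[\|\x_k - \J\x_k\|^2] + (\text{higher-order network-noise terms}),
\]
where the step-size conditions $\a\leq(1-\lambda^2)/(24\lambda L\kappa^{1/4})$ and $\a\leq(1-\lambda^2)/(2\mu)$ are precisely what is needed to pull the geometric factor $\lambda^2$ below the target rate $1-\a\mu$ after absorbing the $\a^2$ coupling with $\|\y - \J\y\|^2$. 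The two inequalities then combine into a single Lyapunov function $V_k := \E[F(\ol{\x}_k) - F^*] + c\,\E[\|\x_k - \J\x_k\|^2]/n$ with a constant $c = \Theta\bigl(L/(1-\lambda^2)\bigr)$, chosen so that $V_{k+1} \leq (1-\a\mu)V_k + \Psi$, with $\Psi$ collecting the steady-state terms. Iterating and summing the geometric series produces the linear-convergence statements for $\E[\|\x_k - \J\x_k\|^2]$ and $\E[F(\ol{\x}_k) - F^*]$, and reading off the coefficients of $\Psi$ matches the displayed limsup expressions (the $\kappa^{1/4}$ factor in $\ol{\a}$ is what keeps the tracker-induced $\a^5$ term quantitatively dominated).

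Finally, for the pointwise average $\tfrac{1}{n}\sum_i \E[F(\x_k^i) - F^*]$, I would anchor at $\ol{\x}_k$ using $L$-smoothness,
\[
F(\x_k^i) \leq F(\ol{\x}_k) + \langle \nabla F(\ol{\x}_k), \x_k^i - \ol{\x}_k\rangle + \tfrac{L}{2}\|\x_k^i - \ol{\x}_k\|^2,
\]
average over $i$ (the linear term vanishes since $\sum_i(\x_k^i - \ol{\x}_k)=0$), and substitute the two limsups already obtained. The centralized-SGD term $\mc{O}(\a\kappa\nu_a^2/n)$ comes from the $\E[F(\ol{\x}_k)-F^*]$ piece and the decentralized term $\mc{O}(\lambda^2\a^2\kappa L\nu_a^2/(1-\lambda)^3)$ from $\tfrac{L}{2n}\E[\|\x_k - \J\x_k\|^2]$; the linear rate for the pointwise average is inherited from the linear rates of its two components. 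The main obstacle will be calibrating the Lyapunov coefficient $c$ and the step-size thresholds simultaneously so that the per-step contraction is cleanly $(1-\a\mu)$ while the leading-order steady-state term remains $\mc{O}(\a\kappa\nu_a^2/n)$ without absorbing network-dependent multiplicative blowups, which is exactly what forces the intricate definition of $\ol{\a}$ and, in particular, the $\kappa^{1/4}$ factor.
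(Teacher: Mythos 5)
Your overall architecture (PL-driven descent for $\ol{\x}_k$, a companion consensus recursion, a weighted Lyapunov combination, and Lemma~\ref{F_ave}-style anchoring for the per-node statement) is in the right spirit, and your one-step descent inequality coincides with the paper's Lemma~\ref{ds_PL}. However, there is a genuine gap in how you handle the gradient-tracking error. You propose to close the consensus recursion by substituting the \emph{uniform} bound $\sup_k\E[\|\y_k-\J\y_k\|^2]\leq\wh{y}$ from Lemma~\ref{Y_bounded}. That constant $\wh{y}$ (see~\eqref{y_hat}) contains initial-condition terms, namely $n\ol{\a}^2L^3(F(\ol{\x}_0)-F^*)$ and $\lambda^2\|\nf_0\|^2$, in addition to the noise level $n\nu_a^2$. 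Feeding a \emph{non-decaying} bound into $\E[\|\x_{k+1}-\J\x_{k+1}\|^2]\leq\frac{1+\lambda^2}{2}\E[\|\x_k-\J\x_k\|^2]+\frac{2\a^2\lambda^2}{1-\lambda^2}\wh{y}$ and iterating yields a limsup of order $\frac{\a\lambda^2\wh{y}}{\mu(1-\lambda^2)n}$, which (i) retains $F(\ol{\x}_0)-F^*$ and $\|\nf_0\|^2$ in the steady state, whereas the theorem's limsup bounds depend only on $\nu_a^2$, and (ii) is weaker than the stated $\mc{O}(\a^2\nu_a^2\lambda^2/(1-\lambda^2)^3)$ by a factor of roughly $(1-\lambda^2)/(\mu\a)$, which then inflates the optimality-gap steady state from $\mc{O}(\a^2\kappa L\nu_a^2)$ to $\mc{O}(\a\kappa^2\nu_a^2)$. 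So your two-component Lyapunov function cannot reproduce the displayed constants or even their qualitative form.

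The paper avoids this by keeping the tracking error \emph{inside} the contracting system: Proposition~\ref{LTI} couples $\E[\|\x_k-\J\x_k\|^2]/n$, $\E[F(\ol{\x}_k)-F^*]/L$, and $\E[\|\y_{k+1}-\J\y_{k+1}\|^2]/(nL^2)$ into a $3\times3$ recursion $\v_{k+1}\leq\H\v_k+\u$, shows $\rho(\H)\leq 1-\mu\a/2$ via a positive test vector (Lemmas~\ref{rho_bound} and~\ref{rho_3}, which is where the $\kappa^{1/4}$ threshold actually arises), and then reads the steady state off $(\I_3-\H)^{-1}\u$, so the initial conditions live entirely in the geometrically vanishing term $\H^k\v_0$. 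To repair your argument you would need to promote the tracking error to a third Lyapunov coordinate with its own contraction (essentially Lemma~\ref{GT_final_PL}) rather than freezing it at $\wh{y}$; note also that Lemma~\ref{Y_bounded} is itself proved from that same $3\times3$ recursion, so it offers no genuine shortcut here. Your final step for the per-node average is fine, and in fact your observation that $\sum_i(\x_k^i-\ol{\x}_k)=\mb{0}$ kills the linear term gives a slightly tighter constant than the paper's Lemma~\ref{F_ave}.
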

Theorem~\ref{conv_PL} is proved in Section~\ref{S_PL_ms_cst}. 

\vspace{-0.15cm}
\begin{rmk}[\textbf{Transient and steady-state performance}]
\normalfont
Theorem~\ref{conv_PL} shows that when the global objective function~$F$ satisfies the PL condition and the constant step-size~$\a$ is less than~$\ol{\a}$,
the optimality gap of \textbf{\texttt{GT-DSGD}} decays linearly up to a steady-state error that is the sum of two terms. The first term is independent of the network and matches that of the centralized minibatch \textbf{\texttt{SGD}} up to constant factors, while the second term is due to the network and is controlled by~$\mc{O}(\a^2)$. In contrast to~\cite{MP_Pu}, which requires a stronger assumption that the global objective function is strongly convex, we note that our stability range of the step-size $\a$ is larger by a factor of~$\mc{O}(\kappa^{5/12})$; this relaxed upper bound on~$\a$ further leads to a faster linear convergence when exact gradients are available, see Remark~\ref{fullgradient}. Next, it can be verified from Theorem~\ref{conv_PL} that to match the steady-state error performance of the centralized minibatch \textbf{\texttt{SGD}} (up to constant factors), it suffices to choose the step-size~$\a$ in~\textbf{\texttt{GT-DSGD}} such that~${\a=\mc{O}\big(\frac{(1-\lambda)^3}{\lambda^2nL}\big)}$, which is larger by a factor of~$\mc{O}(\kappa)$ than the corresponding result in~\cite{MP_Pu}; in other words, Theorem~\ref{conv_PL} demonstrates a tighter and faster convergence rate to achieve the same steady-state error.
\end{rmk}

\vspace{-0.35cm}
\begin{rmk}[\textbf{Global linear convergence under exact gradient oracle}]\label{fullgradient}
\normalfont
Theorem~\ref{conv_PL} further shows that when the exact gradient oracle is available at each node, i.e.,~${\nu_i^2=0, \forall i\in\mc{V}}$, \textbf{\texttt{GT-DSGD}} reduces to its deterministic counterpart~\cite{harnessing,DIGing,AugDGM} and achieves global linear convergence to an optimal solution with an appropriate constant step-size. In other words, when~$\a = \ol{\a}$, it achieves an~$q$-accurate optimal solution in $\mc{O}\big(\max\big\{\kappa,\frac{\lambda^2\kappa}{(1-\lambda)^2},\frac{\lambda\kappa^{5/4}}{1-\lambda},\frac{1}{1-\lambda}\big\}\log\frac{1}{q}\big)$ iterations. This result improves upon the state-of-the-art gradient computation and communication complexity under the PL condition~\cite{ZO_GT}.
The gradient computation complexity can be further improved to $\mc{O}\big(\kappa\log\frac{1}{\epsilon}\big)$ by performing $\mc{O}\big(\frac{1}{1-\lambda}\log\frac{\kappa}{1-\lambda}\big)$ rounds of consensus communication at each iteration. This gradient computation complexity result matches the state-of-the-art~\cite{NIDS} on decentralized exact gradient methods (without Nesterov acceleration), which further requires a stronger assumption that each local function is convex and the global function is strongly convex. In contrast, we only require the PL condition on the global objective~$F$.
\end{rmk}
\vspace{-0.1cm}
We now proceed to the case of decaying step-sizes. The next result shows the sample path-wise performance of \textbf{\texttt{GT-DSGD}} under a family of stochastic approximation step-sizes~\cite{SAbook_AMS}, i.e.,~$\a_k>0,$~${\sum_{k=0}^{\infty}\a_k = \infty}$, and~${\sum_{k=0}^{\infty}\a_k^2 < \infty}$, which enables the exact sublinear convergence in contrast to the inexact linear convergence under a constant step-size.
\begin{theorem}\label{PL_as}
Let Assumptions~\ref{f},~\ref{net},~\ref{o}, and~\ref{PL} hold. Consider the step-size sequence~$\{\a_k\}$ such that~$\a_k = \delta(k+\vp)^{-\epsilon},\forall k\geq0$, where~${\epsilon\in(0.5,1]}$,~${\delta\geq1/\mu}$, and $\vp\geq \max\big\{(\delta/\ol{\a})^{1/\epsilon},\frac{4}{1-\lambda^2}\big\}$ for~$\ol{\a}$ given in Theorem~\ref{conv_PL}. Then~$\forall i,j\in\mc{V}$ and for arbitrarily small $\epsilon_1>0$, we have:
\begin{align*}
&\P\left(\sum_{k=0}^{\infty}k^{2\epsilon-1-\epsilon_1}\big\|\x_k^i - \x_k^j\big\|^2 < \infty \right) = 1, \n\\
&\P\Big(\lim_{k\ra\infty}k^{2\epsilon-1-\epsilon_1}\big(F(\x_k^i) - F^*\big) = 0 \Big) = 1.   
\end{align*}
\end{theorem}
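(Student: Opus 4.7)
The plan is to build on the descent inequalities developed for Theorems~\ref{conv_ncvx} and~\ref{conv_PL}, now carried out with time-varying step-sizes, and to conclude via two applications of the Robbins-Siegmund almost supermartingale convergence theorem~\cite{RS_theorem}: first to a polynomially weighted consensus error, and then to a polynomially weighted optimality gap. The summability claim and the vanishing-rate claim come out of these two applications respectively.

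The crucial preliminary step is the uniform a.s.\ boundedness $\sup_k\|\y_k-\J\y_k\|^2<\infty$ from Lemma~\ref{Y_bounded}, which turns the stochastic forcing in the consensus recursion into a deterministic multiple of~$\a_k^2$ along every sample path and thereby makes a pathwise (not merely in-expectation) Robbins-Siegmund argument viable. With $C_k:=\|\x_k-\J\x_k\|^2/n$ and the weight $w_k:=(k+\vp)^{2\epsilon-1-\epsilon_1}$, the consensus-error analysis behind Theorem~\ref{conv_ncvx} yields an a.s.\ bound of the form
\[
C_{k+1}\leq\tfrac{1+\lambda^2}{2}\,C_k + B_1\,\a_k^2,
\]
which multiplied by $w_{k+1}$ gives a Robbins-Siegmund inequality for $w_kC_k$, since the geometric contraction $(1+\lambda^2)/2$ dominates the polynomial growth $w_{k+1}/w_k$ for all large $k$, while $w_{k+1}\a_k^2=O(k^{-1-\epsilon_1})$ is deterministically summable. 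Robbins-Siegmund then yields $\sum_k w_kC_k<\infty$ a.s., and the triangle inequality $\|\x_k^i-\x_k^j\|^2\leq 2n\,C_k$ transfers this to the first claim.

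For the second claim, combining the standard descent lemma with the PL condition yields, for $V_k:=F(\ol{\x}_k)-F^*$,
\[
\E[V_{k+1}\mid\F_k]\leq(1-\mu\a_k)\,V_k + A_1\,\a_k C_k + A_2\,\a_k^2.
\]
The algebraic heart of the rate argument is to verify $(w_{k+1}/w_k)(1-\mu\a_k)\leq 1-\eta_k$ with $\sum_k\eta_k=\infty$: for $0.5<\epsilon<1$ the $O(k^{-\epsilon})$ contraction from $\mu\a_k$ automatically dominates the $O(k^{-1})$ growth of $w_k$; the boundary case $\epsilon=1$ is precisely where the requirement $\delta\geq 1/\mu$ kicks in, since both factors then scale like $1/k$ and only $\mu\delta\geq 1>1-\epsilon_1$ keeps $\eta_k$ positive. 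The cross term $w_{k+1}\a_k C_k\leq 2\a_0\, w_k C_k$ is pathwise summable by the first claim, and $w_{k+1}\a_k^2=O(k^{-1-\epsilon_1})$ is deterministically summable. A second application of Robbins-Siegmund then gives $w_kV_k\to V_\infty$ a.s.\ with $\sum_k\eta_k\, w_kV_k<\infty$, and since $\sum_k\eta_k=\infty$, the finite limit must be $V_\infty=0$. The node-wise rate follows from $F(\x_k^i)-F^*\leq V_k+\|\nabla F(\ol{\x}_k)\|\,\|\x_k^i-\ol{\x}_k\|+(L/2)\|\x_k^i-\ol{\x}_k\|^2$ together with the PL implication $\|\nabla F(\ol{\x}_k)\|^2\leq 2LV_k$ and the already-established a.s.\ rate of $w_kC_k$.

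The main technical obstacle will be the \emph{pathwise} summability of the stochastic forcing terms in both Robbins-Siegmund applications; this is precisely where Lemma~\ref{Y_bounded} is indispensable, because without an a.s.\ bound on the gradient-tracker deviation the noise could only be absorbed in expectation, which would be insufficient to apply the almost supermartingale theorem along sample paths.
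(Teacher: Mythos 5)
Your overall architecture matches the paper's: a two-stage argument that first controls a polynomially weighted consensus error and then feeds it into a Robbins--Siegmund application for the weighted optimality gap $Q_k=(k+\vp)^{\tau}L^{-1}(F(\ol{\x}_k)-F^*)$ with $\tau=2\epsilon-1-\epsilon_1$, concluding $Q_k\to0$ a.s.\ from the divergence of $\sum_k\eta_k$ together with $\sum_k\eta_k Q_k<\infty$. Your second stage, including the role of $\delta\geq1/\mu$ at $\epsilon=1$ and the node-wise transfer via the expansion underlying Lemma~\ref{F_ave}, is essentially the paper's proof.

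There is, however, a genuine gap in your first stage, and it is exactly the step you single out as indispensable. Lemma~\ref{Y_bounded} asserts $\sup_{k}\E\big[\|\y_k-\J\y_k\|^2\big]\leq\wh{y}$ --- a uniform bound on \emph{expectations}, not the almost-sure uniform bound $\sup_k\|\y_k-\J\y_k\|^2<\infty$ that you invoke. A uniform bound in expectation does not imply a.s.\ uniform boundedness, and under Assumption~\ref{o} the gradient noise need not have bounded support, so the tracker deviation is genuinely not a.s.\ bounded. Consequently the pathwise recursion $C_{k+1}\leq\frac{1+\lambda^2}{2}C_k+B_1\a_k^2$ is not available, and your first Robbins--Siegmund application fails as written. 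The paper's route is the opposite of what your closing paragraph claims would be insufficient: take expectations in the consensus recursion of Lemma~\ref{cons}, use Lemma~\ref{Y_bounded} there to obtain a \emph{deterministic} weighted recursion for $\E[R_k]$, sum it to get $\sum_k\E[R_k]<\infty$, and then invoke the monotone convergence theorem for the nonnegative series to conclude $\P\left(\sum_k R_k<\infty\right)=1$. That pathwise summability is precisely what serves as the $\F_k$-measurable, a.s.\ summable forcing term in the single genuine almost-supermartingale argument for $Q_k$. The theorem is therefore still reachable, but the mechanism is ``control in expectation plus monotone convergence,'' not an a.s.\ bound on the gradient tracker; you need to correct both the premise and the remark that in-expectation absorption of the noise would not suffice.
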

Theorem~\ref{PL_as} is proved in Section~\ref{S_PL_as}.

\begin{rmk}[\textbf{Global sublinear rate on almost every sample path}]
\normalfont
Theorem~\ref{PL_as} guarantees that \textbf{\texttt{GT-DSGD}} exhibits a global sublinear convergence on almost every sample path, under decaying step-sizes, when the global function~$F$ satisfies the PL condition. This result is of significant practical value in that it is applicable to every instantiation of the algorithm while the expectation type convergence only characterizes, roughly speaking, the performance on average. Furthermore, in the case of general non-degenerate variances (see Assumption~\ref{o}), these path-wise rates are order-optimal, in the sense of polynomial time decay; this follows by considering the stochastic approximation reformulation of the optimization problem (i.e., the problem of obtaining zeros of the gradient function~$\nabla F(\x)$) and invoking standard central limit type arguments, see~\cite{SAbook_AMS}.)
To the best of our knowledge, Theorem~\ref{PL_as} is the first to show path-wise convergence for online decentralized stochastic optimization under non-convexity, thus generalizing prior results in the decentralized stochastic approximation and optimization literature, such as~\cite{GLE_kar}, where such analysis is performed under assumptions of local convexity. 
\end{rmk}

Finally, we consider the convergence rate of \textbf{\texttt{GT-DSGD}} in expectation when~$\a_k = \mc{O}(1/k),\forall k\geq0$. 
\begin{theorem}\label{F_ave_rate}
Let Assumptions~\ref{f},~\ref{net},~\ref{o}, and~\ref{PL} hold. Consider the step-size sequence~$\{\a_k\}$ such that~$\a_k = \beta(k+\gamma)^{-1}$,${\forall k\geq0}$, where~${\beta>2/\mu}$, and~${\gamma \geq \max\big\{\frac{\beta}{\ol{\a}},\frac{8}{1-\lambda^2}\big\}}$ for~$\ol{\a}$ given in Theorem~\ref{conv_PL}.
We have:~$\forall k\geq0$,
\begin{align*}
\frac{1}{n}\sum_{i=1}^n\E\big[F(\x_k^i) -& F^*\big] 
\leq\underbrace{\frac{2L\nu_a^2\beta^2}{n(\mu\beta-1)(k+\gamma)}}_{\text{Centralized minibatch SGD}} 
\\
&+ \underbrace{\frac{2\left(F(\ol{\x}_0) - F^*\right)}{(k/\gamma+1)^{\mu\beta}} +\frac{3L^2\wh{x}\beta^3}{n(\mu\beta-2)(k+\gamma)^{2}}}_{\text{Decentralized network effect}}, 
\end{align*}
where~$\wh{x}$ is a positive constant given in~\eqref{induction_cons}.
\end{theorem}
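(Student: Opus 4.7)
The plan is to reduce the statement to a scalar recursion for $\E[F(\ol{\x}_k) - F^*]$ coupled with an $\mc{O}(\a_k^2)$ bound on the consensus error, then apply a standard time-varying step-size lemma. First, by $L$-smoothness of $F$ and the identity $\sum_{i}(\x_k^i - \ol{\x}_k) = \mb{0}_p$, I have
\begin{align*}
\frac{1}{n}\sum_{i=1}^n\E[F(\x_k^i) - F^*] \leq \E[F(\ol{\x}_k) - F^*] + \frac{L}{2n}\E\big[\ln\x_k - \J\x_k\rn^2\big],
\end{align*}
so it suffices to control the right-hand side. Next, using the average update $\ol{\x}_{k+1} = \ol{\x}_k - \a_k\ol{\g}_k$ (which follows from $\ol{\y}_{k+1} = \ol{\g}_k$, a consequence of gradient tracking and double stochasticity of $\ul{\mb{W}}$), $L$-smoothness of $F$, conditional unbiasedness from Assumption~\ref{o}, and the PL condition from Assumption~\ref{PL}, I would obtain a descent recursion of the form
\begin{align*}
\E[F(\ol{\x}_{k+1}) - F^*] \leq (1 - \mu\a_k)\E[F(\ol{\x}_k) - F^*] + \frac{c_1 \a_k L^2}{n}\E\big[\ln\x_k - \J\x_k\rn^2\big] + \frac{c_2 \a_k^2 L\nu_a^2}{n},
\end{align*}
essentially the PL descent already exploited in Section~\ref{S_PL_ms_cst}, but now with time-varying $\a_k$.

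To close the system I would establish, by induction on $k$, the consensus bound
\begin{align*}
\E\big[\ln\x_k - \J\x_k\rn^2\big]\leq n\wh{x}\,\a_k^2,\qquad\forall k\geq 0,
\end{align*}
with the explicit constant $\wh{x}$ appearing in~\eqref{induction_cons}. The induction relies on the stochastic gradient tracking bounds of Section~\ref{sec_GT_general_bounds} together with the uniform tracking-error estimate of Lemma~\ref{Y_bounded}. The requirement $\gamma\geq 8/(1-\lambda^2)$ is exactly what makes this work: it ensures the contractive constant $\lambda^2$ dominates the step-size-dependent perturbations at every iteration, and that the smoothly varying ratios $\a_{k}/\a_{k-1}$ stay close enough to $1$ for the inductive step to propagate.

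Substituting the consensus bound into the descent recursion yields a scalar inequality $v_{k+1}\leq(1-\mu\a_k)v_k + A\a_k^2 + B\a_k^3$ with $v_k:=\E[F(\ol{\x}_k) - F^*]$, $A\propto L\nu_a^2/n$, and $B\propto L^2\wh{x}/n$. With $\a_k = \beta/(k+\gamma)$ and $\mu\beta>2$, I apply a standard time-varying step-size lemma~\cite{SAbook_AMS} separately on the two perturbation scales. The $\a_k^2$ term, after summation against the product $\prod_{j}(1-\mu\a_j)$, yields the centralized-SGD bound of order $A\beta^2/[(\mu\beta-1)(k+\gamma)]$. The $\a_k^3$ term, by an analogous calculation (now requiring the stronger condition $\mu\beta>2$), yields the network term of order $B\beta^3/[(\mu\beta-2)(k+\gamma)^2]$. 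The initial optimality gap contributes the transient $\prod_{j=0}^{k-1}(1-\mu\a_j)\leq(\gamma/(k+\gamma))^{\mu\beta}=(k/\gamma+1)^{-\mu\beta}$. Combining these three pieces with the reduction above (the extra $L\E[\ln\x_k - \J\x_k\rn^2]/(2n)$ being absorbed into the $(k+\gamma)^{-2}$ network term via the consensus bound) gives the claim.

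The main obstacle is the consensus induction $\E[\ln\x_k - \J\x_k\rn^2]\leq n\wh{x}\a_k^2$, since the consensus recursion is itself driven by the gradient tracker norm while the tracker recursion is driven by stochastic gradients at the non-averaged iterates, so the two must be handled jointly. Once this $\mc{O}(\a_k^2)$ consensus bound is in hand the remaining calculation is routine, though care is needed in bookkeeping the constants so that only two smallness conditions ($\mu\beta>1$ and $\mu\beta>2$) appear in the denominators rather than a single $\mu\beta-1$, reflecting the different time-decay rates of the SGD-noise and the network-induced perturbations.
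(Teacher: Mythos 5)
Your proposal follows essentially the same route as the paper's proof: the uniform gradient-tracking-error bound (Lemma~\ref{Y_bounded}) turns the consensus recursion into a scalar one, an induction yields $\E\big[\ln\x_k-\J\x_k\rn^2\big]=\mc{O}(\a_k^2)$ with the constant $\wh{x}$ of~\eqref{induction_cons}, and the PL descent recursion with $\a_k^2$ and $\a_k^3$ perturbations is unrolled via the product bound of Lemma~\ref{sa_ss} and integral estimates, requiring $\mu\beta>1$ and $\mu\beta>2$ respectively, exactly as in Section~\ref{S_PL_ms_decay}. The only (immaterial) difference is your per-node reduction, which exploits the vanishing cross term $\sum_{i}(\x_k^i-\ol{\x}_k)=\mb{0}_p$ and is in fact slightly tighter than the paper's Lemma~\ref{F_ave}.
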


The non-asymptotic rate in Theorem~\ref{F_ave_rate} shows that \textbf{\texttt{GT-DSGD}} asymptotically achieves network independent~$\mc{O}(1/k)$ rate in mean when the global objective function~$F$ satisfies the PL condition, matching the~$\Omega(1/k)$ oracle lower bound~\cite{OPT_ML}. The following corollary examines the number of transient iterations required to achieve network-independence under specific choices of parameter~$\beta$ and~$\gamma$ in Theorem~\ref{F_ave_rate}.

\begin{cor}\label{TRT}
Let Assumptions~\ref{f},~\ref{net},~\ref{o}, and~\ref{PL} hold. Set~${\beta = 6/\mu}$ and ${\gamma = \max\big\{\frac{6}{\mu\ol{\a}},\frac{8}{1-\lambda^2}\big\}}$ in Theorem~\ref{F_ave_rate} and suppose that $\ln\nf_0\rn^2 = \mc{O}(n)$. Then we have:
\begin{align*}
\frac{1}{n}\sum_{i=1}^n\E\left[F(\x_k^i) - F^*\right] 
=&~\mc{O}\left(\frac{\kappa^{2}\left(F(\ol{\x}_0) - F^*\right)}{k^{2}}
+\frac{\kappa\nu_a^2}{n\mu k}\right),
\end{align*}
if~$k$ is large enough such that~$k\gtrsim K_{\text{PL}}$, where
\begin{align*}
K_{\text{PL}} 
:=&~\frac{\lambda^2n\kappa}{(1-\lambda)^3} 
+ \frac{\lambda\kappa^{5/4}}{1-\lambda}
+ \kappa
+ \frac{\lambda^{3/2}\kappa^{11/8}}{(1-\lambda)^{3/2}}
+ \frac{\kappa^{-1/2}}{(1-\lambda)^{3/2}} \n\\
&+ \frac{\lambda^2n\kappa^{1/2}L(F(\ol{\x}_0)-F^*)}{(1-\lambda)^2\nu_a^2}.
\end{align*}
\end{cor}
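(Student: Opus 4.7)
The plan is to substitute $\beta = 6/\mu$ and the prescribed $\gamma$ directly into Theorem~\ref{F_ave_rate} and then determine the range of $k$ for which the initial-condition and network-effect terms are dominated by the target centralized rate $\tfrac{\kappa^2(F(\ol{\x}_0)-F^*)}{k^2} + \tfrac{\kappa\nu_a^2}{n\mu k}$. With $\mu\beta = 6$ the constants collapse: $\mu\beta-1=5$, $\mu\beta-2=4$, $L\beta^2 = 36\kappa/\mu$, and $L^2\beta^3 = 216\kappa^2/\mu$, so the bound of Theorem~\ref{F_ave_rate} reads, up to universal constants,
\begin{align*}
\frac{1}{n}\sum_{i=1}^n\E\big[F(\x_k^i) - F^*\big] \lesssim \frac{\kappa\nu_a^2}{n\mu(k+\gamma)} + \frac{F(\ol{\x}_0) - F^*}{(k/\gamma+1)^{6}} + \frac{\kappa^2\wh{x}}{n\mu(k+\gamma)^2}.
\end{align*}
The first term is already of the target order once $k\gtrsim \gamma$, so every summand of $K_{\text{PL}}$ must absorb at least $\gamma$.

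Next I would unpack $\gamma = \max\{6/(\mu\ol{\a}),\,8/(1-\lambda^2)\}$ using $\ol{\a}$ from Theorem~\ref{conv_PL}, obtaining
\begin{align*}
\gamma \asymp \kappa + \tfrac{\lambda^2\kappa}{(1-\lambda)^2} + \tfrac{\lambda\kappa^{5/4}}{1-\lambda} + \tfrac{1}{1-\lambda}.
\end{align*}
For the initial-condition term, using $(k/\gamma+1)^6 \geq (k/\gamma)^6$ when $k\geq\gamma$, the desired dominance by $\kappa^2/k^2$ reduces to $k \gtrsim \gamma^{3/2}/\kappa^{1/2}$. Raising each component of $\gamma$ to the $3/2$ power and dividing by $\kappa^{1/2}$ yields the contributions $\kappa$, $\lambda^{3/2}\kappa^{11/8}/(1-\lambda)^{3/2}$, and $\kappa^{-1/2}/(1-\lambda)^{3/2}$, matching three of the summands of $K_{\text{PL}}$; the remaining $\lambda^3\kappa/(1-\lambda)^3$ contribution coming from $\lambda^2\kappa/(1-\lambda)^2$ is subsumed by the $\lambda^2 n\kappa/(1-\lambda)^3$ threshold obtained next.

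The chief obstacle is the third term, whose analysis hinges on the explicit form of $\wh{x}$ from~\eqref{induction_cons}. I expect $\wh{x}$ to split into a variance piece of order $\lambda^2 n\nu_a^2/(1-\lambda)^3$, inherited from the steady-state gradient-tracking bound developed in Section~\ref{sec_GT_general_bounds} and used in Theorem~\ref{conv_PL}, plus an initial-condition piece proportional to $\|\nf_0\|^2$ and $F(\ol{\x}_0)-F^*$ weighted by appropriate powers of $\lambda/(1-\lambda)$. Enforcing that the variance piece is dominated by $\kappa\nu_a^2/(n\mu k)$ gives $k \gtrsim \lambda^2 n\kappa/(1-\lambda)^3$, accounting for the fifth summand of $K_{\text{PL}}$. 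For the initial-condition piece, I would invoke the assumption $\|\nf_0\|^2 = \mc{O}(n)$ together with $L$-smoothness and the PL property to relate it to $F(\ol{\x}_0)-F^*$, and enforce dominance by the centralized term $\kappa\nu_a^2/(n\mu k)$, producing the last summand $\lambda^2 n\kappa^{1/2}L(F(\ol{\x}_0)-F^*)/((1-\lambda)^2\nu_a^2)$. Taking the maximum of all thresholds reconstructs $K_{\text{PL}}$; aside from carefully tracking the coefficients hidden in $\wh{x}$, the remainder is routine algebraic bookkeeping.
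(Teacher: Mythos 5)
Your proposal is correct and follows essentially the same route as the paper: substitute $\mu\beta=6$, require the $(k+\gamma)^{-2}$ network term to be dominated by the $(k+\gamma)^{-1}$ centralized term (which forces $k\gtrsim\kappa\wh{x}/\nu_a^2$ and, via the explicit form $\wh{x}=8\lambda^2\wh{y}(1-\lambda^2)^{-2}$, yields the first, second, and last summands of $K_{\text{PL}}$), and require $(k/\gamma+1)^{6}\gtrsim k^{2}/\kappa^{2}$, i.e.\ $k\gtrsim\gamma^{3/2}\kappa^{-1/2}$, which yields the remaining summands exactly as you compute. The only minor bookkeeping difference is that the paper obtains the $\lambda\kappa^{5/4}/(1-\lambda)$ summand from the middle (variance) term $\lambda\kappa^{1/4}\nu_a^2/(1-\lambda)$ of $\wh{x}$ rather than from absorbing $\gamma$, but both accountings land on the same threshold.
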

Theorem~\ref{F_ave_rate} and Corollary~\ref{TRT} are proved in Section~\ref{S_PL_ms_decay}.

\begin{figure*}
\centering
\includegraphics[width=2.2in]{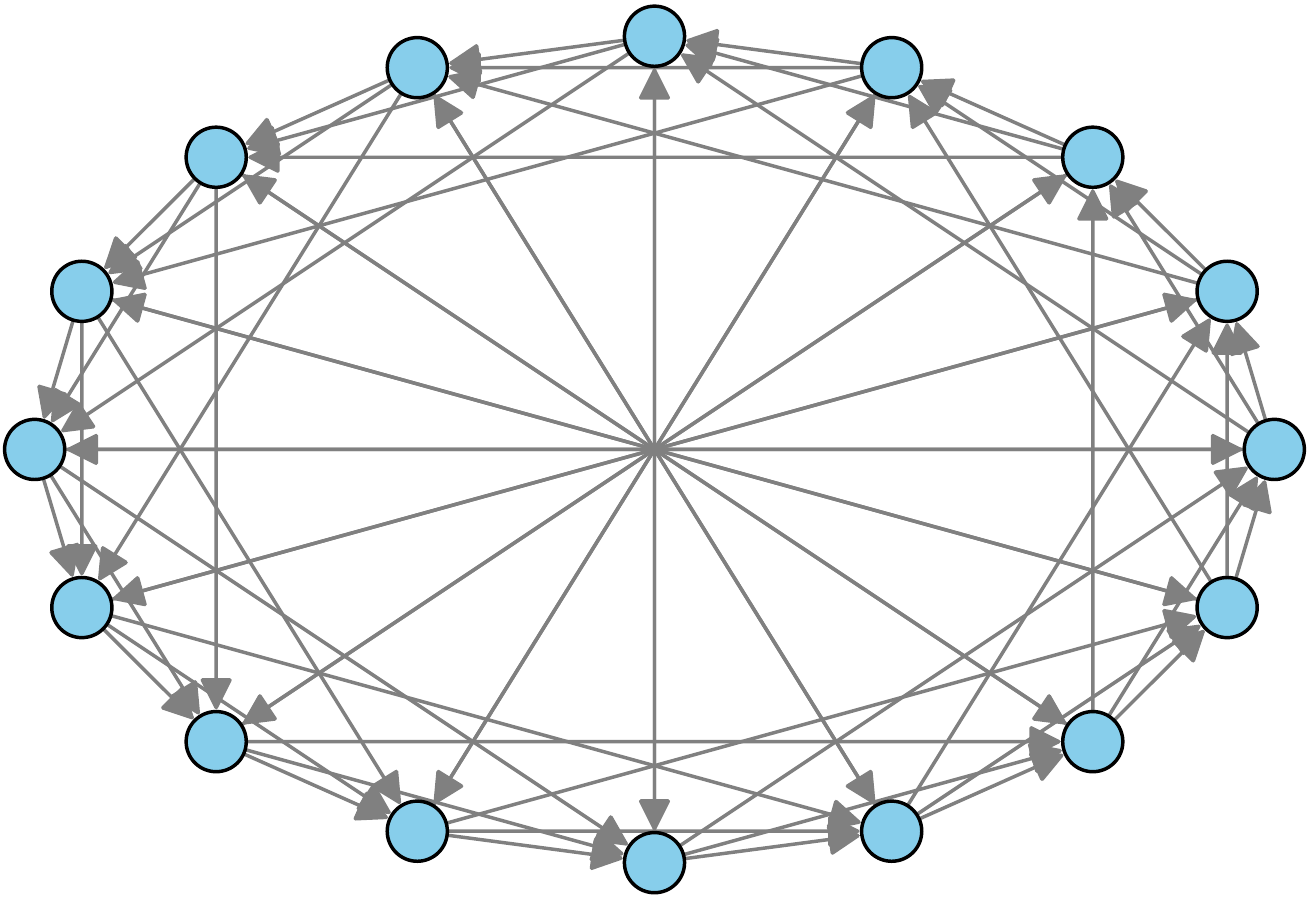}\quad~~
\includegraphics[width=2in]{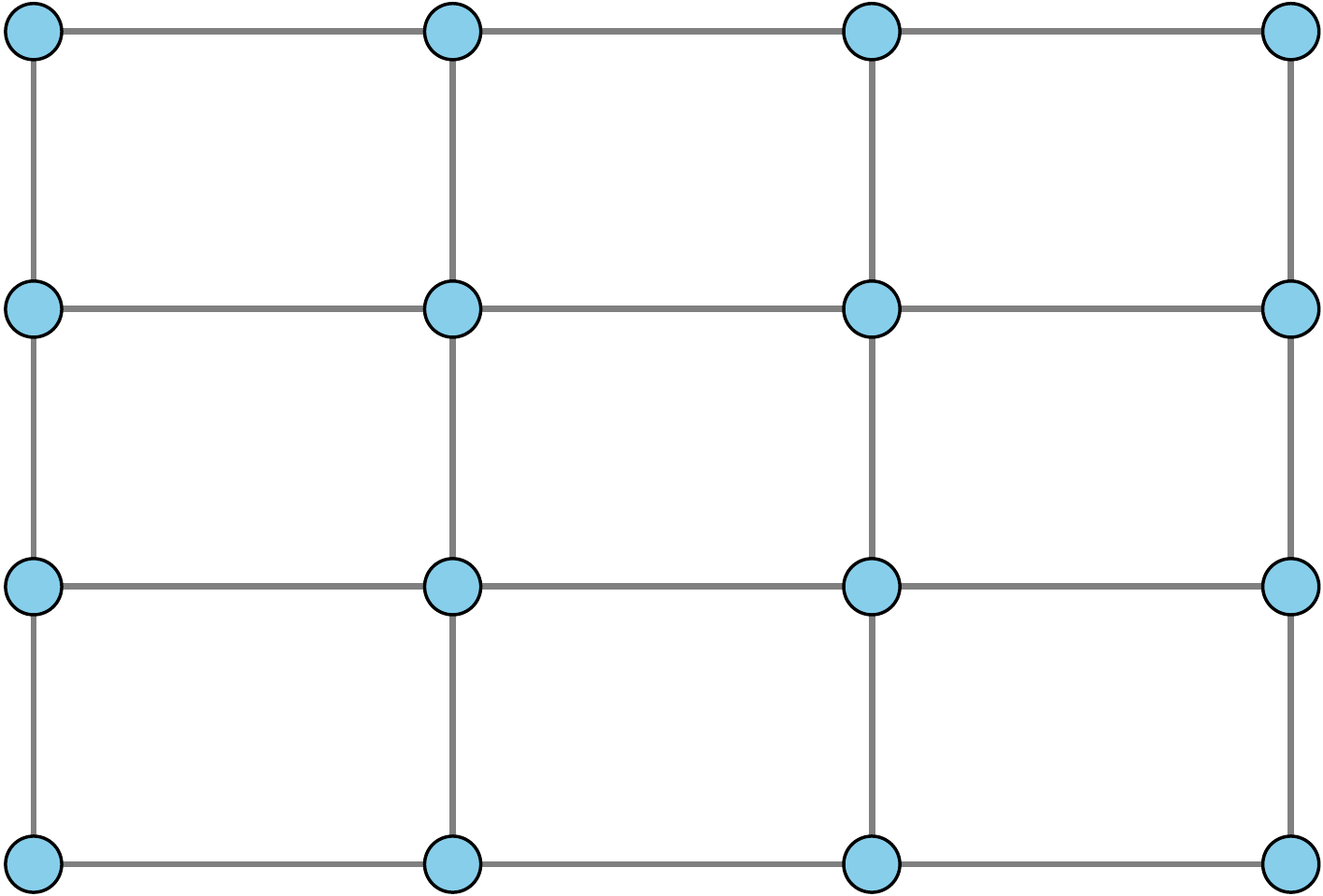}\quad~~
\includegraphics[width=2.2in]{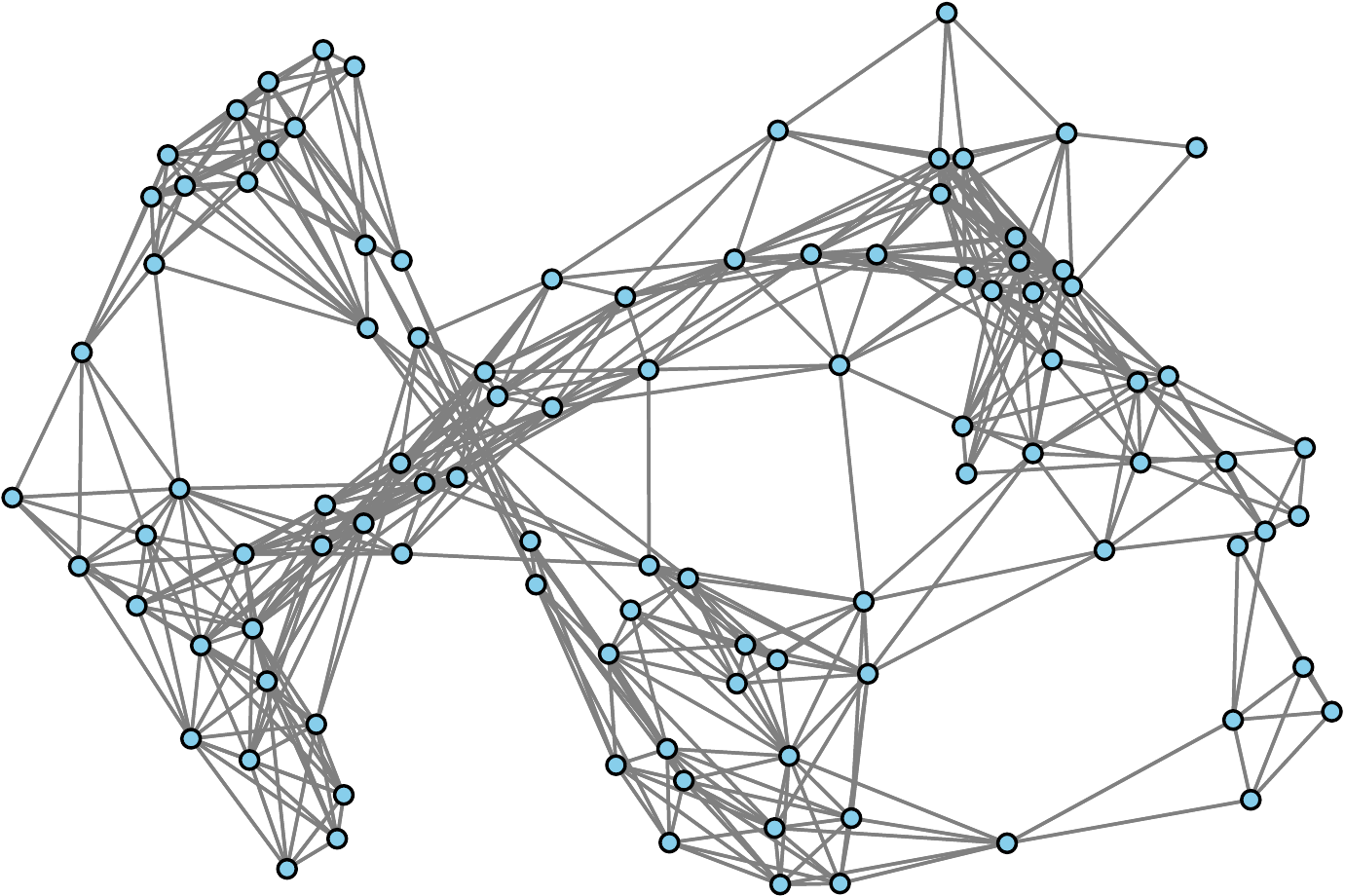}
\caption{A directed exponential graph with~$16$ nodes, an undirected grid graph with~$16$ nodes, and an undirected geometric graph with~$100$ nodes.}
\label{networks_sample}
\end{figure*}

\begin{figure*}
\centering
\includegraphics[width=2.3in]{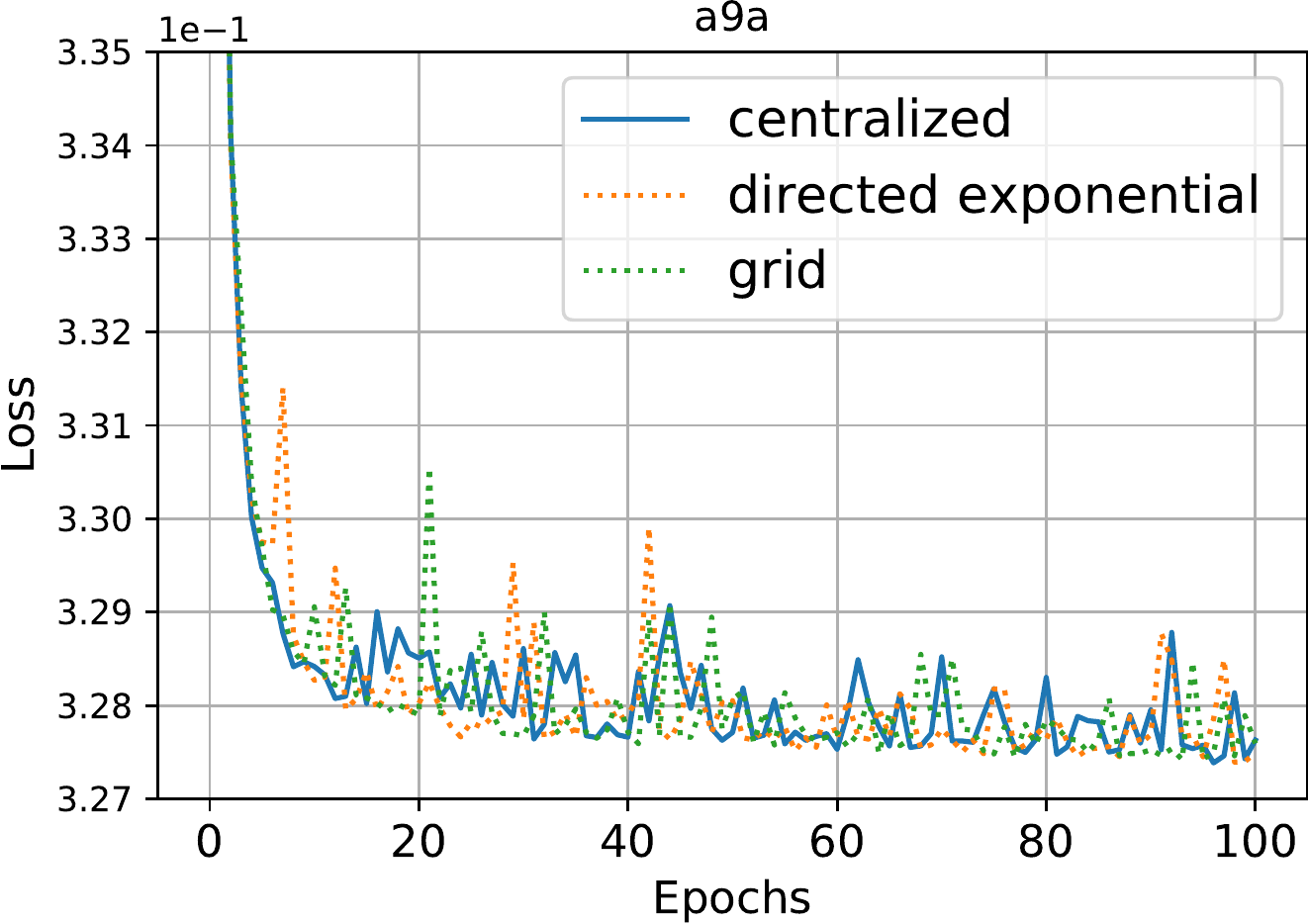}~
\includegraphics[width=2.3in]{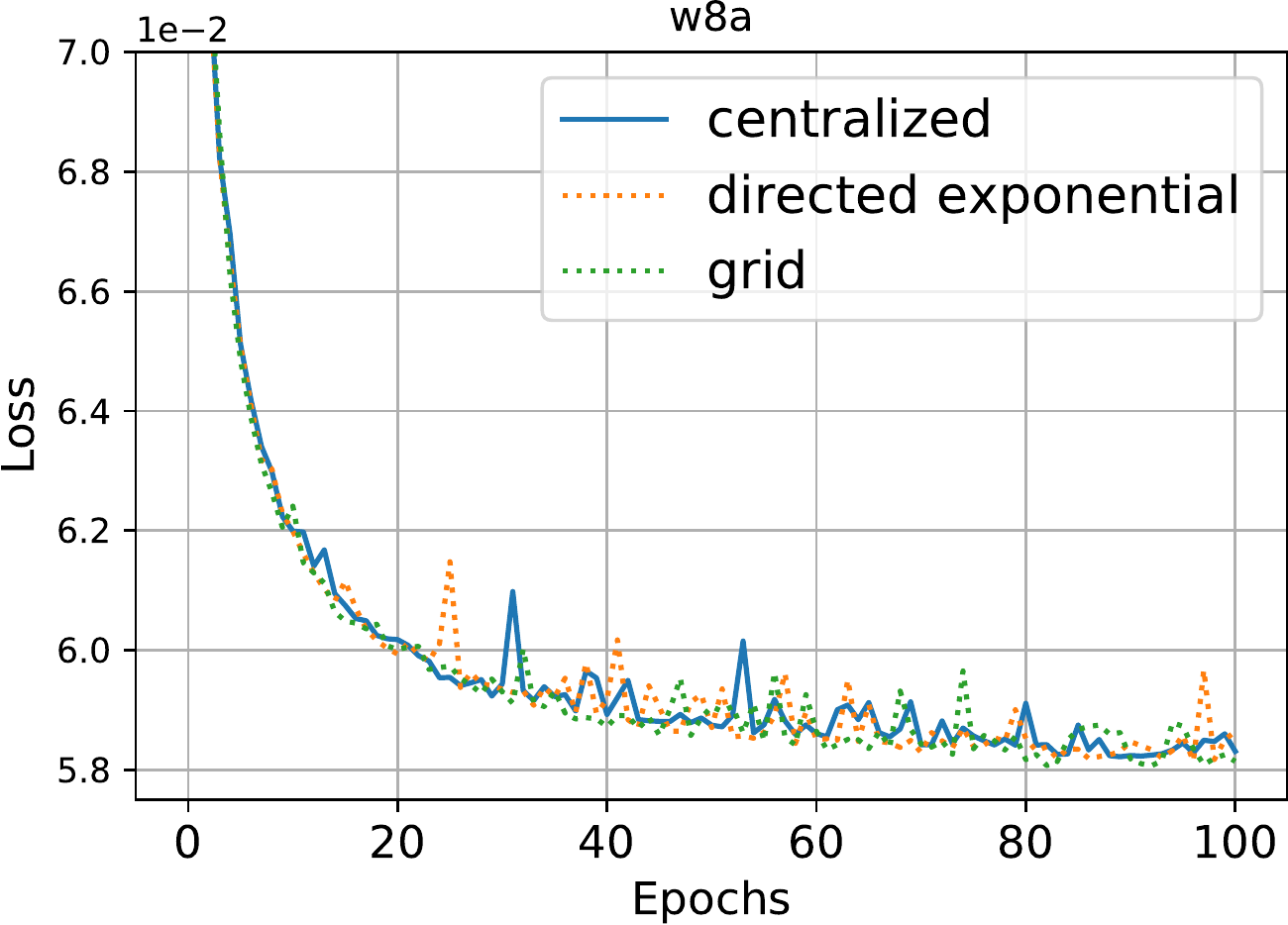}~
\includegraphics[width=2.3in]{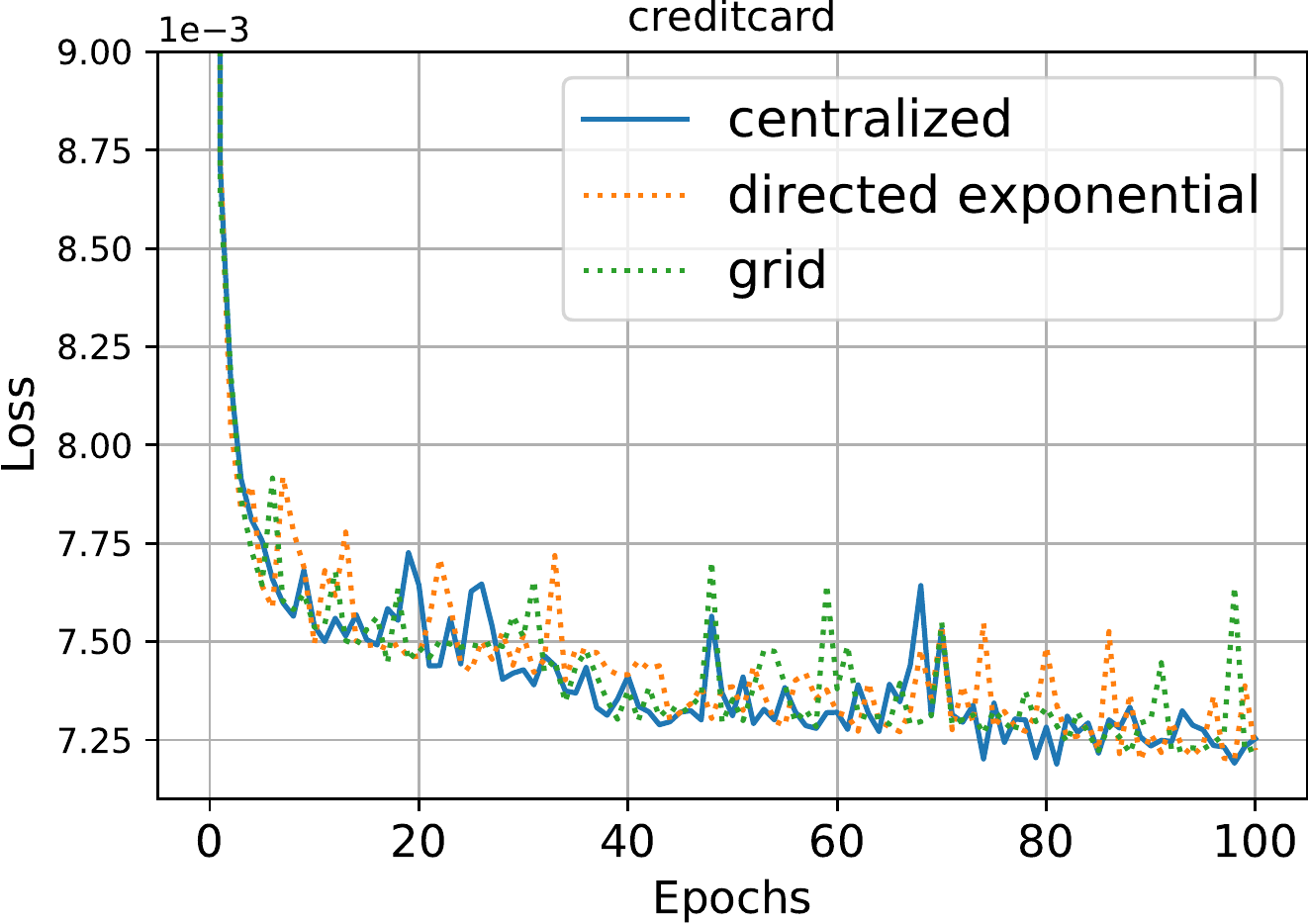}
\caption{The performance of \textbf{\texttt{GT-DSGD}} for non-convex logistic regression over different graphs and comparison with the centralized minibatch \textbf{\texttt{SGD}} on the a9a, w8a and creditcard datasets.}
\label{lr_loss}
\end{figure*}

\begin{figure*}
\centering
\includegraphics[width=2.3in]{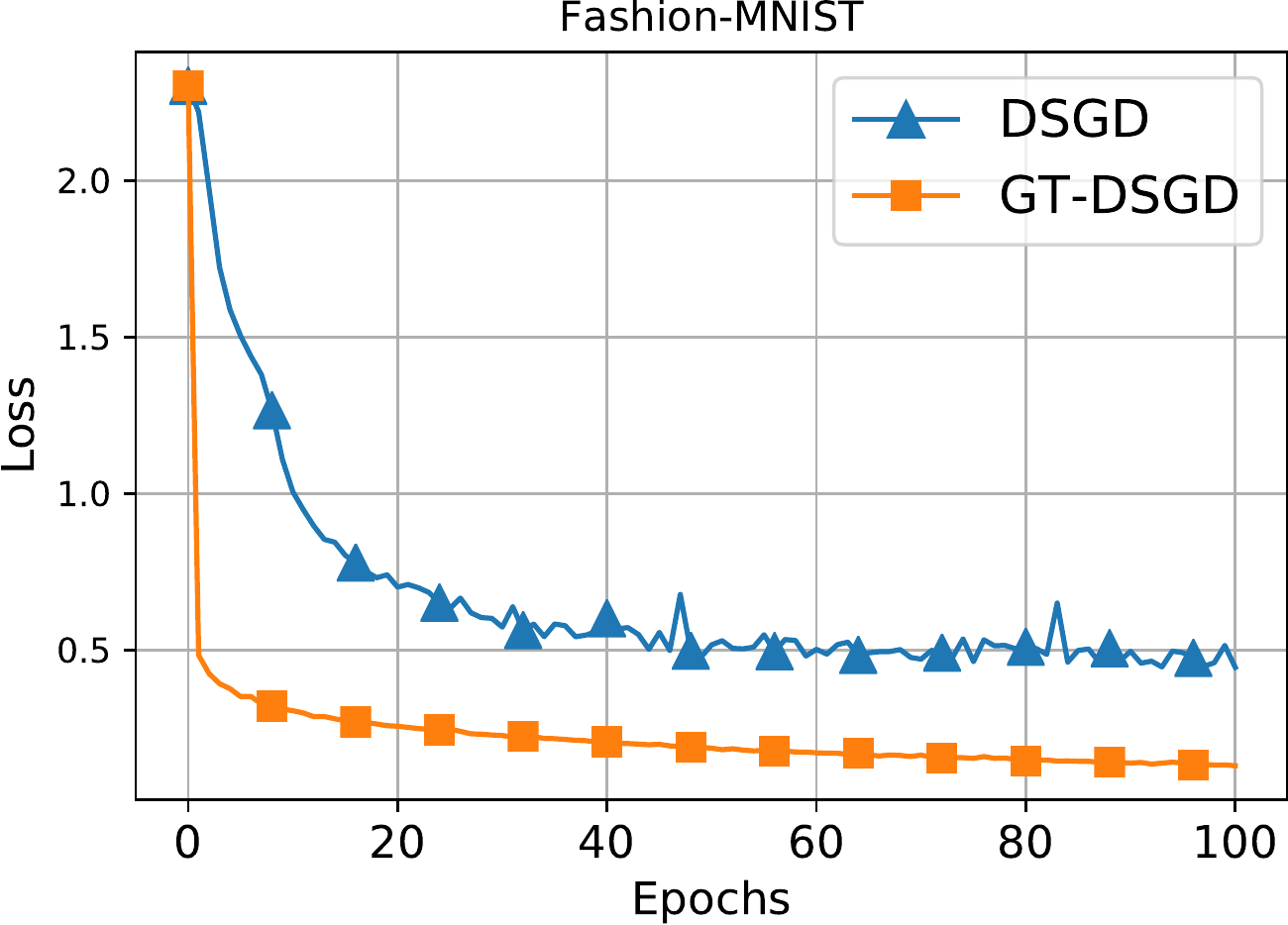}~
\includegraphics[width=2.3in]{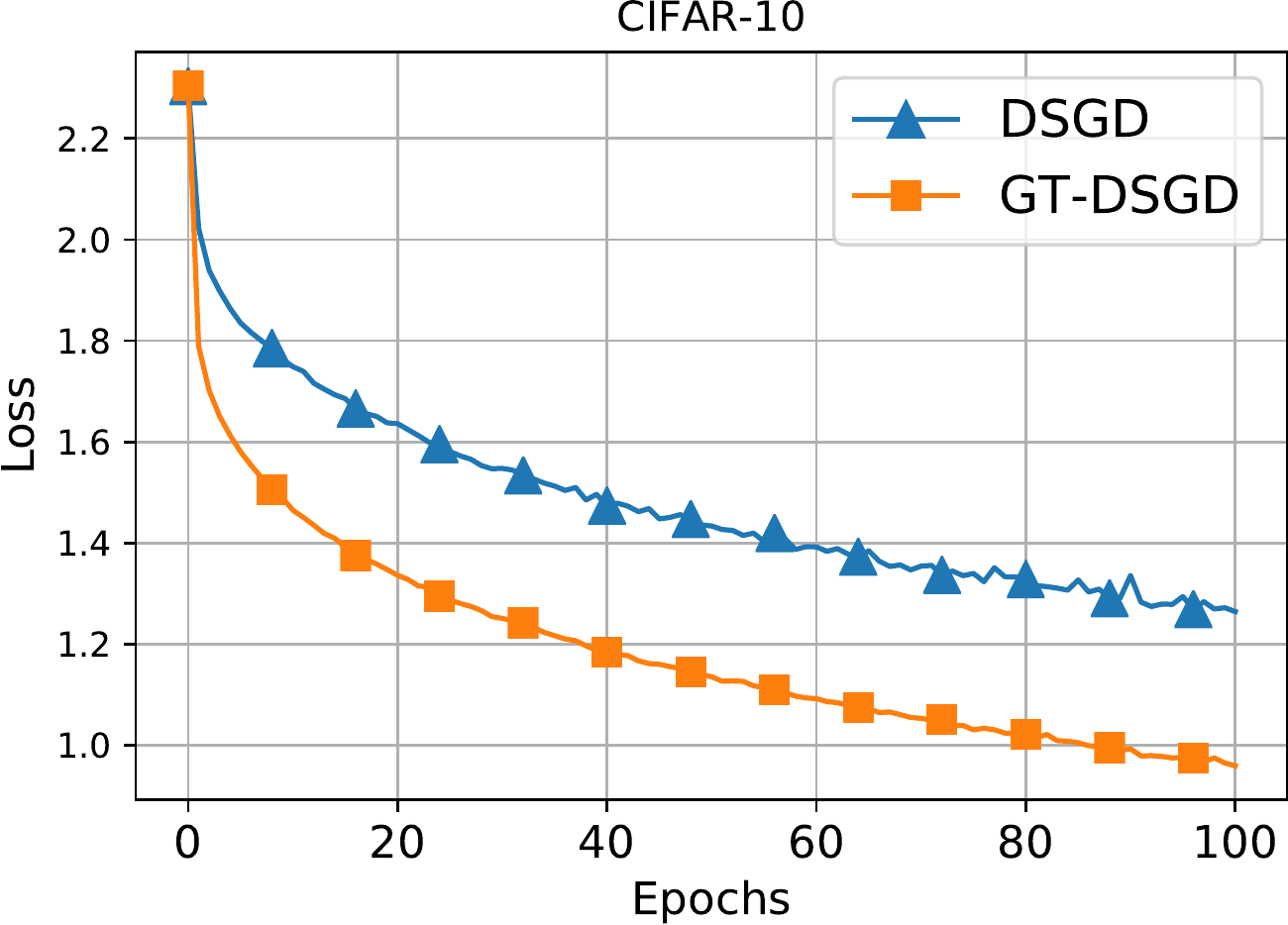}~
\includegraphics[width=2.3in]{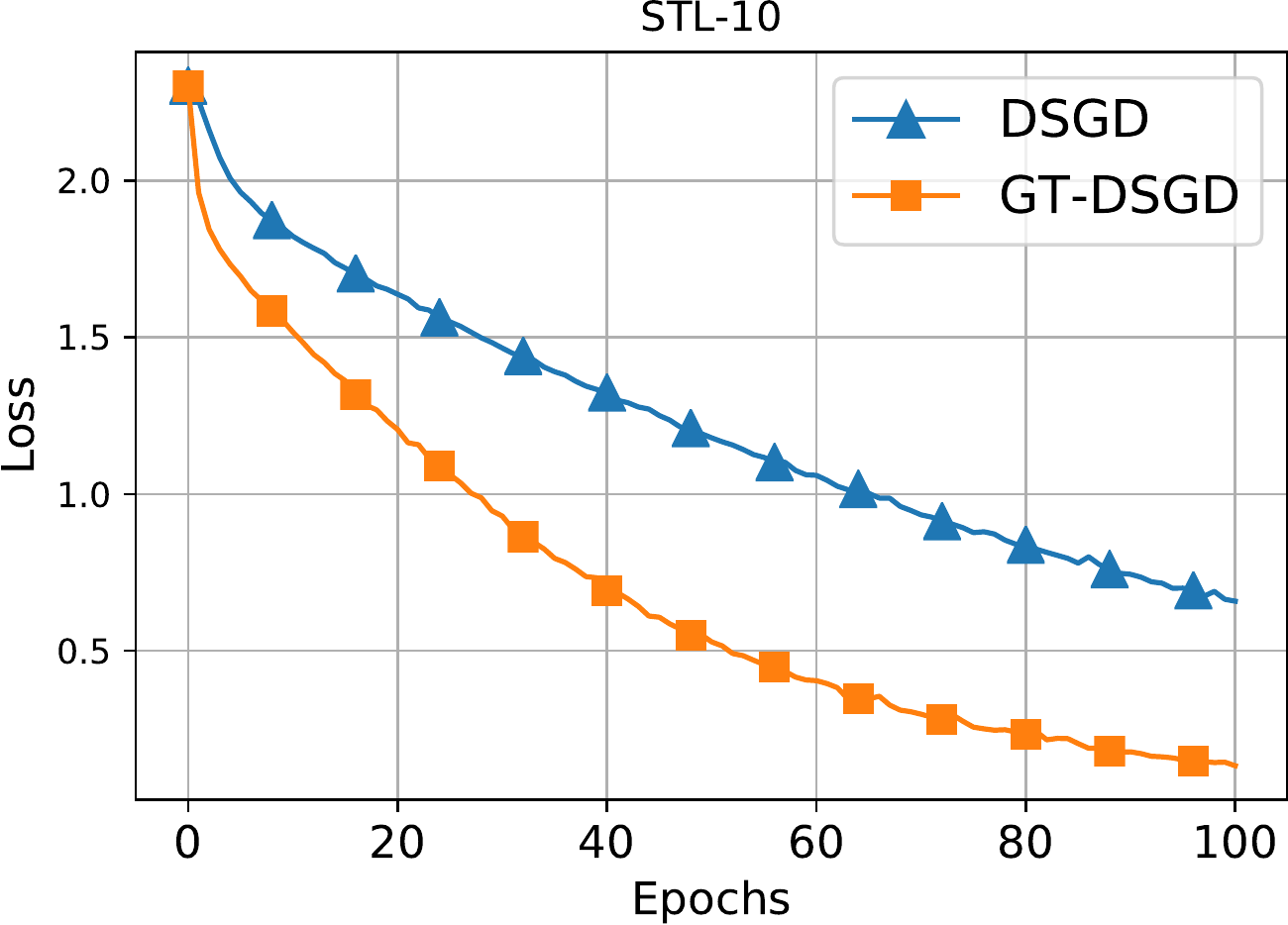}
\caption{Performance comparison between \textbf{\texttt{GT-DSGD}} and \textbf{\texttt{DSGD}} for one-hidden-layer neural network under heterogeneous data distributions across the nodes on the Fashion-MNIST, CIFAR-10 and STL-10 datasets.}
\vspace{-0.2cm}
\label{neural_net_loss}
\end{figure*}

\begin{rmk}[\textbf{Transient time for network independent rate}]
\normalfont
Corollary~\ref{TRT} shows after~$K_{\text{PL}}$ iterations, the convergence rate of \textbf{\texttt{GT-DSGD}} matches that of the centralized minibatch \textbf{\texttt{SGD}}~\cite{OPT_ML} up to constant factors and therefore achieves an asymptotic linear speedup. We now compare this transient time with the existing literature. First, Ref.~\cite{MP_Pu} shows that, under the strong convexity of~$F$, \textbf{\texttt{GT-DSGD}} asymptotically converges at~$\mc{O}(1/k)$; however, the convergence rate derived in~\cite{MP_Pu} depends on arbitrary constants and therefore the transient time is not clear. Second, recent work~\cite{DSGD_Pu,DSGD_SPM_Pu} shows that when each local function~$f_i$ is strongly convex, the corresponding transient time of \textbf{\texttt{DSGD}} is~$\mc{O}\big(n\kappa^6(1-\lambda)^{-2}\big)$. Our results on the transient time~$K_{PL}$ therefore significantly improve upon the dependence of the condition number~$\kappa$ under weaker assumptions on the objective functions, while being moderately worse in terms of the network dependence, i.e.~$1-\lambda$.
\end{rmk}

\section{Numerical Experiments}\label{s_exp}
{\color{black}
In this section, we present numerical experiments to demonstrate the main theoretical results in Section~\ref{S_mr} with the help of learning problems on real-world datasets, summarized in Table~\ref{datasets}, and minimizing certain synthetic functions to illustrate the PL condition.
We consider three different graph topologies, i.e., a directed exponential graph with~$16$ nodes, an undirected grid graph with~$16$ nodes, and an undirected geometric graph with~$100$ nodes; see Fig.~\ref{networks_sample}. 
The primitive doubly stochastic weights are set to be equal for the exponential graph and are generated by the Metroplis rule~\cite{PIEEE_nedich} for the grid and the geometric graphs. The second largest singular values~$\lambda$ associated with the weight matrices of these graphs are~$0.6,0.93$ and~$0.99$, respectively. Towards the stochastic gradient oracle, we consider two different setups: (i) each node has access to a finite collection of data samples and the stochastic gradient is computed with respect to one randomly selected data sample at each iteration; (ii) each node has access to the gradient of its local function subject to random noise, with zero-mean and bounded variance, at each iteration. 
The performance metric of interest is the average of global function values across the nodes~$\frac{1}{n}\sum_{i=1}^{n}F(\x_k^i)$, which we refer to as \textit{loss}, versus the number of epochs\footnote{Each epoch is one effective pass of local data samples at each node.} in~(i) and the number of iterations in~(ii). We manually optimize the parameters of all algorithms across all experiments to achieve their best performances.

\begin{table}[hbt]
\caption{A summary of the datasets used in numerical experiments, available at \href{https://www.openml.org/}{https://www.openml.org/}.}
\vspace{-0.3cm}
\begin{center}
\begin{tabular}{|c|c|c|c|}
\hline
\textbf{Dataset} & \textbf{train}  & \textbf{dimension} & \textbf{classes} \\ \hline
a9a & $48,\!832$ & $124$ & $2$ \\ \hline
w8a & $60,\!000$ & $301$ & $2$  \\ \hline
creditcard & $100,\!000$ & $30$ & $2$  \\ \hline
Fashion-MNIST & $60,\!000$ & $785$ & $10$ \\ \hline
CIFAR-10 & $50,\!000$ & $3073$  & $10$ \\ \hline
STL-10 &  $5,\!000$ & $27649$ & $10$ \\ \hline
\end{tabular}
\end{center}
\label{datasets}
\end{table}

To study the convergence behavior of \textbf{\texttt{GT-DSGD}}, we conduct three different experiments: binary classification with non-convex logistic regression~\cite{LR_NCVX}, multiclass classification with neural networks, and minimizing synthetic non-convex functions that satisfy the global PL condition. We compare the performance of \textbf{\texttt{GT-DSGD}} with \textbf{\texttt{DSGD}}~\cite{DSGD_NIPS} to illustrate the advantages of the former in the setting of heterogeneous data distributions across the nodes; moreover, we use the centralized minibatch \textbf{\texttt{SGD}} as the benchmark to illustrate the scenarios in which \textbf{\texttt{GT-DSGD}} achieves a network-independent performance. The experimental results are described in the next subsections.
It can be verified that the numerical results of \textbf{\texttt{GT-DSGD}} are consistent with the theory in this paper.

\begin{figure*}
\centering
\includegraphics[width=1.75in]{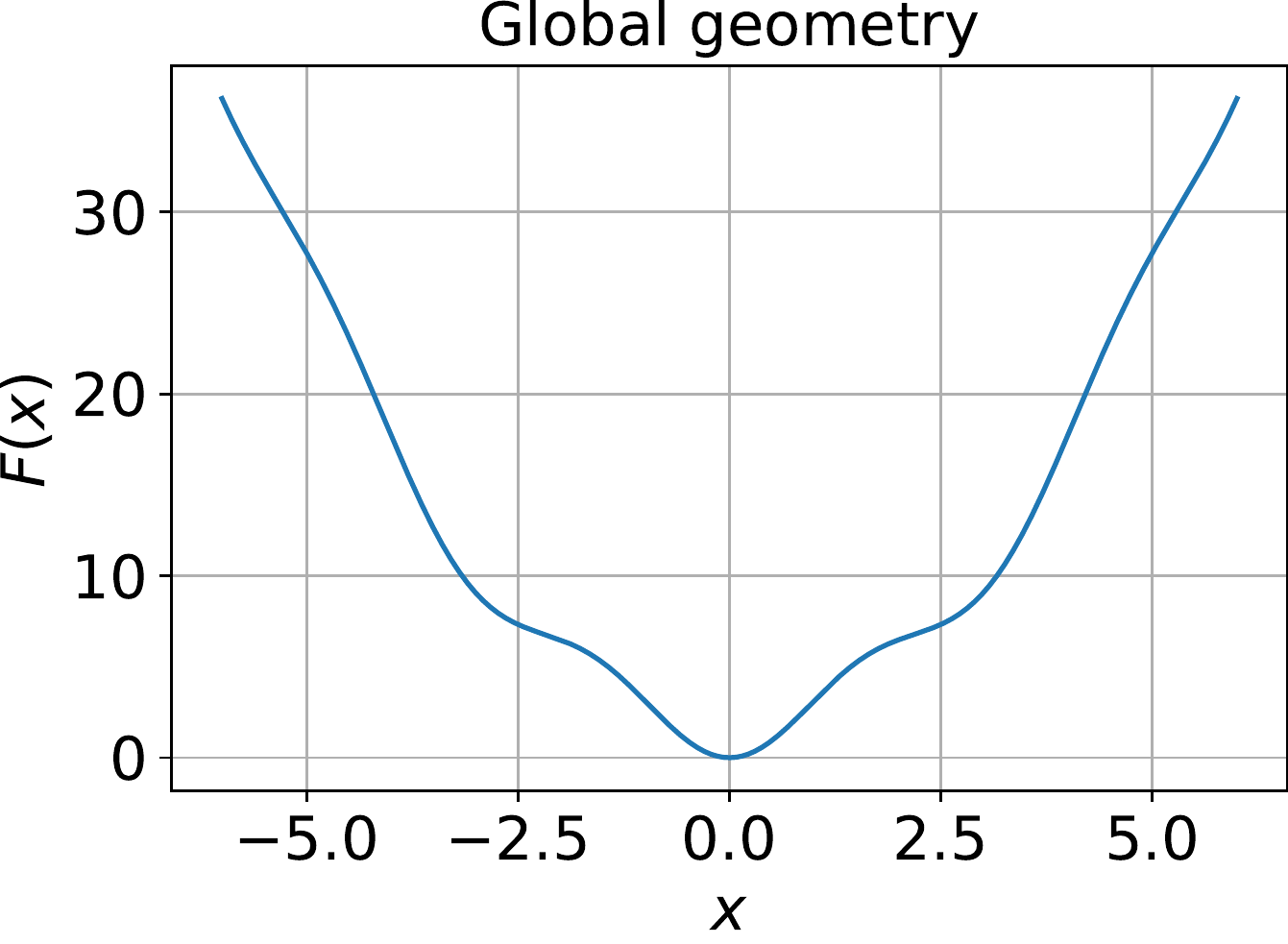} 
\includegraphics[width=1.75in]{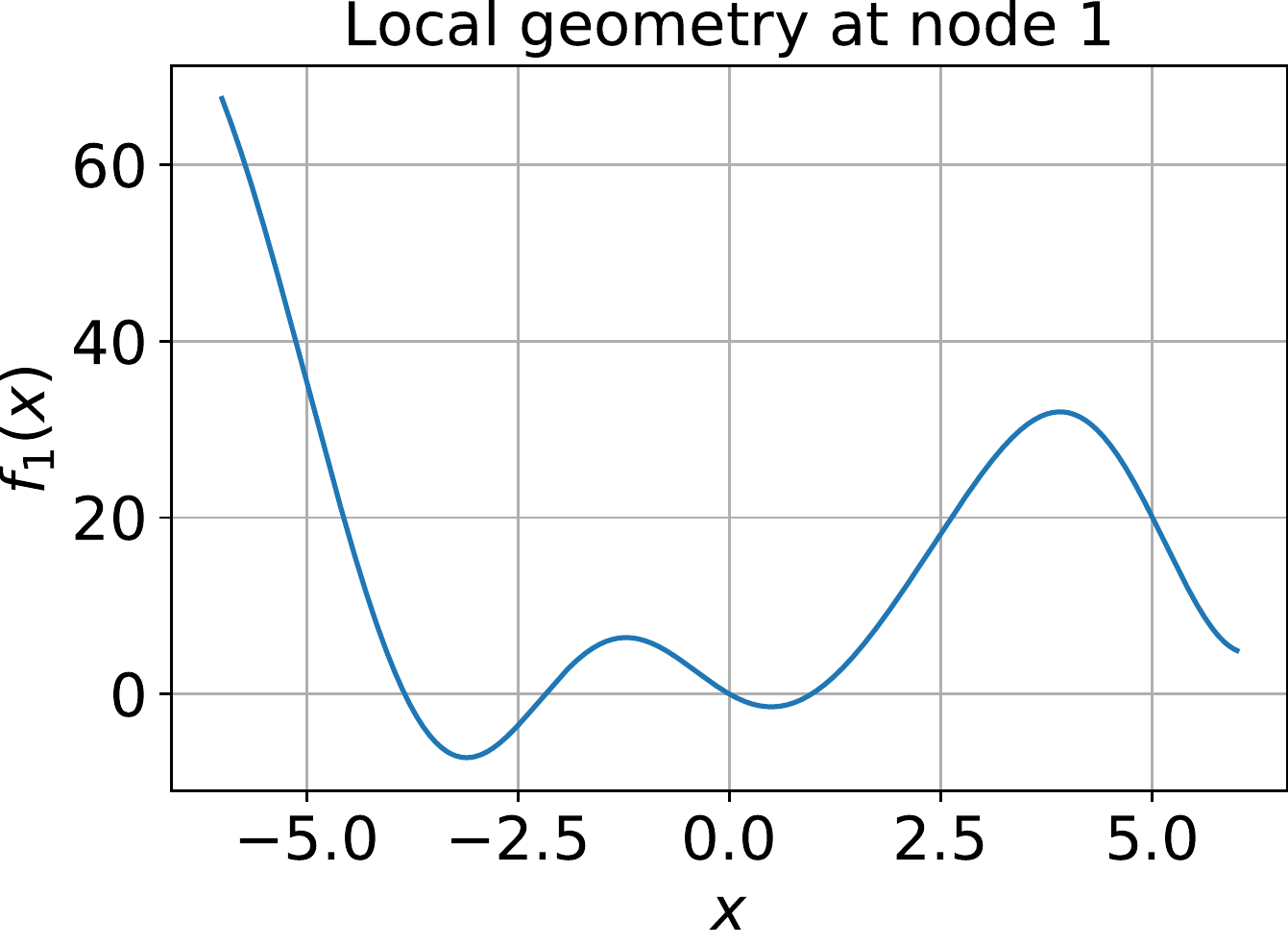}
\includegraphics[width=1.75in]{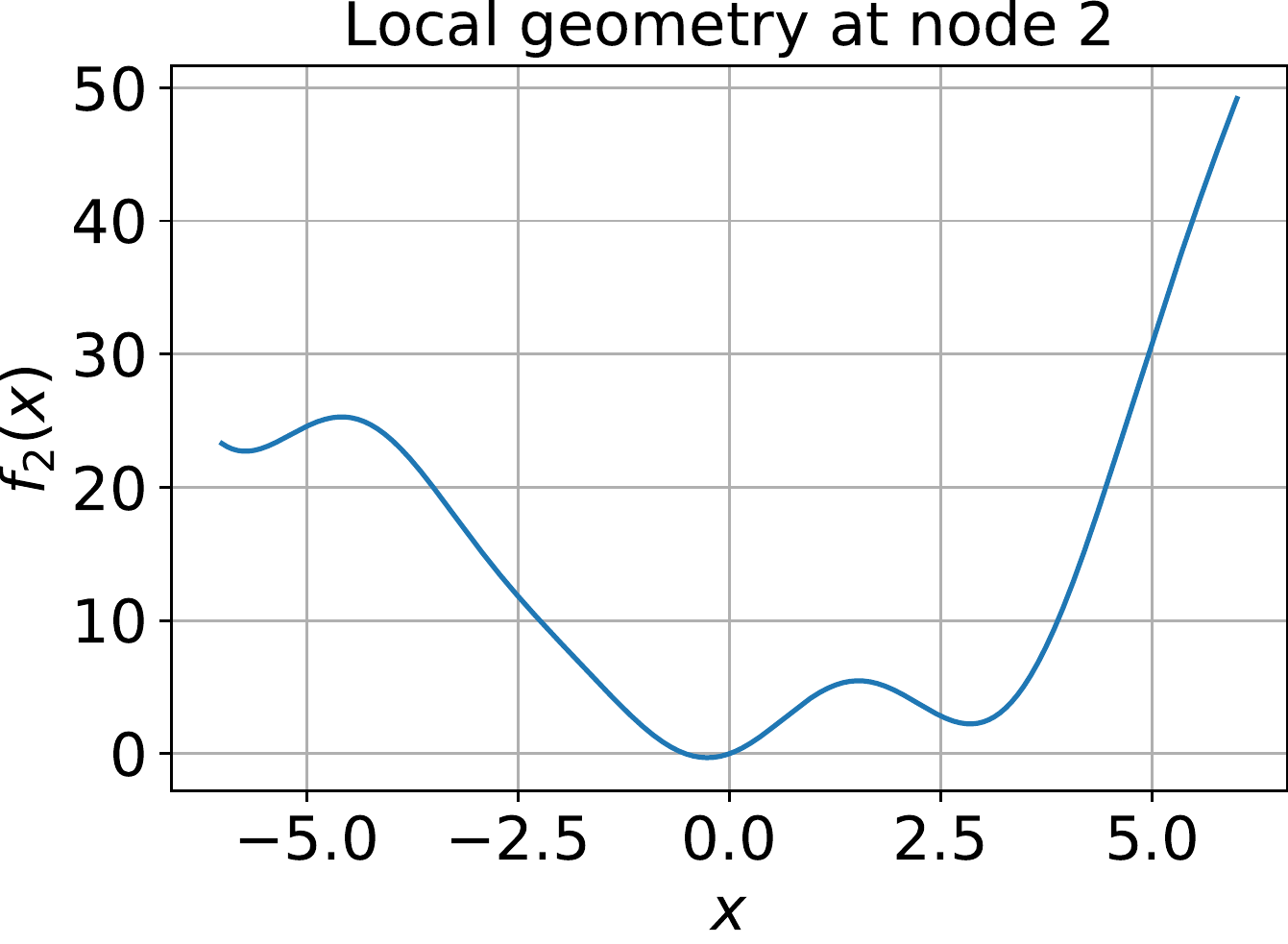}
\includegraphics[width=1.75in]{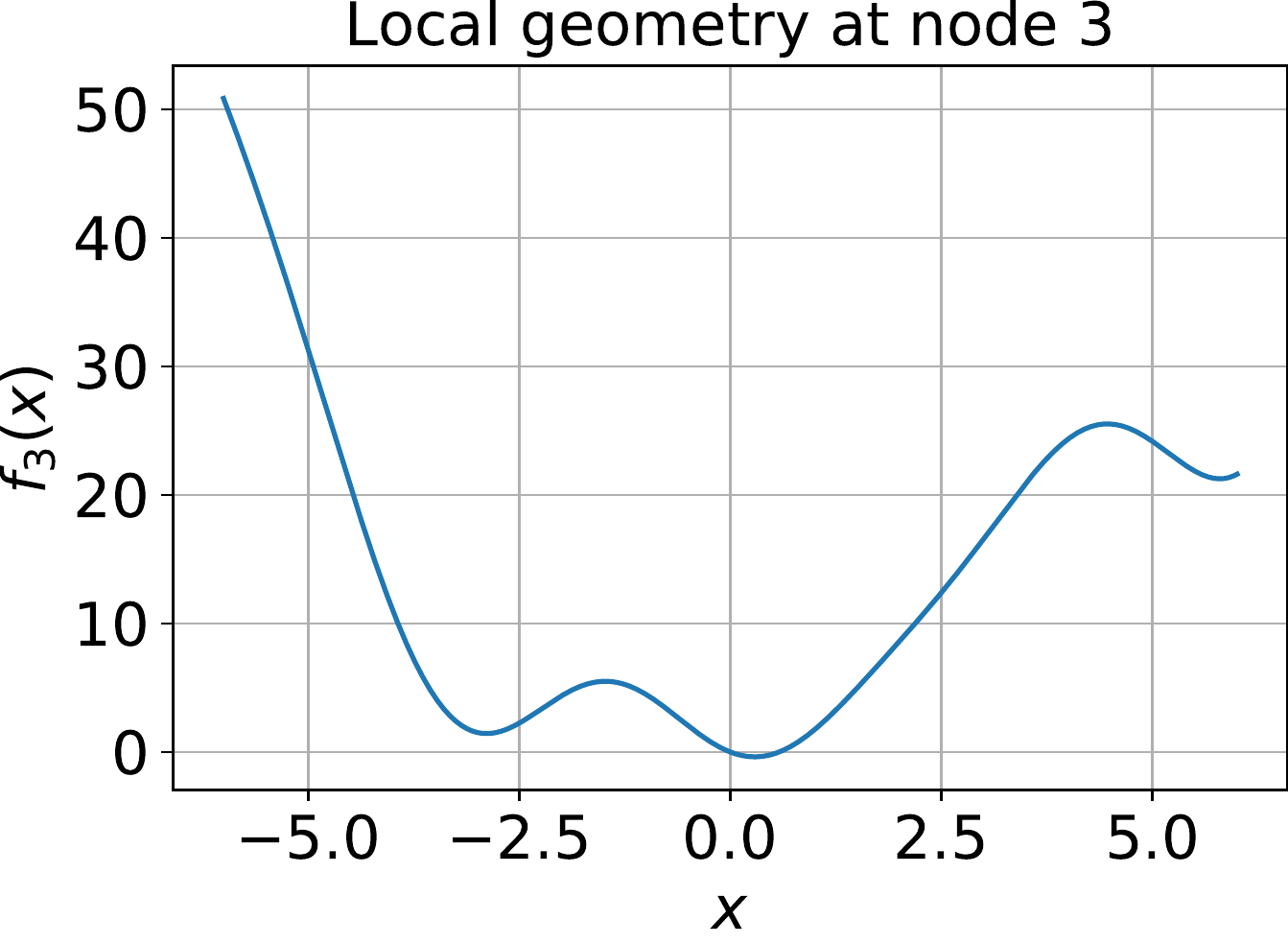}
\caption{The global and local geometries in the experiment with synthetic functions that satisfy the global PL condition.}
\label{PL_functions}
\end{figure*}

\begin{figure*}
\centering
\subfigure[][]{\includegraphics[width=1.75in]{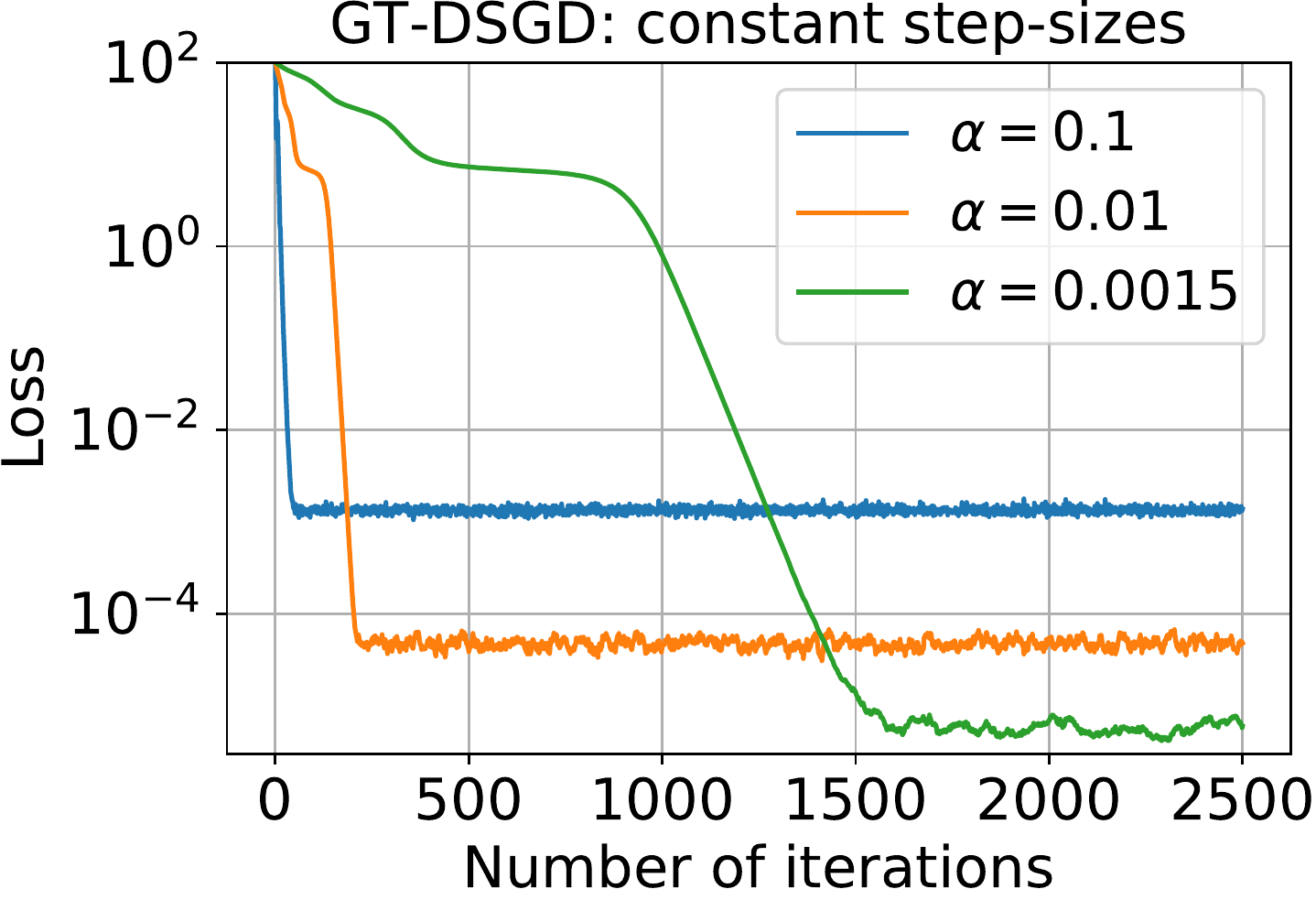}}
\subfigure[][]{\includegraphics[width=1.75in]{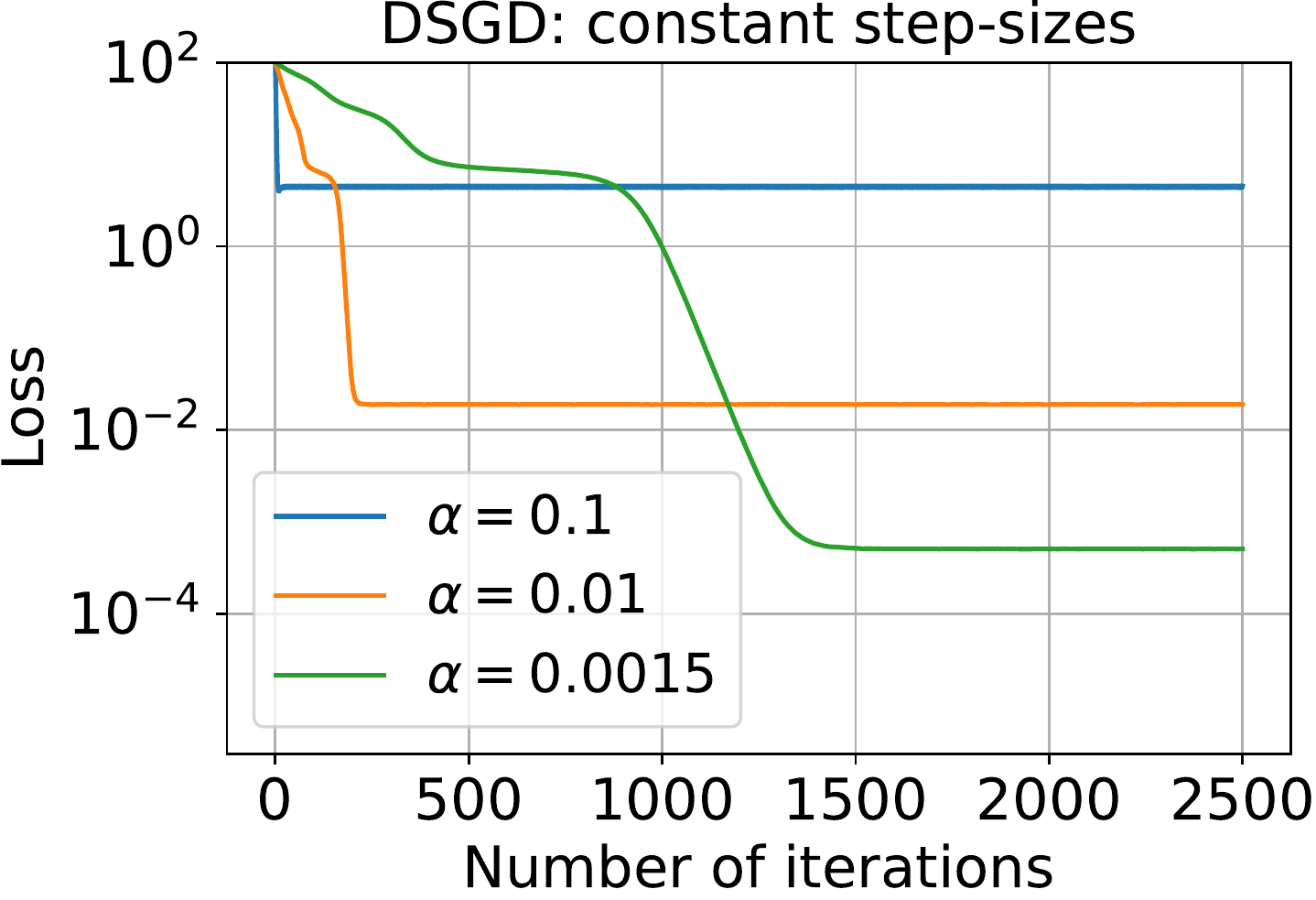}}
\subfigure[][]{\includegraphics[width=1.75in]{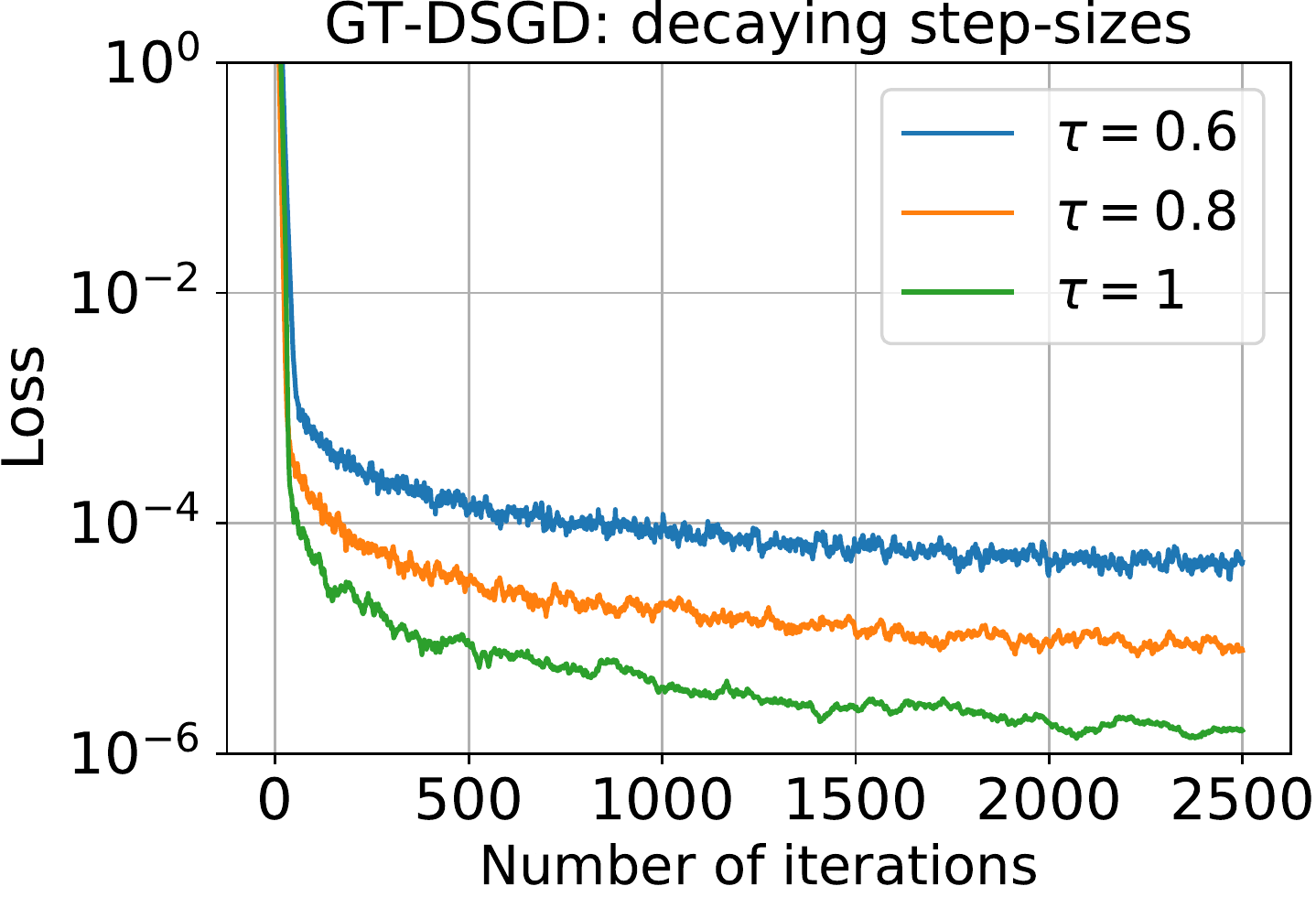}}
\subfigure[][]{\includegraphics[width=1.75in]{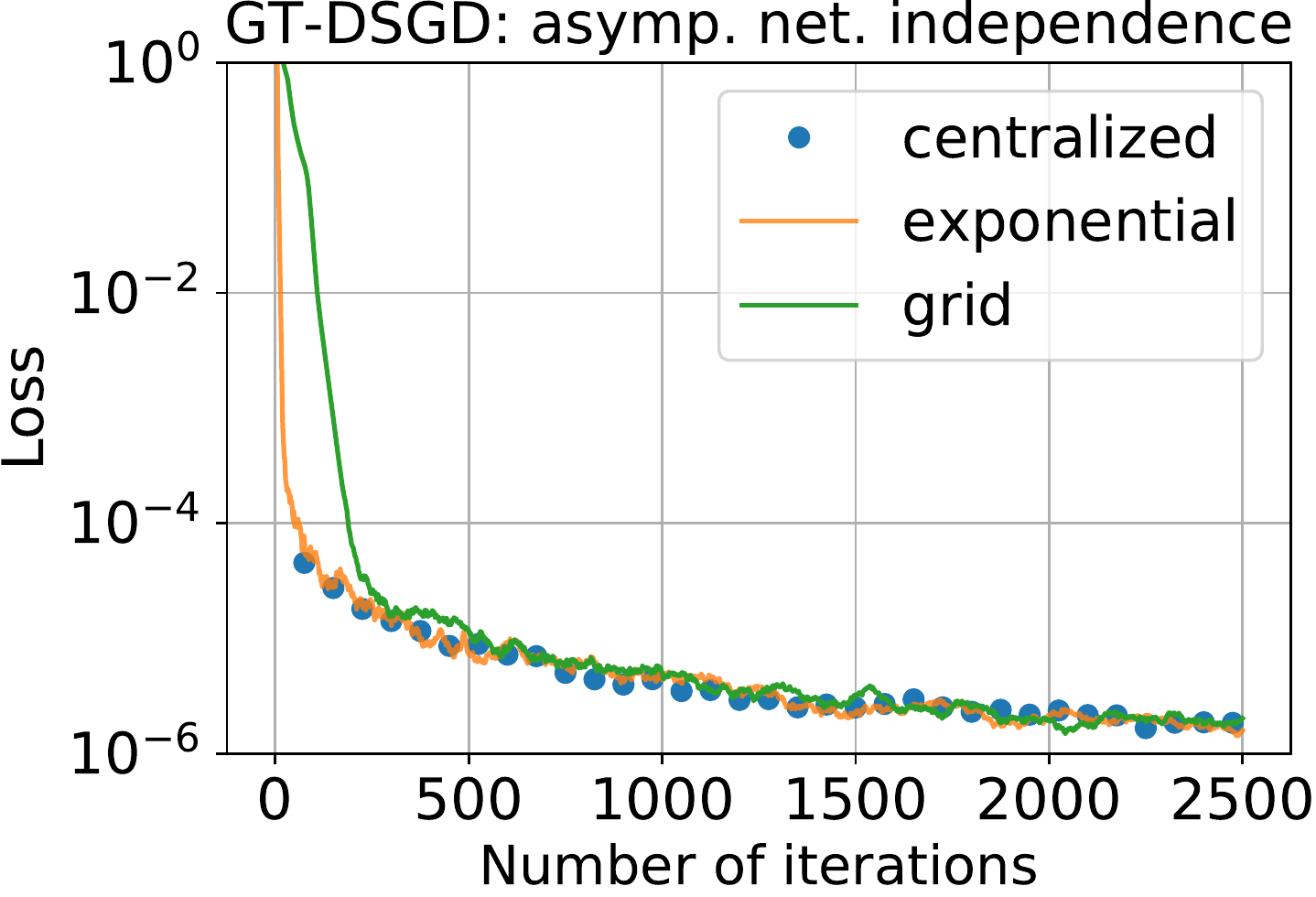}}
\caption{Convergence of \textbf{\texttt{GT-DSGD}} and \textbf{\texttt{DSGD}} under the global PL condition: (a)(b) Inexact linear convergence with different constant step-sizes~$\alpha$. (c) Exact sublinear convergence of \textbf{\texttt{GT-DSGD}} with decaying step-sizes~$\alpha_k = (k+3)^{-\tau}$ under different values of~$\tau$. (d) Exact sublinear convergence of \textbf{\texttt{GT-DSGD}} over different graphs in comparison with the centralized minibatch \textbf{\texttt{SGD}}, all with the decaying step-size~$\a_k = (k+3)^{-1}$.}
\label{PL_loss}
\end{figure*}

\subsection{Non-convex logistic regression for binary classification}
We first consider a binary classification problem with the help of a non-convex logistic regression model~\cite{LR_NCVX}. Specifically, the decentralized optimization problem of interest is given by~$\min_{\x\in\mathbb{R}^p}F(\mb{x}) 
:= \frac{1}{n}\sum_{i=1}^{n}f_{i}(\x)
+ r(\x),$
such that
\begin{align*}
f_{i}(\x) = \frac{1}{m}\sum_{j=1}^m\log\left[1+e^{-(\mb{x}^\top\bds{\theta}_{ij})\xi_{ij}}\right],~r(\x) = \sum_{d=1}^p\frac{R[\x]_d^2}{1+[\x]_d^2},
\end{align*}
where~$\bds{\theta}_{i,j}$ is the feature vector,~$\xi_{i,j}$ is the corresponding binary label, and~$r(\x)$ is a non-convex regularizer with~$R = 10^{-4}$. 

We compare the performance of \textbf{\texttt{GT-DSGD}} over the directed exponential and the grid graphs, both with~$16$ nodes, to the centralized \textbf{\texttt{SGD}} with a minibatch size of~$16$. We consider the best possible constant step-size for both algorithms. 
The numerical results over the a9a, w8a, and creditcard datasets are shown in Fig.~\ref{lr_loss}. It can be observed that, across all datasets, the convergence behavior of \textbf{\texttt{GT-DSGD}} matches that of the centralized minibatch \textbf{\texttt{SGD}} and is independent of the underlying graph topology, as long as the total number of iterations is large enough. This observation is consistent with Corollary~\ref{TRT_ncvx}, demonstrating the network-independent convergence of \textbf{\texttt{GT-DSGD}} under an appropriate constant step-size for general smooth non-convex functions.

\subsection{Neural network for multiclass classification}
We next compare the performance of~\textbf{\texttt{DSGD}} (without gradient tracking) and~\textbf{\texttt{GT-DSGD}}, both with a constant step-size, when the data distributions across the nodes are significantly heterogeneous. To this aim, we consider a harsh problem setup where the data samples are distributed over the~$100$-node geometric graph in Fig.~\ref{networks_sample} such that each node has the same number of data samples and the samples belong to only one or two classes (out of~$10$ possible classes). We consider decentralized training of a neural network with one fully connected hidden layer of~$64$ neurons and sigmoid activation. The experimental results over the Fashion-MNIST, CIFAR-10, and STL-10 datasets are shown in Fig.~\ref{neural_net_loss}. We observe that \textbf{\texttt{GT-DSGD}} significantly outperforms \textbf{\texttt{DSGD}} in this setting, demonstrating the robustness of \textbf{\texttt{GT-DSGD}} to heterogeneous data across the nodes; see also Remark~\ref{GT_versus_DSGD}.

\vspace{-0.2cm}
\subsection{Synthetic functions that satisfy the global PL condition}
Finally, we show the performance of \textbf{\texttt{GT-DSGD}} when the global function satisfies the PL condition and compare it with \textbf{\texttt{DSGD}} and the centralized minibatch \textbf{\texttt{SGD}}. In particular, each local function is chosen as~$f_i(x) = x^2 + 3\sin^2(x) + a_ix\cos(x)$, such that~${\sum_{i=1}^na_i = 0}$ and~${a_i\neq0,\forall i\in\mc{V}}$, leading to the global function~${F(x) = x^2 + 3\sin^2(x)}$,
which is clearly non-convex and further satisfies the PL condition~\cite{PL_1}. It can be verified that each local function is highly nonlinear and significantly different from the global function; see Fig.~\ref{PL_functions}. We inject random Gaussian noise with mean~$0$ and the standard deviation~$0.5$ to the gradient computation at each node. The corresponding numerical results can be found in Fig.~\ref{PL_loss}, where the experiments in Fig.~\ref{PL_loss}(a)-(c) are performed over the directed exponential graph with~$16$ nodes.
It can be observed from Fig.~\ref{PL_loss}(a) that \textbf{\texttt{GT-DSGD}} achieves inexact linear convergence under constant step-sizes; moreover, a smaller step-size leads to a smaller steady-state error but at a slower rate. Compared with the convergence of \textbf{\texttt{DSGD}} under constant step-sizes shown in Fig.~\ref{PL_loss}(b), \textbf{\texttt{GT-DSGD}} achieves a smaller steady-state error much faster benefiting from gradient tracking that effectively exploits the global geometry. Fig.~\ref{PL_loss}(c) shows that \textbf{\texttt{GT-DSGD}} achieves exact sublinear convergence to the optimal solution with decaying step-sizes of the form~${\alpha_k = (k+3)^{-\tau}}$ under different values of~$\tau$ chosen in~$(0.5,1]$. Clearly, a larger $\tau$ leads to a faster rate as Theorem~\ref{PL_as} suggests. Finally, we observe from Fig.~\ref{PL_loss}(d) that the convergence rate of \textbf{\texttt{GT-DSGD}} with~${\tau = 1}$ matches that of the centralized minibatch \textbf{\texttt{SGD}} with the same decaying step-size after a small number of transient iterations over different graphs. This phenomenon demonstrates the asymptotically network-independent and optimal~$\mc{O}(1/k)$ rate achieved by \textbf{\texttt{GT-DSGD}}. This observation is consistent with Theorem~\ref{F_ave_rate}.
}

\vspace{-0.1cm}
\section{Convergence analysis: the general non-convex case}\label{S_ncvx}
It is straightforward to verify that the random variables generated by \textbf{\texttt{GT-DSGD}} are square-integrable and that $\mb{x}_{k},\mb{y}_{k}$ are $\F_k$-measurable and $\g(\x_k,\bxi_k)$ is $\F_{k+1}$-measurable, $\forall k$. In this section, we derive general bounds on the stochastic gradient tracking process, which may be of independent interest, and prove Theorem~\ref{conv_ncvx}.
We start by presenting some standard results on decentralized stochastic gradient tracking algorithms; their proofs can be found, e.g., in~\cite{harnessing,MP_Pu,GTVR}.
\begin{lem}\label{basic}
Under Assumption~\ref{f}-\ref{o}, We have the following:
\begin{enumerate}[(a)]
    \item $\ln\mb{W}\mb{x}-\mb{J}\mb{x}\rn\leq\lambda\ln\mb{x}-\mb{J}\mb{x}\rn,\forall\mb{x}\in\mbb{R}^{np}$. \label{W} 
    \item $\ol{\y}_{k+1} = \agk,\forall k\geq0$.        \label{track}
    \item $\big\|\ol{\nf}_{k}-\nabla F(\ol{\mb{x}}_k)\big\|^2\leq  \frac{L^2}{n}\left\|\mb{x}_k-\mb{J}\mb{x}_k\right\|^2, \forall k\geq0$. \label{Lbound}
    \item {$\E[\langle \g_i(\x_k^i,\bxi_k^i) - \nabla f_i(\x_k^i),\g_r(\x_k^r,\bxi_k^r) - \nabla f_r(\x_k^r)\rangle|\F_k] = 0$}, $\forall k\geq0, \forall i,r\in\mc{V}$ such that~$i\neq r$. \label{indep}
    \item $\E\big[\|\agk - \ol{\nf}_{k}\|^2 |\F_k \big] \leq \nu_a^2/n,\forall k\geq0$. \label{vrbound}
\end{enumerate}
\end{lem}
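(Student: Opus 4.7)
The five claims split naturally: (a) is a linear-algebra fact about doubly-stochastic matrices, (b) is a conservation law proved by induction, (c) combines smoothness with Jensen's inequality, and (d)--(e) are variance computations exploiting the independence of the stochastic oracles. I would dispatch them in that order, since the later parts rely on the Kronecker and conditioning setup established earlier.

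For (a), I would invoke the doubly-stochastic structure of $\ul{\W}$ (Assumption~\ref{net}), which after Kronecker-lifting gives $\W\J=\J\W=\J$. Then $\W\x-\J\x=(\W-\J)\x=(\W-\J)(\x-\J\x)$ because $(\W-\J)\J\x=\mb{0}$, and the bound follows directly from the definition $\lambda:=\|\W-\J\|$. For (b), I would proceed by induction on $k$: left-multiplying the $\y$-recursion in~\eqref{y} by $\frac{1}{n}(\mb{1}_n^\top\otimes\I_p)$ and using $\mb{1}_n^\top\ul{\W}=\mb{1}_n^\top$ yields $\ol{\y}_{k+1}=\ol{\y}_k+\ol{\g}_k-\ol{\g}_{k-1}$; the base case $\ol{\y}_1=\ol{\g}_0$ uses the initializations $\y_0=\mb{0}_{np}$ and $\g_{-1}=\mb{0}_{np}$ prescribed in Algorithm~\ref{alabel}, and the induction then telescopes. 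For (c), I would write $\ol{\nf}_k-\nabla F(\ol{\x}_k)=\frac{1}{n}\sum_{i=1}^n[\nabla f_i(\x_k^i)-\nabla f_i(\ol{\x}_k)]$, apply Jensen's inequality to move the squared norm inside the sum, and invoke the $L$-smoothness from Assumption~\ref{f} to bound each summand by $L^2\|\x_k^i-\ol{\x}_k\|^2$, recognizing the total as $\frac{L^2}{n}\|\x_k-\J\x_k\|^2$.

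For (d) and (e), the key observation is that $\x_k^i$ is $\F_k$-measurable (since it depends only on noise through iteration $k-1$), whereas $\bxi_k^i$ is independent of $\F_k$ by Assumption~\ref{o}. Conditionally on $\F_k$, the residuals $\g_i(\x_k^i,\bxi_k^i)-\nabla f_i(\x_k^i)$ and $\g_r(\x_k^r,\bxi_k^r)-\nabla f_r(\x_k^r)$ are thus measurable functions of the mutually independent variables $\bxi_k^i$ and $\bxi_k^r$, and each has conditional mean zero; (d) follows immediately. For (e), expanding $\E[\|\ol{\g}_k-\ol{\nf}_k\|^2|\F_k]=\frac{1}{n^2}\sum_{i,r}\E[\langle\cdot,\cdot\rangle|\F_k]$ and applying (d) kills the off-diagonal terms, while the per-node variance bound $\nu_i^2$ from Assumption~\ref{o} controls the diagonal, summing to $\frac{1}{n^2}\sum_i\nu_i^2=\nu_a^2/n$. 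I do not anticipate any real obstacle here, as these are all standard facts; the only care required is respecting the initializations in the induction for (b) and being fully explicit about the conditional-independence structure in (d)--(e), which is a measurability argument that will recur throughout the subsequent analysis in Section~\ref{S_ncvx}.
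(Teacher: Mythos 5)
Your proposal is correct in all five parts and uses exactly the standard arguments: the paper itself does not prove Lemma~\ref{basic} but defers to the cited references, and your derivations (the $(\W-\J)\J=\mb{O}$ trick for (a), the telescoping induction from the initializations $\y_0=\mb{0}$ and $\g_{-1}=\mb{0}$ for (b), Jensen plus $L$-smoothness for (c), and the conditional-independence/zero-mean argument for (d)--(e) with the cross terms vanishing to give $\frac{1}{n^2}\sum_i\nu_i^2=\nu_a^2/n$) are precisely what those references contain. No gaps.
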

As a consequence of the state update of \textbf{\texttt{GT-DSGD}} described in~\eqref{x} and Lemma~\ref{basic}(\ref{track}), we have:~$\forall k\geq0$, 
\begin{align}\label{ave}
\ol{\x}_{k+1} 
= \ol{\x}_k - \a_k \ol{\y}_{k+1} 
= \ol{\x}_k - \a_k \agk, 
\end{align}
i.e., the mean state~$\ol{\x}_k$ of the network proceeds in the direction of the average of local stochastic gradients~$\agk$. The following lemma provides several useful relations on the consensus process of the state vectors across the network~\cite{GTVR}.
\begin{lem}\label{cons}
Let Assumption~\ref{net} hold. We have the following inequalities:~$\forall k\geq0$,
\begin{gather*}
\left\|\mb{x}_{k+1}-\mb{J}\mb{x}_{k+1}\right\|^2 \leq \frac{1+\lambda^2}{2}\left\|\mb{x}_k-\mb{J}\mb{x}_k\right\|^2 \n\\
\qquad\qquad\qquad\qquad\qquad\qquad~~+ \frac{2\a_k^2\lambda^2}{1-\lambda^2}\left\|\mb{y}_{k+1}-\mb{J}\mb{y}_{k+1}\right\|^2. \\
\left\|\mb{x}_{k+1}-\mb{J}\mb{x}_{k+1}\right\|^2 \leq 2\lambda^2\left\|\mb{x}_k-\mb{J}\mb{x}_k\right\|^2 \n\\
\qquad\qquad\qquad\qquad\qquad\qquad~~+ 2\a_k^2\lambda^2\left\|\mb{y}_{k+1}-\mb{J}\mb{y}_{k+1}\right\|^2. \n\\
\left\|\mb{x}_{k+1}-\mb{J}\mb{x}_{k+1}\right\| \leq \lambda\left\|\mb{x}_k-\mb{J}\mb{x}_k\right\|^2 + \a_k\lambda\left\|\mb{y}_{k+1}-\mb{J}\mb{y}_{k+1}\right\|.
\end{gather*}
\end{lem}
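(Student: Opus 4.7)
The plan is to derive all three inequalities from a single deterministic identity together with the contraction property of $\W$ on the consensus complement. I would first start from the state update~\eqref{x}, $\x_{k+1} = \W(\x_k - \a_k \y_{k+1})$, and exploit the fact that $\ul{\W}$ is doubly stochastic; combined with the Kronecker structure this gives $\W\J = \J\W = \J$, so that $\J\x_{k+1} = \J(\x_k - \a_k\y_{k+1})$. Subtracting and using $(\W-\J)\J = \W\J - \J^2 = \J - \J = 0$ to insert the centering projections for free yields
\[
\x_{k+1} - \J\x_{k+1} = (\W - \J)(\x_k - \J\x_k) - \a_k(\W - \J)(\y_{k+1} - \J\y_{k+1}).
\]
Applying the triangle inequality and the contraction $\|(\W - \J)\v\| \leq \lambda\|\v\|$ of Lemma~\ref{basic}(\ref{W}) immediately gives $\|\x_{k+1} - \J\x_{k+1}\| \leq \lambda\|\x_k - \J\x_k\| + \a_k\lambda\|\y_{k+1} - \J\y_{k+1}\|$, which is the third (scalar) inequality of the statement.

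The two squared inequalities follow by squaring this scalar bound and applying Young's inequality $(a+b)^2 \leq (1+\eta)a^2 + (1+\eta^{-1})b^2$ with two different choices of $\eta > 0$. Setting $\eta = 1$ produces $2\lambda^2\|\x_k - \J\x_k\|^2 + 2\a_k^2\lambda^2\|\y_{k+1} - \J\y_{k+1}\|^2$, which is the second inequality. For the first (tighter) bound I would pick $\eta = (1-\lambda^2)/(2\lambda^2)$, so that $(1+\eta)\lambda^2 = (1+\lambda^2)/2$ as required; a short computation gives $1 + \eta^{-1} = (1+\lambda^2)/(1-\lambda^2) \leq 2/(1-\lambda^2)$, yielding the advertised coefficient $2\a_k^2\lambda^2/(1-\lambda^2)$ on the $\|\y_{k+1} - \J\y_{k+1}\|^2$ term.

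There is no substantive obstacle: the entire argument rests on the $\W$-invariance of the consensus subspace (a direct consequence of the doubly-stochastic Assumption~\ref{net}) together with the spectral estimate already recorded as Lemma~\ref{basic}(\ref{W}), with the squared versions being routine applications of Young's inequality under two calibrations of the split parameter $\eta$.
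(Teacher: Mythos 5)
Your proof is correct and is exactly the standard argument that the paper defers to its citation of~\cite{GTVR}: center the update $\x_{k+1}=\W(\x_k-\a_k\y_{k+1})$ using $\W\J=\J\W=\J$ and $(\W-\J)\J=\mb{O}$, contract with $\|\W-\J\|=\lambda$, and then split the square via Young's inequality with $\eta=1$ and $\eta=(1-\lambda^2)/(2\lambda^2)$ respectively. Note that your derivation also silently corrects a typo in the stated third inequality, where the first right-hand-side term should read $\lambda\ln\x_k-\J\x_k\rn$ rather than $\lambda\ln\x_k-\J\x_k\rn^2$.
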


\subsection{A descent inequality}
In this subsection, we establish a key descent inequality that characterizes the expected decrease of the value of the global objective function~$F$ over each iteration in light of~\eqref{ave}.
\begin{lem}\label{ds}
Let~Assumptions~\ref{f}-\ref{o} hold. If~$0<\a_k\leq\frac{1}{2L}$, then we have:~$\forall k\geq0$,
\begin{align*}
\E\left[F(\ol{\x}_{k+1})|\F_k\right] 
\leq&~F(\ol{\x}_{k}) -\frac{\a_k}{2}\ln\nabla F(\ol{\x}_{k})\rn^2 - \frac{\a_k}{4}\ln\ol{\nf}_{k}\rn^2 \n\\
&+ \frac{\a_k L^2}{2}\frac{\ln\x_k - \mb{J}\x_k\rn^2}{n} +\frac{\a_k^2 L\nu^2_a}{2n}.
\end{align*}
\end{lem}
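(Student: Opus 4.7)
The plan is to derive this descent inequality from the standard $L$-smoothness of $F$ (which follows directly from Assumption~\ref{f} since $F$ is the average of $L$-smooth functions), together with the mean dynamics in~\eqref{ave} and the unbiasedness/variance bounds on the stochastic gradient averages provided by Lemma~\ref{basic}. The step-size condition $\a_k \leq 1/(2L)$ will be used only at the very end to absorb the $\mc{O}(\a_k^2)$ term produced by the quadratic upper bound into the $\|\ol{\nf}_k\|^2$ term.

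First, I would apply the $L$-smoothness descent inequality for $F$ between $\ol{\x}_k$ and $\ol{\x}_{k+1}$ and substitute $\ol{\x}_{k+1} - \ol{\x}_k = -\a_k\agk$ from~\eqref{ave}, yielding
\begin{align*}
F(\ol{\x}_{k+1}) \leq F(\ol{\x}_k) - \a_k\langle\nabla F(\ol{\x}_k),\agk\rangle + \tfrac{L\a_k^2}{2}\|\agk\|^2.
\end{align*}
Taking $\E[\,\cdot\,|\F_k]$ and using Assumption~\ref{o}(i) (which gives $\E[\agk|\F_k]=\ol{\nf}_k$) together with the bias--variance decomposition $\E[\|\agk\|^2|\F_k] = \|\ol{\nf}_k\|^2 + \E[\|\agk-\ol{\nf}_k\|^2|\F_k]$ bounded via Lemma~\ref{basic}(\ref{vrbound}), I obtain
\begin{align*}
\E[F(\ol{\x}_{k+1})|\F_k] \leq F(\ol{\x}_k) - \a_k\langle\nabla F(\ol{\x}_k),\ol{\nf}_k\rangle + \tfrac{L\a_k^2}{2}\|\ol{\nf}_k\|^2 + \tfrac{L\a_k^2\nu_a^2}{2n}.
\end{align*}

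Next, I would split the cross term through the polarization identity $-\langle a,b\rangle = \tfrac{1}{2}\|a-b\|^2 - \tfrac{1}{2}\|a\|^2 - \tfrac{1}{2}\|b\|^2$ applied to $a=\nabla F(\ol{\x}_k)$ and $b=\ol{\nf}_k$, and then bound the resulting $\|\nabla F(\ol{\x}_k) - \ol{\nf}_k\|^2$ term using Lemma~\ref{basic}(\ref{Lbound}), namely $\|\ol{\nf}_k - \nabla F(\ol{\x}_k)\|^2 \leq \tfrac{L^2}{n}\|\x_k - \J\x_k\|^2$. This produces the two target terms $-\tfrac{\a_k}{2}\|\nabla F(\ol{\x}_k)\|^2$ and $+\tfrac{\a_k L^2}{2n}\|\x_k - \J\x_k\|^2$, while the remaining $\|\ol{\nf}_k\|^2$ contributions combine as $-\tfrac{\a_k}{2}(1-L\a_k)\|\ol{\nf}_k\|^2$.

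The final step is to invoke the hypothesis $\a_k \leq 1/(2L)$, which ensures $1-L\a_k \geq 1/2$, hence $-\tfrac{\a_k}{2}(1-L\a_k)\|\ol{\nf}_k\|^2 \leq -\tfrac{\a_k}{4}\|\ol{\nf}_k\|^2$. Assembling everything yields the stated inequality. There is no real obstacle here; the only point requiring care is the correct use of the tower property to combine the unbiasedness (Assumption~\ref{o}) with the variance bound on $\agk-\ol{\nf}_k$ from Lemma~\ref{basic}(\ref{vrbound}), but both are already packaged in the cited lemma, so the argument reduces to a clean algebraic rearrangement.
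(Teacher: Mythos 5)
Your proposal is correct and follows essentially the same route as the paper's own proof: the $L$-smoothness descent inequality applied to $\ol{\x}_{k+1}=\ol{\x}_k-\a_k\agk$, the polarization identity on the cross term with Lemma~\ref{basic}(\ref{Lbound}) to control $\|\nabla F(\ol{\x}_k)-\ol{\nf}_k\|^2$, the bias--variance decomposition of $\E[\|\agk\|^2|\F_k]$ via Lemma~\ref{basic}(\ref{vrbound}), and finally $1-\a_kL\geq 1/2$ to obtain the $-\frac{\a_k}{4}\|\ol{\nf}_k\|^2$ term. The only difference is the order in which the two decompositions are applied, which is immaterial.
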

\begin{proof}
Since~$F$ is~$L$-smooth, we have~\cite{book_polyak}:~$\forall \x,\y\in\R^p$,
\begin{align}\label{L_lemma}
F(\y) \leq F(\x) + \l \nabla F(\x), \y - \x \r + \frac{L}{2}\|\y-\x\|^2.
\end{align}
Setting~$\y = \ol{\x}_{k+1}$ and~$\x = \ol{\x}_{k}$ in~\eqref{L_lemma} to obtain:~$\forall k\geq0$,
\begin{align*}
F(\ol{\x}_{k+1}) \leq F(\ol{\x}_{k}) -\a_k\l \nabla F(\ol{\x}_{k}), \agk \r + \frac{\a_k^2L}{2}\|\agk\|^2.
\end{align*}
Conditioning on~$\F_k$, by~$\E[\agk|\F_k] = \ol{\nf}_{k}$, obtains:~$\forall k\geq0$,
\begin{align}\label{d0}
&\E[F(\ol{\x}_{k+1})|\F_k] \n\\
\leq&~F(\ol{\x}_{k}) -\a_k\l \nabla F(\ol{\x}_{k}), \ol{\nf}_{k} \r + \frac{\a_k^2L}{2}\E\left[\ln\agk\rn^2|\F_k\right] \nonumber\\
=&~F(\ol{\x}_{k}) -\frac{\a_k}{2}\ln\nabla F(\ol{\x}_{k})\rn^2 - \frac{\a_k}{2}\ln\ol{\nf}_{k}\rn^2 \n\\
&+ \frac{\a_k}{2}\ln\nabla F(\ol{\x}_{k}) - \nf_{k}\rn^2
+ \frac{\a_k^2L}{2}\E\left[\ln\agk\rn^2|\F_k\right] \nonumber\\
\leq&~F(\ol{\x}_{k}) -\frac{\a_k}{2}\|\nabla F(\ol{\x}_{k})\|^2 - \frac{\a_k}{2}\|\ol{\nf}_{k}\|^2\n\\
&+ \frac{\a_k L^2}{2n}\|\x_k - \mb{J}\x_k\|^2 + \frac{\a_k^2L}{2}\E\left[\ln\agk\rn^2|\F_k\right],
\end{align}
where the equality above uses~$\l\x,\y\r = \frac{1}{2}(\|\x\|^2 + \|\y\|^2 - \|\x-\y\|^2 ),\forall \x,\y\in\R^p$, and the last inequality is due to Lemma~\ref{basic}(\ref{Lbound}). For the last term in~\eqref{d0}, note that:~$\forall k\geq0$,
\begin{align}\label{g_ave}
\E\left[\left\|\agk\right\|^2|\F_k\right] 
=&~\E\left[\left\|\agk - \ol{\nf}_{k} + \ol{\nf}_{k}\right\|^2|\F_k\right] \n\\
=&~\E\left[\left\|\agk - \ol{\nf}_{k}\right\|^2|\F_k\right] + \|\ol{\nf}_{k}\|^2 \n\\
\leq&~\nu_a^2/n + \left\|\ol{\nf}_{k}\right\|^2,
\end{align}
where the second equality uses that~$\ol{\nf}_{k}$ is~$\F_k$-measurable and~$\E[\agk|\F_k] = \ol{\nf}_{k}$, and the last inequality uses Lemma~\ref{basic}(\ref{vrbound}).
We now use~\eqref{g_ave} in~\eqref{d0} to obtain:~$\forall k\geq0$,
\begin{align*}
&\E[F(\ol{\x}_{k+1})|\F_k] 
\leq F(\ol{\x}_{k}) -\frac{\a_k}{2}\|\nabla F(\ol{\x}_{k})\|^2 
+ \frac{\a_k^2L\nu_a^2}{2n}  \n\\
&\qquad\qquad~~- \frac{\a_k\left(1-\a_k L\right)}{2}\|\ol{\nf}_{k}\|^2 + \frac{\a_k L^2}{2n}\|\x_k - \mb{J}\x_k\|^2.
\end{align*}
The proof follows by noting that~$1-\a_k L\geq\frac{1}{2}$, if~{$0<\a_k\leq\frac{1}{2L}$}, $\forall k\geq0$, in the inequality above.
\end{proof}
Compared with the corresponding descent inequality for the centralized stochastic gradient descent, see, e.g.,~\cite{OPT_ML,book_polyak},
the descent inequality for \textbf{\texttt{GT-DSGD}} derived in
Lemma~\ref{ds} has an additional network consensus error term~$\ln\x_k - \mb{\mb{J}}\x_k\rn$. We therefore seeks for means to control this perturbation in order to establish the convergence of \textbf{\texttt{GT-DSGD}}. We will bound the consensus and the gradient tracking error jointly.

\subsection{Bounding the gradient tracking error}\label{sec_GT_general_bounds}
\noindent
In this subsection, we analyze the gradient tracking process.
\begin{lem}\label{gt}
Let Assumption~\ref{f}-\ref{o} hold. We have:~$\forall k\geq0$,
\begin{align*}
&\E\big[\ln\y_{k+2} -\mb{J}\y_{k+2}\rn^2 \big] \n\\
\leq&~\lambda^2\E\big[\ln \mb{y}_{k+1}-\mb{J}\mb{y}_{k+1}\rn^2\big]
+\lambda^2\E\big[\ln\g_{k+1} - \g_{k}\rn^2\big] \n\\
&+2\E\left[\l \left(\mb{W}-\mb{J}\right)\mb{y}_{k+1},\left(\mb{W} -\mb{J}\right)\left(\nf_{k} - \g_{k}\right) \r \right] \n\\
&+2\E\left[\l \left(\mb{W}-\mb{J}\right)\mb{y}_{k+1},\left(\mb{W} -\mb{J}\right)\left(\nf_{k+1} - \nf_{k}\right) \r \right] 
\end{align*}
\end{lem}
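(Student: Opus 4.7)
The plan is to unroll the recursion once and then separate stochastic and deterministic parts of $\g_{k+1}-\g_k$ so that the zero-mean component gets killed by taking a conditional expectation.

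First, I would use the gradient tracking update $\mb{y}_{k+2}=\mb{W}(\mb{y}_{k+1}+\g_{k+1}-\g_k)$ together with $\mb{J}\mb{W}=\mb{J}$ (double-stochasticity) and $\mb{W}\mb{J}=\mb{J}^2=\mb{J}$ to rewrite the tracking error as
\[
\mb{y}_{k+2}-\mb{J}\mb{y}_{k+2}
=(\mb{W}-\mb{J})\bigl(\mb{y}_{k+1}-\mb{J}\mb{y}_{k+1}\bigr)+(\mb{W}-\mb{J})(\g_{k+1}-\g_k).
\]
Expanding $\|\mb{a}+\mb{b}\|^2=\|\mb{a}\|^2+2\langle\mb{a},\mb{b}\rangle+\|\mb{b}\|^2$ and invoking Lemma~\ref{basic}(\ref{W}) on each square (which gives the spectral bound $\|(\mb{W}-\mb{J})\mb{v}\|\leq\lambda\|\mb{v}-\mb{J}\mb{v}\|$ for the $\mb{y}_{k+1}$-term, and $\|(\mb{W}-\mb{J})\mb{v}\|\leq\lambda\|\mb{v}\|$ for $\mb{v}=\g_{k+1}-\g_k$) immediately yields the two $\lambda^2$ terms in the claimed bound. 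What remains is the cross term $2\langle(\mb{W}-\mb{J})\mb{y}_{k+1},(\mb{W}-\mb{J})(\g_{k+1}-\g_k)\rangle$.

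The main idea is to split
\[
\g_{k+1}-\g_k=\bigl(\g_{k+1}-\nf_{k+1}\bigr)+\bigl(\nf_{k+1}-\nf_k\bigr)+\bigl(\nf_k-\g_k\bigr),
\]
which produces three inner-product pieces. Taking total expectation and then conditioning on $\F_{k+1}$, the first piece vanishes because $\mb{y}_{k+1}$ is $\F_{k+1}$-measurable while Assumption~\ref{o} gives $\E[\g_{k+1}-\nf_{k+1}\mid\F_{k+1}]=\mathbf{0}$. The remaining two inner products are exactly the two cross terms appearing on the right-hand side of the lemma.

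The only subtle point is why the other noise cross term, $\langle(\mb{W}-\mb{J})\mb{y}_{k+1},(\mb{W}-\mb{J})(\nf_k-\g_k)\rangle$, must be retained rather than killed analogously: since $\mb{y}_{k+1}=\mb{W}(\mb{y}_k+\g_k-\g_{k-1})$ depends on $\g_k$, this term is not centered and constitutes a genuine correlation that has to be carried forward. This is precisely the residual that the sharper analysis of the paper later exploits via the unbiasedness of $\g_k$, and keeping track of it carefully is the only real obstacle; the rest is the bookkeeping above.
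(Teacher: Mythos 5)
Your proposal is correct and follows essentially the same route as the paper: unroll the update using $\mb{W}\mb{J}=\mb{J}\mb{W}=\mb{J}$, expand the square, bound the two quadratic terms by $\lambda^2$ via Lemma~\ref{basic}(\ref{W}), and handle the cross term by conditioning on $\F_{k+1}$ so that the centered piece $\g_{k+1}-\nf_{k+1}$ vanishes while the two remaining inner products survive. Your closing observation about why the $\nf_k-\g_k$ term cannot be killed the same way (because $\mb{y}_{k+1}$ depends on $\g_k$) is exactly the point the paper addresses separately in Lemma~\ref{gt2}.
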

\begin{proof}
Using the gradient tracking update~\eqref{y}, and the fact that~$\mb{W} \mb{J} = \mb{J} \mb{W} = \mb{J}$, we have:~$\forall k\geq0$,
\begin{align}\label{gt0}
&\ln\y_{k+2} -\mb{J}\y_{k+2}\rn^2 \n\\
=&~\ln \mb{W}\left(\mb{y}_{k+1} + \g_{k+1} - \g_{k}\right) -\mb{J}\left(\mb{y}_{k+1} + \g_{k+1} - \g_{k}\right)\rn^2 \n\\
=&~\ln \mb{W}\mb{y}_{k+1}-\mb{J}\mb{y}_{k+1} + \left(\mb{W} -\mb{J}\right)\left(\g_{k+1} - \g_{k}\right)\rn^2 \n\\
=&~\ln \mb{W}\mb{y}_{k+1}-\mb{J}\mb{y}_{k+1}\rn^2 +
\ln\left(\mb{W} -\mb{J}\right)\left(\g_{k+1} - \g_{k}\right)\rn^2 \n\\
&+ 2\l \left(\mb{W}-\mb{J}\right)\mb{y}_{k+1},~\left(\mb{W} -\mb{J}\right)\left(\g_{k+1} - \g_{k}\right) \r          \n\\
\leq&~\lambda^2\ln \mb{y}_{k+1}-\mb{J}\mb{y}_{k+1}\rn^2
+\lambda^2\ln\g_{k+1} - \g_{k}\rn^2 \n\\
&+2\underbrace{\l\left(\mb{W}-\mb{J}\right)\mb{y}_{k+1},\left(\mb{W} -\mb{J}\right)\left(\g_{k+1} - \g_{k}\right) \r}_{C_1},
\end{align}
where the last inequality is due to Lemma~\ref{basic}(\ref{W}). Towards~$C_1$, since~$\y_{k+1}$ and~$\g_{k}$ are~$\F_{k+1}$-measurable, we have:~$\forall k\geq0$, 
\begin{align}\label{gt01}
&\E\left[C_1|\F_{k+1}\right] \n\\
=&\l \left(\mb{W}-\mb{J}\right)\mb{y}_{k+1},\left(\mb{W} -\mb{J}\right)\left(\nf_{k+1} - \g_{k}\right) \r \nonumber\\
=&\l \left(\mb{W}-\mb{J}\right)\mb{y}_{k+1},\left(\mb{W} -\mb{J}\right)\left(\nf_{k} - \g_{k}\right) \r  \n\\
&+\l \left(\mb{W}-\mb{J}\right)\mb{y}_{k+1},\left(\mb{W} -\mb{J}\right)\left(\nf_{k+1} - \nf_{k}\right) \r.
\end{align}
The proof then follows by taking the expectation on~\eqref{gt0} and using~\eqref{gt01} in the resulting inequality.
\end{proof}
\noindent
Next, we bound the terms in Lemma~\ref{gt} respectively.
For the second term in Lemma~\ref{gt}, we have the following. 
\begin{lem}\label{gt1}
Let Assumption~\ref{f}-\ref{o} hold. We have:~$\forall k\geq0$,
\begin{align*}
&\E\big[\ln\g_{k+1} - \g_{k}\rn^2\big] \n\\
\leq&~18L^2\E\big[\ln \x_{k} - \mb{J}\x_{k} \rn^2\big] + 6n\a_k^2L^2\E\big[\ln \agk\rn^2\big] \n \\
&+ 12\a_k^2L^2\lambda^2\E\big[\ln\y_{k+1} - \mb{J}\y_{k+1}\rn^2\big] + 3n\nu_a^2.
\end{align*}
\end{lem}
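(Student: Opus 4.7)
The plan is to split the difference $\g_{k+1}-\g_k$ into a fresh martingale increment and a bias term controlled by smoothness, and then reduce the resulting one-step displacement $\|\x_{k+1}-\x_k\|^2$ to consensus and tracking errors via the \DSGT~dynamics.

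First I would write $\g_{k+1}-\g_k = (\g_{k+1}-\nf_{k+1}) + (\nf_{k+1}-\g_k)$. Since $\bxi_{k+1}$ is independent of $\F_{k+1}$ and $\x_{k+1}$ is $\F_{k+1}$-measurable, Assumption~\ref{o} yields $\E[\g_{k+1}-\nf_{k+1}\mid \F_{k+1}] = \mathbf{0}$, while $\nf_{k+1}-\g_k$ is $\F_{k+1}$-measurable. Hence the cross term in $\|\g_{k+1}-\g_k\|^2$ has zero expectation, so $\E[\|\g_{k+1}-\g_k\|^2] = \E[\|\g_{k+1}-\nf_{k+1}\|^2] + \E[\|\nf_{k+1}-\g_k\|^2]$. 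The first summand is at most $n\nu_a^2$ by Lemma~\ref{basic}(\ref{indep}) combined with the bounded variance in Assumption~\ref{o}. For the second summand I split further as $\nf_{k+1}-\g_k = (\nf_{k+1}-\nf_k) + (\nf_k-\g_k)$ and apply $\|a+b\|^2\le 2\|a\|^2+2\|b\|^2$: the first piece is bounded by $L$-smoothness of each $f_i$ and the second by bounded variance, producing $\E[\|\nf_{k+1}-\g_k\|^2] \le 2L^2\E[\|\x_{k+1}-\x_k\|^2] + 2n\nu_a^2$. Combining, the running bound becomes $\E[\|\g_{k+1}-\g_k\|^2] \le 3n\nu_a^2 + 2L^2\E[\|\x_{k+1}-\x_k\|^2]$.

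The remaining task is a tight bound on $\|\x_{k+1}-\x_k\|^2$. I would use the identity $\x_{k+1}-\x_k = (\x_{k+1}-\J\x_{k+1}) - (\x_k-\J\x_k) + (\J\x_{k+1}-\J\x_k)$, noting that~\eqref{ave} together with Lemma~\ref{basic}(\ref{track}) gives $\J\x_{k+1}-\J\x_k = -\a_k\J\y_{k+1}$ and therefore $\|\J\x_{k+1}-\J\x_k\|^2 = n\a_k^2\|\agk\|^2$. Applying $\|a+b+c\|^2\le 3(\|a\|^2+\|b\|^2+\|c\|^2)$ and then invoking the second inequality of Lemma~\ref{cons} on $\|\x_{k+1}-\J\x_{k+1}\|^2$ yields $\|\x_{k+1}-\x_k\|^2 \le (6\lambda^2+3)\|\x_k-\J\x_k\|^2 + 6\a_k^2\lambda^2\|\y_{k+1}-\J\y_{k+1}\|^2 + 3n\a_k^2\|\agk\|^2 \le 9\|\x_k-\J\x_k\|^2 + 6\a_k^2\lambda^2\|\y_{k+1}-\J\y_{k+1}\|^2 + 3n\a_k^2\|\agk\|^2$. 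Multiplying this estimate by $2L^2$ and substituting into the running bound from the previous paragraph reproduces the stated constants $18L^2$, $12\a_k^2 L^2\lambda^2$, $6n\a_k^2L^2$, together with the noise term $3n\nu_a^2$.

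The only delicate point is the orthogonal split in the first step: a crude three-way $\|a+b+c\|^2\le 3(\|a\|^2+\|b\|^2+\|c\|^2)$ expansion of $\g_{k+1}-\g_k$ would double the variance floor to $6n\nu_a^2$ and propagate through as a loss in every other coefficient. Exploiting $\F_{k+1}$-measurability of $\nf_{k+1}-\g_k$ to kill the cross term against the fresh noise $\g_{k+1}-\nf_{k+1}$ is precisely the unbiasedness-based refinement the authors advertise in Section~\ref{sec_GT_general_bounds}; the rest is a mechanical application of Young's inequality, $L$-smoothness, and Lemma~\ref{cons}.
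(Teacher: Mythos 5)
Your proposal is correct and follows essentially the same route as the paper's proof: the orthogonal split $\g_{k+1}-\g_k = (\g_{k+1}-\nf_{k+1})+(\nf_{k+1}-\g_k)$ using $\F_{k+1}$-measurability to kill the cross term, the further split of $\nf_{k+1}-\g_k$ via $L$-smoothness and bounded variance, and the reduction of $\|\x_{k+1}-\x_k\|^2$ through the consensus decomposition and Lemma~\ref{cons}, yielding the identical constants. No gaps.
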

\begin{proof}
Since both~$\nf_{k+1}$ and~$\g_k$ are~$\F_{k+1}$-measurable and $\E[\g_{k+1}|\F_{k+1}] = \nf_{k+1}$, we have:~$\forall k\geq0$,
\begin{align}\label{g10}
&~\E\big[\ln\g_{k+1} - \g_{k}\rn^2\big]\n\\
=&~\E\big[\ln\g_{k+1} -\nf_{k+1}\rn^2\big] + \E\big[\ln \nf_{k+1} - \g_{k}\rn^2\big], \n\\
\leq&~n\nu_a^2 + \E\big[\ln \nf_{k+1} - \g_{k}\rn^2\big] \n\\
\leq&~n\nu_a^2 + 2\E\big[\ln \nf_{k+1} - \nf_{k}\rn^2\big]
+ 2\E\big[\ln \nf_{k} - \g_{k}\rn^2\big] \n\\
\leq&~3n\nu_a^2 + 2L^2\underbrace{\E\big[\ln \x_{k+1} - \x_{k}\rn^2\big]}_{C_2}
\end{align}
where the first inequality uses Assumption~\ref{o} and the last inequality uses Assumption~\ref{o} and the~$L$-smoothness of each~$f_i$.
Towards~$C_2$, we have:~$\forall k\geq0$,
\begin{align}\label{C_2}
C_2
=&~\E\big[\ln \x_{k+1} - \mb{J}\x_{k+1} + \mb{J}\x_{k+1} - \mb{J}\x_{k} + \mb{J}\x_{k}  - \x_{k}\rn^2\big]                       \n\\
\leq&~3\E\big[\ln \x_{k+1} - \mb{J}\x_{k+1} \rn^2\big] + 3n\a_k^2\E\big[\ln \agk\rn^2\big] \n\\
&+ 3\E\big[\ln \x_{k} - \mb{J}\x_{k} \rn^2\big]  \n\\
\leq&~9\E\big[\ln \x_{k} - \mb{J}\x_{k} \rn^2\big] + 3n\a_k^2\E\big[\ln \agk\rn^2\big] \n\\
&+ 6\a_k^2\lambda^2\E\big[\ln\y_{k+1} - \mb{J}\y_{k+1}\rn^2\big],
\end{align}
where the second inequality uses~\eqref{ave} and the last inequality uses Lemma~\ref{cons}. The proof follows by using~\eqref{C_2} in~\eqref{g10}.
\end{proof}
\noindent
For the third term in Lemma~\ref{gt}, we have the following.
\begin{lem}\label{gt2}
Let Assumption~\ref{f}-\ref{o} hold. We have:~$\forall k\geq0$,
\begin{align*}
\E\left[\l \left(\mb{W}-\mb{J}\right)\mb{y}_{k+1},\left(\mb{W} -\mb{J}\right)\left(\nf_{k} - \g_{k}\right) \r \right] \leq \nu_a^2. 
\end{align*}
\end{lem}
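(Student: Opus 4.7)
The plan is to exploit the conditional unbiasedness of $\g_k$ given $\F_k$. The key observation is that in the tracker update $\y_{k+1} = \W(\y_k + \g_k - \g_{k-1})$, the only piece of $\y_{k+1}$ that fails to be $\F_k$-measurable is the term $\W\g_k$, so everything else, when paired against $\nf_k - \g_k$, will vanish upon conditional expectation.

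First, I would condition on $\F_k$ and decompose $\y_{k+1} = \W(\y_k - \g_{k-1}) + \W\g_k$. Since $\y_k$ and $\g_{k-1}$ are both $\F_k$-measurable and $\E[\nf_k - \g_k \mid \F_k] = \mathbf{0}$ by Assumption~\ref{o}, the cross term $\E\left[\l(\W-\J)\W(\y_k - \g_{k-1}), (\W-\J)(\nf_k - \g_k)\r \mid \F_k\right]$ equals zero. What remains is $\E\left[\l(\W-\J)\W\g_k, (\W-\J)(\nf_k - \g_k)\r \mid \F_k\right]$, which I would simplify by writing $\g_k = \nf_k + \mathbf{e}_k$ with $\mathbf{e}_k := \g_k - \nf_k$. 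The $\nf_k$ component again vanishes conditionally against $\nf_k - \g_k = -\mathbf{e}_k$, leaving precisely the compact quadratic form $-\E\left[\l(\W-\J)\W\mathbf{e}_k, (\W-\J)\mathbf{e}_k\r \mid \F_k\right]$.

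The final step bounds this quadratic form. Since $\W$ is doubly stochastic, a one-line Jensen argument gives $\|\W\| \leq 1$, and combined with Lemma~\ref{basic}(\ref{W}) this implies $\|(\W-\J)\W\| \leq \lambda$. Cauchy--Schwarz then yields $|\l(\W-\J)\W\mathbf{e}_k, (\W-\J)\mathbf{e}_k\r| \leq \lambda^2 \|\mathbf{e}_k\|^2$, and taking full expectation under Assumption~\ref{o} together with the across-node independence in Lemma~\ref{basic}(\ref{indep}) gives $\E[\|\mathbf{e}_k\|^2] = \sum_{i=1}^n \E\bigl[\|\g_i(\x_k^i,\bxi_k^i) - \nabla f_i(\x_k^i)\|^2\bigr] \leq \sum_i \nu_i^2 = n\nu_a^2$, from which the stated upper bound follows.

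The main obstacle is avoiding a naive appeal to independence between $\y_{k+1}$ and $\g_k$: the tracker update explicitly uses $\g_k$, so one must surgically split off the single $\W\g_k$ piece of $\y_{k+1}$ and compute its conditional cross expectation with $\nf_k - \g_k$ directly, rather than treating $\y_{k+1}$ as predictable. Once this isolation is done, the problem reduces to bounding an expected quadratic form in the zero-mean error $\mathbf{e}_k$, which is routine.
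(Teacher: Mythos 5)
Your reduction of the left-hand side to the quadratic form $-\E\big[\l(\W-\J)\W\mb{e}_k,(\W-\J)\mb{e}_k\r\,\big|\,\F_k\big]$ with $\mb{e}_k:=\g_k-\nf_k$ is correct and is essentially the same decomposition the paper uses (the paper writes the identical quantity as $\E[(\g_k-\nf_k)^\top(\J-\W^\top\W^2)(\g_k-\nf_k)\,|\,\F_k]$). The gap is in your last step. Bounding the quadratic form by Cauchy--Schwarz gives $\lambda^2\|\mb{e}_k\|^2$ and hence, after taking expectation, $\lambda^2 n\nu_a^2$ --- not $\nu_a^2$. Since $\lambda$ may be arbitrarily close to $1$ and $n$ is arbitrary, $\lambda^2 n\nu_a^2$ does not imply the stated bound; you have lost a factor of $n$. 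The across-node independence of Lemma~\ref{basic}(\ref{indep}) is not actually used in your argument where it matters: computing $\E[\|\mb{e}_k\|^2]=\sum_i\E[\|\mb{e}_k^i\|^2]$ is just the definition of the norm of a concatenated vector and needs no independence.

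The missing idea is to apply Lemma~\ref{basic}(\ref{indep}) \emph{inside} the quadratic form: because the per-node noise blocks $\mb{e}_k^i$ are conditionally zero-mean and independent across nodes, all off-diagonal (cross-node) terms of $\E[\mb{e}_k^\top(\J-\W^\top\W^2)\mb{e}_k\,|\,\F_k]$ vanish, so only $\mbox{diag}(\J-\W^\top\W^2)$ survives. The diagonal of $\J$ is $\tfrac{1}{n}\I_{np}$ and the diagonal of $\W^\top\W^2$ is entrywise nonnegative (as $\W$ is a nonnegative matrix), whence the conditional expectation is at most $\tfrac{1}{n}\E[\|\mb{e}_k\|^2\,|\,\F_k]\le\tfrac{1}{n}\cdot n\nu_a^2=\nu_a^2$. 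It is the $\tfrac{1}{n}$ from $\mbox{diag}(\J)$ that produces the claimed constant; a norm bound on the full matrix cannot see it. (As an aside, the weaker bound $\lambda^2 n\nu_a^2$ would in fact still be absorbed into the $5n\nu_a^2$ term in the proof of Lemma~\ref{GT_final}, but it does not prove the lemma as stated.)
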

\begin{proof}
Using the fact that~$\mb{J}(\mb{W}-\mb{J}) =\mb{O}_{np}$ and the gradient tracking update~\eqref{y}, we have:~$\forall k\geq0$, 
\begin{align}\label{gt21}
&\E\left[\l \left(\mb{W}-\mb{J}\right)\mb{y}_{k+1},\left(\mb{W} -\mb{J}\right)\left(\nf_{k} - \g_{k}\right) \r |\F_k\right] \n\\
=&~\E\left[\l\mb{W}\mb{y}_{k+1},\left(\mb{W} -\mb{J}\right)\left(\nf_{k} - \g_{k}\right) \r |\F_k\right] \n\\
=&~\E\left[\l\mb{W}^2\left(\y_{k} + \g_{k}- \g_{k-1}\right),
\left(\mb{W} -\mb{J}\right)\left(\nf_{k} - \g_{k}\right) \r | \F_k\right] \n\\
=&~\E\left[\l\mb{W}^2\g_{k},
\left(\mb{W} -\mb{J}\right)\left(\nf_{k} - \g_{k}\right) \r | \F_k\right] \n\\
=&~\E\left[\l \mb{W}^2\left(\g_{k}-\nf_k\right),\left(\mb{W} -\mb{J}\right)\left(\nf_{k} - \g_{k}\right) \r |\F_k \right] \n\\
=&~\E\left[(\g_{k} - \nf_{k})^\top (\mb{J}-\mb{W}^\top \mb{W}^2)\left(\g_{k}-\nf_{k}\right)|\F_k\right],
\end{align}
where the third and the fourth equality exploit the fact that the random vectors $\y_k$, $\g_{k-1}$ and $\nf_k$ are~$\F_k$-measurable and that~$\E[\g_{k}|\F_k] = \nf_{k}$.
In light of Lemma~\ref{basic}(\ref{indep}),~\eqref{gt21} reduces to
\begin{align}\label{gt22}
&\E\left[\l \left(\mb{W}-\mb{J}\right)\mb{y}_{k+1},\left(\mb{W} -\mb{J}\right)\left(\nf_{k} - \g_{k}\right) \r |\F_k\right] \n\\    
=&~\E\left[(\g_{k} - \nf_{k})^\top \mbox{diag}(\mb{J}-\mb{W}^\top \mb{W}^2)\left(\g_{k}-\nf_{k}\right)|\F_k\right] 
\n\\
\leq&~\E\left[(\g_{k} - \nf_{k})^\top \mbox{diag}(\mb{J})\left(\g_{k}-\nf_{k}\right)|\F_k\right], \n\\
=&~\E\left[\|\g_{k}-\nf_{k}\|^2|\F_k\right]/n
\end{align}
where the inequality holds since~$\mbox{diag}(\mb{W}^\top \mb{W}^2)$ is nonnegative. The proof follows by using Assumption~\ref{o} in~\eqref{gt22} and taking the expectation on the resulting inequality.
\end{proof}

For the last term in Lemma~\ref{gt}, we have the following.
\begin{lem}\label{gt3}
Let Assumption~\ref{f}-\ref{o} hold. We have:~$\forall k\geq0$,
\begin{align*}
&\l \left(\mb{W}-\mb{J}\right)\mb{y}_{k+1},\left(\mb{W} -\mb{J}\right)\left(\nf_{k+1} - \nf_{k}\right) \r \n \\
\leq&~(\lambda\a_k L + 0.5\eta_1  + \eta_2)\lambda^2\ln\mb{y}_{k+1}-\mb{J}\mb{y}_{k+1}\rn^2\n\\
&+ \eta_2^{-1}\lambda^2L^2\ln\x_{k}-\mb{J}\x_{k}\rn^2 
+ 0.5\eta_1^{-1}\lambda^2 \a_k^2 L^2 n\ln\agk\rn^2,
\end{align*}
where~$\eta_1$ and~$\eta_2$ are arbitrary positive constants\footnote{We note that~$\eta_1$ and~$\eta_2$ will be fixed later.}.
\end{lem}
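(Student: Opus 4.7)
My plan is to bound the inner product pathwise via Cauchy--Schwarz, then algebraically decompose the state increment $\x_{k+1}-\x_k$ into three geometrically meaningful pieces, and finally apply two weighted Young's inequalities tuned so that the resulting coefficients match the claim term-by-term. The argument is entirely deterministic; no conditional expectation is needed.

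First, I would apply Cauchy--Schwarz, Lemma~\ref{basic}(\ref{W}) on each argument of the inner product, and the componentwise $L$-smoothness of the $f_i$'s to obtain
\begin{align*}
\l(\mb{W}-\mb{J})\y_{k+1},(\mb{W}-\mb{J})(\nf_{k+1}-\nf_k)\r \leq \lambda^2 L\,\|\y_{k+1}-\mb{J}\y_{k+1}\|\,\|\x_{k+1}-\x_k\|.
\end{align*}
Next, using the \DSGT update $\x_{k+1}=\mb{W}(\x_k-\a_k\y_{k+1})$ together with $\mb{W}\mb{J}=\mb{J}$ and the splitting $\y_{k+1}=(\y_{k+1}-\mb{J}\y_{k+1})+\mb{J}\y_{k+1}$, I decompose
\begin{align*}
\x_{k+1}-\x_k=(\mb{W}-\mb{I})(\x_k-\mb{J}\x_k)-\a_k\mb{W}(\y_{k+1}-\mb{J}\y_{k+1})-\a_k\mb{J}\y_{k+1}.
\end{align*}
Combining the triangle inequality with $\|\mb{W}\|\leq 1$ (which yields $\|(\mb{W}-\mb{I})(\x_k-\mb{J}\x_k)\|\leq 2\|\x_k-\mb{J}\x_k\|$), Lemma~\ref{basic}(\ref{W}) (which gives $\|\mb{W}(\y_{k+1}-\mb{J}\y_{k+1})\|\leq\lambda\|\y_{k+1}-\mb{J}\y_{k+1}\|$ since $\mb{W}\mb{J}(\y_{k+1}-\mb{J}\y_{k+1})=\mb{0}$), and Lemma~\ref{basic}(\ref{track}) (which identifies $\|\mb{J}\y_{k+1}\|=\sqrt{n}\|\agk\|$), I obtain
\begin{align*}
\|\x_{k+1}-\x_k\|\leq 2\|\x_k-\mb{J}\x_k\|+\a_k\lambda\|\y_{k+1}-\mb{J}\y_{k+1}\|+\a_k\sqrt{n}\|\agk\|.
\end{align*}

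Substituting this into the Cauchy--Schwarz bound yields three summands. The pure $\|\y_{k+1}-\mb{J}\y_{k+1}\|^2$ summand directly produces the $\lambda\a_k L\cdot\lambda^2\|\y_{k+1}-\mb{J}\y_{k+1}\|^2$ term in the claim. The cross term $2\lambda^2 L\|\y_{k+1}-\mb{J}\y_{k+1}\|\|\x_k-\mb{J}\x_k\|$ is split by Young's inequality with weight $\eta_2/L$, producing exactly $\eta_2\lambda^2\|\y_{k+1}-\mb{J}\y_{k+1}\|^2+\eta_2^{-1}\lambda^2 L^2\|\x_k-\mb{J}\x_k\|^2$. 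The remaining cross term $\lambda^2\a_k L\sqrt{n}\|\y_{k+1}-\mb{J}\y_{k+1}\|\|\agk\|$ is split by Young's inequality with weight $\eta_1/(\a_k L\sqrt{n})$, producing $0.5\eta_1\lambda^2\|\y_{k+1}-\mb{J}\y_{k+1}\|^2+0.5\eta_1^{-1}\lambda^2\a_k^2 L^2 n\|\agk\|^2$. Summing the three contributions delivers the stated inequality.

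The only delicate point is choosing the Young's weights so that $\eta_1,\eta_2$ appear with exactly the indicated coefficients: the factor $2$ in the bound on $\|(\mb{W}-\mb{I})(\x_k-\mb{J}\x_k)\|$ is precisely what makes the weight $\eta_2/L$ yield $\eta_2\lambda^2$ on one side and $\eta_2^{-1}\lambda^2 L^2$ on the other, while the $\sqrt{n}$ factor in $\|\mb{J}\y_{k+1}\|$ is what makes the weight $\eta_1/(\a_k L\sqrt{n})$ produce the $\a_k^2 L^2 n$ factor. I would therefore keep the weights symbolic until the final coefficient-matching step. Beyond this bookkeeping there is no genuine obstacle: the lemma is a purely deterministic Cauchy--Young argument, using only Lemma~\ref{basic}(\ref{W}), Lemma~\ref{basic}(\ref{track}), and $L$-smoothness of each $f_i$.
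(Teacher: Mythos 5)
Your proof is correct and follows essentially the same route as the paper's: Cauchy--Schwarz combined with $\|\mb{W}-\mb{J}\|=\lambda$ and $L$-smoothness to reduce the inner product to $\lambda^2 L\|\y_{k+1}-\mb{J}\y_{k+1}\|\,\|\x_{k+1}-\x_k\|$, the identical three-term bound $\|\x_{k+1}-\x_k\|\leq 2\|\x_k-\mb{J}\x_k\|+\a_k\sqrt{n}\|\agk\|+\a_k\lambda\|\y_{k+1}-\mb{J}\y_{k+1}\|$, and the same two Young's inequalities with parameters $\eta_1,\eta_2$. The only cosmetic difference is that you obtain the intermediate bound by decomposing the update $\x_{k+1}=\mb{W}(\x_k-\a_k\y_{k+1})$ directly, whereas the paper inserts $\mb{J}\x_{k+1}$ and $\mb{J}\x_k$ and invokes Lemma~\ref{cons}; both yield the same constants.
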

\begin{proof}
Using~$(\mb{W}-\mb{J})\mb{J} = \mb{O}_{np}$ and the Cauchy-Schwarz inequality, we have:~$\forall k\geq0$,
\begin{align}\label{g31}
&\l \left(\mb{W}-\mb{J}\right)\mb{y}_{k+1},\left(\mb{W} -\mb{J}\right)\left(\nf_{k+1} - \nf_{k}\right) \r \n    \\
=&~\l\left(\mb{W}-\mb{J}\right)(\mb{y}_{k+1}-\mb{J}\mb{y}_{k+1}),\left(\mb{W} -\mb{J}\right)\left(\nf_{k+1} - \nf_{k}\right) \r \n\\ 
\leq&~\lambda^2L\ln\mb{y}_{k+1}-\mb{J}\mb{y}_{k+1}\rn\ln\x_{k+1}-\x_{k}\rn,
\end{align}
where the last inequality uses~$\|\mb{W}-\mb{J}\| = \lambda$ and the~$L$-smoothness of each~$f_i$. We note that,~$\forall k\geq0$,
\begin{align}\label{x_diff}
&\ln\x_{k+1}-\x_{k}\rn \n\\
=&~\ln\x_{k+1} - \mb{J}\x_{k+1} + \mb{J}\x_{k+1} - \mb{J}\x_{k} + \mb{J}\x_{k} -\x_{k}\rn \n\\
\leq&~\ln\x_{k+1}-\mb{J}\x_{k+1}\rn + \a_k\sqrt{n}\ln\agk\rn + \ln\x_{k}-\mb{J}\x_{k}\rn \n\\
\leq&~
2\ln\x_{k}-\mb{J}\x_{k}\rn + \a_k\sqrt{n}\ln\agk\rn + \a_k\lambda\ln\y_{k+1}-\mb{J}\y_{k+1}\rn.
\end{align}
where the last inequality uses Lemma~\ref{cons}. We use~\eqref{x_diff} in~\eqref{g31} to obtain:~$\forall k\geq0$,
\begin{align}\label{g311}
&\l \left(\mb{W}-\mb{J}\right)\mb{y}_{k+1},\left(\mb{W} -\mb{J}\right)\left(\nf_{k+1} - \nf_{k}\right) \r \n    \\
\leq&~\lambda^3\a_k L\ln\mb{y}_{k+1}-\mb{J}\mb{y}_{k+1}\rn^2 \n\\
&+ \underbrace{\left(\lambda \ln\mb{y}_{k+1}-\mb{J}\mb{y}_{k+1}\rn\right)\left(\lambda\a_k L\sqrt{n}\ln\agk\rn\right)}_{C_3} \n\\
&+ \underbrace{2(\lambda \ln\mb{y}_{k+1}-\mb{J}\mb{y}_{k+1}\rn)(\lambda L\ln\x_{k}-\mb{J}\x_{k}\rn)}_{C_4}.
\end{align}
By Young's inequality, we have that
\begin{align*}
C_3 \leq 0.5\eta_1\lambda^2\ln\mb{y}_{k+1}-\mb{J}\mb{y}_{k+1}\rn^2
+ 0.5\eta_1^{-1}\lambda^2 \a_k^2 L^2 n\ln\agk\rn^2,
\end{align*}
where~$\eta_1>0$ is arbitrary,
and that,
\begin{align*}
C_4 \leq \eta_2\lambda^2\ln\mb{y}_{k+1}-\mb{J}\mb{y}_{k+1}\rn^2 + \eta_2^{-1}\lambda^2L^2\ln\x_{k}-\mb{J}\x_{k}\rn^2,
\end{align*}
where~$\eta_2>0$ is arbitrary. The proof follows by Using the bounds on~$C_3$ and~$C_4$ in~\eqref{g311}.
\end{proof}
\noindent
With the help of auxiliary Lemmas~\ref{gt1}-\ref{gt3}, we now prove an upper bound on the gradient tracking error.
\begin{lem}\label{GT_final}
Let Assumption~\ref{f}-\ref{o} hold. If~$0<\a_k\leq\frac{1-\lambda^2}{24\lambda L}$, then we have:~$\forall k\geq0$,    
\begin{align*}
\E\bigg[\frac{\ln\y_{k+2} -\mb{J}\y_{k+2}\rn^2}{nL^2} \bigg] 
\leq&~\dfrac{1+\lambda^2}{2}\E\bigg[\frac{\ln\y_{k+1} -\mb{J}\y_{k+1}\rn^2}{nL^2} \bigg]  \n\\
&+\dfrac{24\lambda^2}{1-\lambda^2}\E\bigg[\frac{\ln\x_{k}-\mb{J}\x_{k}\rn^2}{n}\bigg] \nonumber\\ 
&+\frac{6\lambda^2\a_k^2}{1-\lambda^2}\E\big[\ln\ol{\nf}_{k}\rn^2\big] + \frac{6\nu_a^2}{L^2}.
\end{align*}
\end{lem}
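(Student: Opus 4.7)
The plan is to substitute the three auxiliary bounds (Lemmas~\ref{gt1},~\ref{gt2},~\ref{gt3}) into the master inequality of Lemma~\ref{gt} after taking an expectation, then collect like terms into the three non-trivial buckets $\E[\|\y_{k+1}-\J\y_{k+1}\|^2]$, $\E[\|\x_k-\J\x_k\|^2]$, and $\E[\|\agk\|^2]$, plus the constant noise contribution. The target form places coefficient $\tfrac{1+\lambda^2}{2}$ on the gradient tracking error, coefficient $\tfrac{24\lambda^2 L^2}{1-\lambda^2}$ (after normalization by $nL^2$) on the consensus error, and coefficient $\tfrac{6n\lambda^2\a_k^2L^2}{1-\lambda^2}$ on $\E[\|\ol{\nf}_k\|^2]$, so the heart of the proof is a bookkeeping exercise: choose $\eta_1,\eta_2$ in Lemma~\ref{gt3} small enough that the extra $\y$-coefficient absorbed into Lemma~\ref{gt} plus the $\lambda^2\a_k L$ and $\lambda^4\a_k^2L^2$ terms fit under the $\tfrac{1-\lambda^2}{2}$ budget after subtracting the leading $\lambda^2$.

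Concretely, I would first aggregate: Lemma~\ref{gt} contributes $\lambda^2$ on the $\y$-bucket; Lemma~\ref{gt1} (multiplied by $\lambda^2$) contributes $12\lambda^4\a_k^2L^2$; Lemma~\ref{gt3} (multiplied by $2$) contributes $2\lambda^2(\lambda\a_k L+0.5\eta_1+\eta_2)$. So the $\y$-coefficient equals $\lambda^2+12\lambda^4\a_k^2L^2+2\lambda^3\a_k L+\lambda^2(\eta_1+2\eta_2)$. Choosing for instance $\eta_1+2\eta_2\le \tfrac{1-\lambda^2}{4\lambda^2}$ and invoking $\a_k\le \tfrac{1-\lambda^2}{24\lambda L}$ to bound $2\lambda^3\a_k L\le \tfrac{\lambda^2(1-\lambda^2)}{12}$ and $12\lambda^4\a_k^2L^2\le \tfrac{\lambda^2(1-\lambda^2)^2}{48}$, the excess over $\lambda^2$ is under $\tfrac{1-\lambda^2}{2}$, giving the required $\tfrac{1+\lambda^2}{2}$.

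For the $\x$-consensus bucket, Lemma~\ref{gt1} contributes $18\lambda^2L^2$ and Lemma~\ref{gt3} contributes $2\eta_2^{-1}\lambda^2L^2$; the choice of $\eta_2$ (roughly of order $1-\lambda^2$) makes the sum bounded by $\tfrac{24\lambda^2L^2}{1-\lambda^2}$. For the $\agk$ bucket, the coefficients $6n\lambda^2\a_k^2L^2$ from Lemma~\ref{gt1} and $\eta_1^{-1}n\lambda^2\a_k^2L^2$ from Lemma~\ref{gt3} are combined, and then the identity from~\eqref{g_ave}, $\E[\|\agk\|^2]\le \E[\|\ol{\nf}_k\|^2]+\nu_a^2/n$, converts this into a clean $\ol{\nf}_k$ term with an additional $\mc{O}(\a_k^2\lambda^2L^2\nu_a^2/n)$ constant (absorbed into the noise bucket). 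Finally, Lemma~\ref{gt2} contributes $2\nu_a^2$ to the constant bucket, while Lemma~\ref{gt1} adds $3n\lambda^2\nu_a^2$; once normalized by $nL^2$ and verified against the coarse bound $6\nu_a^2/L^2$, all residual constants fit.

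The main obstacle is not any single inequality but making sure the two Young-inequality parameters $\eta_1,\eta_2$ can be simultaneously chosen to meet the $\y$-coefficient constraint, the $\x$-coefficient constraint, and the $\agk$-coefficient constraint without forcing a stricter step-size than the stated $\a_k\le \tfrac{1-\lambda^2}{24\lambda L}$. I expect that a symmetric choice such as $\eta_1=\eta_2=\tfrac{1-\lambda^2}{12\lambda^2}$ (up to small adjustments to keep the $\agk$-coefficient matching $\tfrac{6}{1-\lambda^2}$ after using $6+\eta_1^{-1}\le \tfrac{6}{1-\lambda^2}$) works cleanly; the last step is simply to divide through by $nL^2$ to match the normalized form of the claim.
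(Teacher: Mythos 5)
Your proposal follows essentially the same route as the paper's proof: substitute Lemmas~\ref{gt1}--\ref{gt3} into Lemma~\ref{gt}, convert $\E[\|\agk\|^2]$ to $\E[\|\ol{\nf}_k\|^2]+\nu_a^2/n$ via~\eqref{g_ave}, and tune $\eta_1,\eta_2$ of order $(1-\lambda^2)/\lambda^2$ so that the $\y$-, $\x$-, and $\agk$-coefficients all fit under the stated step-size restriction. The only difference is that the paper uses the asymmetric choice $\eta_1=\tfrac{1-\lambda^2}{6\lambda^2}$, $\eta_2=\tfrac{1-\lambda^2}{12\lambda^2}$ (so that $6+\eta_1^{-1}=\tfrac{6}{1-\lambda^2}$ holds exactly), which is precisely the ``small adjustment'' to your symmetric choice that you anticipated would be needed.
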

\begin{proof}
We apply the upper bounds in Lemma~\ref{gt1},~\ref{gt2} and~\ref{gt3} to Lemma~\ref{gt} to obtain:~$\forall k\geq0,\forall\eta_1>0,\forall\eta_2>0$,
\begin{align}\label{GT1}
&\E\big[\ln\y_{k+2} -\mb{J}\y_{k+2}\rn^2 \big] \n\\
\leq&~\lambda^2(1 + 12\lambda^2\a_k^2L^2 + 2\lambda\a_k L + \eta_1 + 2\eta_2)\n\\
&\qquad\times \E\big[\ln \mb{y}_{k+1}-\mb{J}\mb{y}_{k+1}\rn^2\big] \n\\
&+ (3\lambda^2 n + 2)\nu_a^2  \n\\
&+\left(18+2\eta_2^{-1}\right)\lambda^2L^2\E\big[\ln\x_{k}-\mb{J}\x_{k}\rn^2\big] \n\\
&+ \left(6 + \eta_1^{-1}\right)\lambda^2\a_k^2L^2n\E\big[\ln\agk\rn^2\big]. 
\end{align}
We set~$\eta_1 = \frac{1-\lambda^2}{6\lambda^2}$ and~$\eta_2 = \frac{1-\lambda^2}{12\lambda^2}$ in~\eqref{GT1}. It is straightforward to verify that if~$0<\a_k\leq\frac{1-\lambda^2}{24\lambda^2 L},\forall k\geq0$, then we have:
\begin{align}\label{eta}
\lambda^2(1 + 12\lambda^2\a_k^2L^2 + 2\lambda\a_k L + \eta_1 + 2\eta_2) \leq \frac{1+\lambda^2}{2}.
\end{align}
Moreover, recall from~\eqref{g_ave} that
\begin{align}\label{GT2}
\E\big[\|\agk\|^2\big]
\leq \E\big[\|\ol{\nf}_{k}\|^2\big] + \nu_a^2/n.
\end{align}
Using~\eqref{eta},~\eqref{GT2},~$\eta_1 = \frac{1-\lambda^2}{6\lambda^2}$ and~$\eta_2 = \frac{1-\lambda^2}{12\lambda^2}$ in~\eqref{GT1}, we have: if~$0<\a_k\leq\frac{1-\lambda^2}{24\lambda^2 L}$, then
\begin{align*}
&\E\big[\|\y_{k+2} -\mb{J}\y_{k+2}\|^2 \big] \n\\
\leq&~\dfrac{1+\lambda^2}{2}\E\big[\| \mb{y}_{k+1}-\mb{J}\mb{y}_{k+1}\|^2\big] + \left( \dfrac{6\lambda^2\a_k^2L^2}{1-\lambda^2} + 5n\right)\nu_a^2  \n\\
&+\dfrac{24\lambda^2L^2}{1-\lambda^2}\E\big[\|\x_{k}-\mb{J}\x_{k}\|^2\big] + \dfrac{6\lambda^2\a_k^2L^2n}{1-\lambda^2}\E\big[\|\ol{\nf}_{k}\|^2\big].
\end{align*}
The proof follows by~$\frac{6\lambda^2\a_k^2L^2}{1-\lambda^2}\leq1$ if~$0<\a_k\leq\frac{1-\lambda^2}{24\lambda L},\forall k$.
\end{proof}

\subsection{LTI dynamics}\label{sec_ncvx_proof}
In this subsection, we establish the convergence rate of \textbf{\texttt{GT-DSGD}} for general smooth non-convex functions under an appropriate constant step-size such that~$\a_k = \a, \forall k\geq0$.
To this end, we now jointly write Lemma~\ref{cons} and~\ref{GT_final} in the following linear-time-invariant system that characterizes the convergence of consensus and gradient tracking process. 
\begin{prop}\label{LTI_ncvx}
Let Assumption~\ref{f}-\ref{o} hold. If~$0<\a\leq\frac{1-\lambda^2}{24\lambda L}$, then we have the following (entry-wise) matrix-vector inequality hold:~$\forall k\geq0$, 
\begin{align}\label{lti_ncvx}
\mb{u}_{k+1} \leq \mb{G}\mb{u}_{k} + \mb{b}_{k},    
\end{align}
where the state vector~$\mb{u}_k\in\R^2$, the system matrix~$\mb{G}\in\R^{2\times 2}$ and the perturbation vector~$\mb{b}_k\in\R^2$ are given by
\begin{align*}
&\mb{u}_k \!=\! \begin{bmatrix}
\mbb{E}\bigg[\dfrac{\left\|\mb{x}_{k}-\mb{J}\mb{x}_{k}\right\|^2}{n}\bigg]\\[2ex]
\mbb{E}\bigg[\dfrac{\left\|\mb{y}_{k+1}-\mb{J}\mb{y}_{k+1}\right\|^2}{nL^2}\bigg]
\end{bmatrix},
\mb{G} \!=\! \begin{bmatrix}
\dfrac{1+\lambda^2}{2}  & \dfrac{2\a^2\lambda^2L^2}{1-\lambda^2} \\[2ex]
\dfrac{24\lambda^2}{1-\lambda^2}  & \dfrac{1+\lambda^2}{2}
\end{bmatrix}, \n\\
&\mb{b}_k \!=\! \begin{bmatrix}
0\\
\dfrac{6\lambda^2\a^2}{1-\lambda^2}\mathbb{E}\left[\left\|\ol{\nf}_{k}\right\|^2\right] + \dfrac{6\nu_a^2}{L^2}
\end{bmatrix}.
\end{align*}
\end{prop}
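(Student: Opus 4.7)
The plan is to assemble the matrix-vector inequality~\eqref{lti_ncvx} by stacking two scalar recursions: one for the consensus error on the state~$\x_k$ and one for the gradient tracking error on~$\y_{k+1}$, both already derived above. No new estimate is needed; the task is essentially bookkeeping with consistent normalization, so the routine work is to ensure that the $1/n$ and $1/(nL^2)$ scalings line up correctly with the entries of~$\mb{G}$.

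First, I would obtain the top row of~\eqref{lti_ncvx}. Starting from the first inequality in Lemma~\ref{cons} with $\a_k=\a$, I would divide both sides by $n$ and take total expectations, yielding
\begin{align*}
\E\!\bigg[\frac{\|\x_{k+1}-\J\x_{k+1}\|^2}{n}\bigg]
\leq \frac{1+\lambda^2}{2}\,\E\!\bigg[\frac{\|\x_{k}-\J\x_{k}\|^2}{n}\bigg]
+\frac{2\a^2\lambda^2 L^2}{1-\lambda^2}\,\E\!\bigg[\frac{\|\y_{k+1}-\J\y_{k+1}\|^2}{nL^2}\bigg],
\end{align*}
where the factor $L^2$ in the denominator on the right is compensated by the~$L^2$ multiplying~$\a^2\lambda^2$ in the coefficient. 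This is exactly the first coordinate of $\mb{G}\mb{u}_k$ (the top entry of $\mb{b}_k$ is zero since no stochastic noise enters the state consensus recursion directly at this level).

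Next, I would read off the second row from Lemma~\ref{GT_final}, whose conclusion,
\begin{align*}
\E\!\bigg[\frac{\|\y_{k+2}-\J\y_{k+2}\|^2}{nL^2}\bigg]
\leq \frac{1+\lambda^2}{2}\,\E\!\bigg[\frac{\|\y_{k+1}-\J\y_{k+1}\|^2}{nL^2}\bigg]
+\frac{24\lambda^2}{1-\lambda^2}\E\!\bigg[\frac{\|\x_{k}-\J\x_{k}\|^2}{n}\bigg]
+\frac{6\lambda^2\a^2}{1-\lambda^2}\E\!\big[\|\ol{\nf}_k\|^2\big]+\frac{6\nu_a^2}{L^2},
\end{align*}
is already in the normalized form needed for the second coordinate of $\mb{u}_{k+1}$. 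The two deterministic terms on the right are the second row of $\mb{G}\mb{u}_k$, and the two remaining terms constitute the second entry of $\mb{b}_k$.

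Finally I would note the step-size condition. Lemma~\ref{cons} requires nothing on~$\a$, whereas Lemma~\ref{GT_final} requires $0<\a\leq(1-\lambda^2)/(24\lambda L)$, which is exactly the hypothesis of the proposition; hence both inequalities are valid simultaneously. Stacking them yields the entry-wise inequality~\eqref{lti_ncvx}. There is no genuine obstacle here beyond the clerical check that the $1/n$ scaling in the consensus bound and the $1/(nL^2)$ scaling in the gradient tracking bound are absorbed consistently into the entries of $\mb{G}$; once the normalization is fixed, the result follows immediately.
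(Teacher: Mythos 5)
Your proposal is correct and matches the paper's (implicit) argument exactly: the paper introduces Proposition~\ref{LTI_ncvx} precisely as the stacking of the first inequality of Lemma~\ref{cons} (normalized by $n$, with the $L^2$ absorbed into the $(1,2)$ entry of $\mb{G}$) and Lemma~\ref{GT_final}, under the step-size condition inherited from the latter. No gaps.
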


\noindent
In light of Proposition~\ref{LTI_ncvx}, we first solve the range of~$\a$ such that~$\rho(\mb G)<1$, using the following lemma from~\cite{book_matrix_analysis}.
\begin{lem}\label{rho_bound}
Let~$\mb X\in\mathbb{R}^{d\times d}$ be a non-negative matrix and~$\mb{x}\in\mathbb{R}^d$ be a positive vector. If~$\mb{X}\mb{x}<\mb{x}$, then~$\rho(\mb X)<1$. Moreover, if~$\mb{X x}\leq z\mb{x}$, for some~$z>0$,
then $\rho(\mb X)\leq z$.
\end{lem}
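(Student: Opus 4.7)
The plan is to prove this as a two-step Perron--Frobenius style argument: first establish the weak inequality $\rho(\mb{X})\leq z$ from $\mb{X}\mb{x}\leq z\mb{x}$, and then deduce the strict part $\rho(\mb{X})<1$ by reducing it to the weak case with a suitably chosen $z<1$.

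For the second (weak) statement, I would exploit the fact that non-negativity of $\mb{X}$ preserves entry-wise inequalities under left multiplication. Concretely, multiplying $\mb{X}\mb{x}\leq z\mb{x}$ by $\mb{X}$ gives $\mb{X}^{2}\mb{x}\leq z\mb{X}\mb{x}\leq z^{2}\mb{x}$, and by induction
\[
\mb{X}^{k}\mb{x}\leq z^{k}\mb{x},\qquad \forall k\geq 1.
\]
Since $\mb{x}$ is strictly positive, this vector inequality translates into an entry-wise matrix bound: $[\mb{X}^{k}]_{ij}[\mb{x}]_{j}\leq [\mb{X}^{k}\mb{x}]_{i}\leq z^{k}[\mb{x}]_{i}$, i.e. $[\mb{X}^{k}]_{ij}\leq z^{k}[\mb{x}]_{i}/[\mb{x}]_{j}$. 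Summing over $j$ yields $\|\mb{X}^{k}\|_{\infty}\leq C\, z^{k}$, where $C:=\max_{i}[\mb{x}]_{i}\cdot \sum_{j}[\mb{x}]_{j}^{-1}$ depends only on $\mb{x}$. Applying Gelfand's spectral radius formula $\rho(\mb{X})=\lim_{k\to\infty}\|\mb{X}^{k}\|^{1/k}$ then gives $\rho(\mb{X})\leq z$.

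For the first (strict) statement, observe that since the dimension is finite and $\mb{x}$ is strictly positive, I can set
\[
z:=\max_{1\leq i\leq d}\frac{[\mb{X}\mb{x}]_{i}}{[\mb{x}]_{i}}.
\]
The strict hypothesis $\mb{X}\mb{x}<\mb{x}$ together with $d<\infty$ guarantees $z<1$, and by construction $\mb{X}\mb{x}\leq z\mb{x}$ entry-wise. Applying the weak part then yields $\rho(\mb{X})\leq z<1$.

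The argument is essentially routine; the only point requiring a bit of care is the translation from the vector inequality $\mb{X}^{k}\mb{x}\leq z^{k}\mb{x}$ to a matrix norm bound, where the positivity of $\mb{x}$ and finiteness of the dimension are both crucial. Alternatively, one may invoke directly the Collatz--Wielandt characterization $\rho(\mb{X})=\min_{\mb{v}>0}\max_{i}[\mb{X}\mb{v}]_{i}/[\mb{v}]_{i}$, which makes both conclusions immediate, but the induction plus Gelfand route above is self-contained and avoids citing further spectral theory.
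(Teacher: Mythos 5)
Your proof is correct. There is nothing in the paper to compare it against: Lemma~\ref{rho_bound} is stated as a known fact imported from the matrix-analysis reference (it is essentially the upper Collatz--Wielandt bound), and the paper supplies no proof of its own. Your argument is the standard self-contained one: the induction $\mb{X}^k\mb{x}\leq z^k\mb{x}$ is valid because left-multiplication by a non-negative matrix preserves entrywise inequalities; the passage to $[\mb{X}^k]_{ij}\leq z^k[\mb{x}]_i/[\mb{x}]_j$ correctly uses both non-negativity of the entries (so that a single term is bounded by the row sum) and strict positivity of $\mb{x}$ (so that one may divide by $[\mb{x}]_j$); and Gelfand's formula, which is norm-independent, then gives $\rho(\mb{X})\leq\lim_k (C z^k)^{1/k}=z$. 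The reduction of the strict statement to the weak one via $z:=\max_i[\mb{X}\mb{x}]_i/[\mb{x}]_i<1$, using finiteness of $d$, is also sound. The only cosmetic loose end is the degenerate case $z=0$ in that reduction, which the weak statement formally excludes ($z>0$): it occurs only when $\mb{X}\mb{x}=\mb{0}_d$, which together with $\mb{x}>\mb{0}_d$ and non-negativity forces $\mb{X}=\mb{O}_d$ and hence $\rho(\mb{X})=0<1$, so it is dispatched in one line.
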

\begin{lem}\label{st}
If~$0<\a\leq\min\big\{\frac{1-\lambda^2}{24\lambda},\frac{(1-\lambda^2)^2}{15\lambda^2}\big\}\frac{1}{L}$, then we have $\rho(\mb G)<1$ and hence~$\sum_{k=0}^{\infty}\mb{G}^k = (\mb{I}_2-\mb{G})^{-1}$.
\end{lem}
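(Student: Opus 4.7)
The plan is to apply Lemma~\ref{rho_bound} to the explicit $2\times 2$ matrix $\mb G$. Since both diagonal entries of $\mb G$ equal $d := (1+\lambda^2)/2$ and both off-diagonal entries are strictly positive, the eigenvalue structure is particularly simple: the eigenvalues are $d \pm \sqrt{ab}$, where $a := 2\a^2\lambda^2 L^2/(1-\lambda^2)$ and $b := 24\lambda^2/(1-\lambda^2)$ are the upper-right and lower-left entries respectively. Hence
\[
\rho(\mb G) \;=\; \frac{1+\lambda^2}{2} + \sqrt{ab} \;=\; \frac{1+\lambda^2}{2} + \frac{\sqrt{48}\,\a\lambda^2 L}{1-\lambda^2}.
\]

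The first step is therefore to enforce the strict inequality $\rho(\mb G)<1$, which after rearrangement is equivalent to $\a < (1-\lambda^2)^2/(2\sqrt{48}\,\lambda^2 L)$. Since $15 > 2\sqrt{48}$ (because $225>192$), the hypothesis $\a \le (1-\lambda^2)^2/(15\lambda^2 L)$ is sufficient; the other bound $\a \le (1-\lambda^2)/(24\lambda L)$ is inherited from Lemma~\ref{GT_final} and is needed to ensure the validity of the LTI recursion~\eqref{lti_ncvx} itself. An alternative (and essentially equivalent) route is to exhibit a positive vector $\mb v = (v_1, v_2)^\top$ satisfying $\mb G\mb v < \mb v$ entry-wise; the two inequalities reduce to $\frac{48\lambda^2}{(1-\lambda^2)^2}v_1 < v_2$ and $\frac{4\a^2\lambda^2L^2}{(1-\lambda^2)^2}v_2 < v_1$, whose simultaneous solvability is again equivalent to $192\,\a^2\lambda^4L^2/(1-\lambda^2)^4 < 1$, and Lemma~\ref{rho_bound} then delivers $\rho(\mb G)<1$.

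The second step is the Neumann-series identity $\sum_{k=0}^{\infty}\mb{G}^k = (\mb I_2 - \mb G)^{-1}$. This is a standard consequence of $\rho(\mb G)<1$: the partial sums form a Cauchy sequence in any matrix norm (since any submultiplicative norm bounds the tail by a convergent geometric series once one selects a norm in which $\|\mb G\|<1$, for instance using the Gelfand formula or a similarity transformation), and the identity $(\mb I_2-\mb G)\sum_{k=0}^{N}\mb G^k = \mb I_2 - \mb G^{N+1}$ passes to the limit because $\mb G^{N+1}\to\mb O_2$.

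No real obstacle is anticipated; the calculation is routine once one observes the symmetric-diagonal structure that yields a closed-form eigenvalue $d+\sqrt{ab}$. The only minor subtlety is verifying that the numerical constant $15$ is large enough, which amounts to the elementary check $15^2 = 225 > 192 = 4\cdot 48$.
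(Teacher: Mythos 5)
Your proof is correct. The only substantive difference from the paper is the route to $\rho(\mb G)<1$: the paper invokes Lemma~\ref{rho_bound} and exhibits a positive test vector with the specific ratio $s_1/s_2=(1-\lambda^2)^2/(50\lambda^2)$, which forces the condition $\a^2 < (1-\lambda^2)^4/(200\lambda^4L^2)$ and hence the constant $15>\sqrt{200}\approx 14.14$; you instead exploit the equal-diagonal structure of $\mb G$ to write the spectral radius in closed form as $\tfrac{1+\lambda^2}{2}+\sqrt{ab}$, which yields the exact threshold $\a<(1-\lambda^2)^2/(\sqrt{192}\,\lambda^2L)$ with $\sqrt{192}\approx 13.86$. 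Your computation is therefore slightly sharper (it shows the constant $15$ is nearly tight, whereas any admissible test vector necessarily loses a little), and your remark that the test-vector route is solvable precisely when $192\,\a^2\lambda^4L^2/(1-\lambda^2)^4<1$ correctly identifies the two arguments as equivalent in the limit of an optimal ratio $v_1/v_2$. Your handling of the Neumann series and your observation that the bound $\a\le(1-\lambda^2)/(24\lambda L)$ plays no role in the spectral-radius claim itself (it is carried along from Proposition~\ref{LTI_ncvx}) are both accurate; the paper leaves both points implicit.
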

\begin{proof}
In the light of~Lemma~\ref{rho_bound}, we solve the range of~$\a$ and a positive vector~$\mb{s} = [s_1,s_2]^\top$ such that~$\mb{Gs} < \mb s$, which is equivalent to the following two inequalities:
\[
\begin{cases}
\dfrac{1+\lambda^2}{2}s_1 + \dfrac{2\a^2\lambda^2L^2}{1-\lambda^2}s_2<s_1\\[1.5ex]
\dfrac{24\lambda^2}{1-\lambda^2}s_1 + \dfrac{1+\lambda^2}{2}s_2 < s_2
\end{cases}
\!\!\!\!\!\!\!\Longleftrightarrow\!
\begin{cases}
\a^2<\dfrac{(1-\lambda^2)^2}{4\lambda^2L^2}\dfrac{s_1}{s_2}\\[1.5ex]
\dfrac{s_1}{s_2}  < \dfrac{(1-\lambda^2)^2}{48\lambda^2}
\end{cases}
\]
We set~$s_1/s_2 = (1-\lambda^2)^2/(50\lambda^2)$ and the proof follows by using it to solve for the range of~$\a$ such that the first inequality above holds. 
\end{proof}
\noindent
Now, we prove an upper bound on the accumulated consensus errors along the algorithm path as follows.
\begin{lem}\label{consensus_acc}
Let Assumption~\ref{f}-\ref{o} hold. If~$0<\a\leq\min\big\{\frac{1-\lambda^2}{24\lambda},\frac{(1-\lambda^2)^2}{8\sqrt{6}\lambda^2}\big\}\frac{1}{L}$, then we have the following inequality.
\begin{align*}
\sum_{k=0}^K\E\bigg[\frac{\ln\x_k-\mb{J}\x_k\rn^2}{n}\bigg] 
\leq&~\frac{96\a^4\lambda^4 L^2}{(1-\lambda^2)^4}\sum_{k=0}^{K-1}\mathbb{E}\left[\left\|\ol{\nf}_{k}\right\|^2\right] \n\\
+&\frac{16\a^2\lambda^4}{(1-\lambda^2)^3}\frac{\left\|\nf_0\right\|^2}{n}  + \frac{112\a^2\lambda^2\nu_a^2K}{(1-\lambda^2)^3}.
\end{align*}
\end{lem}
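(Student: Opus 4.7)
The plan is to iterate the linear-time-invariant recursion in Proposition~\ref{LTI_ncvx} and extract the first entry of the resulting cumulative bound. Unrolling $\mb{u}_{k+1}\leq\mb{G}\mb{u}_{k}+\mb{b}_{k}$ entry-wise gives $\mb{u}_{k}\leq\mb{G}^{k}\mb{u}_{0}+\sum_{j=0}^{k-1}\mb{G}^{k-1-j}\mb{b}_{j}$. Summing from $k=0$ to $k=K$ and swapping the order of the double sum yields
\begin{align*}
\sum_{k=0}^{K}\mb{u}_{k}\leq\Big(\sum_{k=0}^{K}\mb{G}^{k}\Big)\mb{u}_{0}+\sum_{j=0}^{K-1}\Big(\sum_{\ell=0}^{K-1-j}\mb{G}^{\ell}\Big)\mb{b}_{j}\leq(\mb{I}_{2}-\mb{G})^{-1}\Big(\mb{u}_{0}+\sum_{j=0}^{K-1}\mb{b}_{j}\Big),
\end{align*}
where the second step uses the non-negativity of $\mb{G},\mb{u}_{0},\mb{b}_{j}$ together with the Neumann identity $\sum_{k=0}^{\infty}\mb{G}^{k}=(\mb{I}_{2}-\mb{G})^{-1}$, which is guaranteed by Lemma~\ref{st} since the hypothesized step-size bound is strictly tighter than the one needed there. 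Since the quantity of interest is $\sum_{k=0}^{K}[\mb{u}_{k}]_{1}$ and the first entries of $\mb{u}_{0}$ and each $\mb{b}_{j}$ vanish, the problem reduces to controlling $[(\mb{I}_{2}-\mb{G})^{-1}]_{1,2}\bigl([\mb{u}_{0}]_{2}+\sum_{j}[\mb{b}_{j}]_{2}\bigr)$.

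The second step is an explicit computation of that $(1,2)$-entry. Direct evaluation gives $\det(\mb{I}_{2}-\mb{G})=\frac{(1-\lambda^{2})^{2}}{4}-\frac{48\a^{2}\lambda^{4}L^{2}}{(1-\lambda^{2})^{2}}$, and under the hypothesis $\a\leq\frac{(1-\lambda^{2})^{2}}{8\sqrt{6}\lambda^{2}L}$ the subtrahend is at most $\frac{(1-\lambda^{2})^{2}}{8}$, so $\det(\mb{I}_{2}-\mb{G})\geq\frac{(1-\lambda^{2})^{2}}{8}$. Reading off the adjugate then yields $[(\mb{I}_{2}-\mb{G})^{-1}]_{1,2}\leq\frac{8}{(1-\lambda^{2})^{2}}\cdot\frac{2\a^{2}\lambda^{2}L^{2}}{1-\lambda^{2}}=\frac{16\a^{2}\lambda^{2}L^{2}}{(1-\lambda^{2})^{3}}$, which is precisely the factor that will produce the $\a^{4}$ dependence in the first term of the claim and the $\a^{2}$ dependence in the remaining two.

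The third step is to handle the initial condition and noise loads. The assumption $\x_{0}^{i}=\x_{0}^{r}$ gives $[\mb{u}_{0}]_{1}=0$, while the algorithmic initialization $\mb{y}_{0}=\mb{0}$, $\g_{-1}=\mb{0}$ gives $\mb{y}_{1}=\mb{W}\g_{0}$, so Lemma~\ref{basic}(\ref{W}) and Assumption~\ref{o} yield $\mbb{E}[\ln\mb{y}_{1}-\mb{J}\mb{y}_{1}\rn^{2}]\leq\lambda^{2}\mbb{E}[\ln\g_{0}\rn^{2}]\leq\lambda^{2}(n\nu_{a}^{2}+\ln\nf_{0}\rn^{2})$, i.e., $[\mb{u}_{0}]_{2}\leq\frac{\lambda^{2}\nu_{a}^{2}}{L^{2}}+\frac{\lambda^{2}\ln\nf_{0}\rn^{2}}{nL^{2}}$. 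Substituting these together with $[\mb{b}_{j}]_{2}=\frac{6\lambda^{2}\a^{2}}{1-\lambda^{2}}\mbb{E}[\ln\ol{\nf}_{j}\rn^{2}]+\frac{6\nu_{a}^{2}}{L^{2}}$ into the first-entry inequality and multiplying by $[(\mb{I}_{2}-\mb{G})^{-1}]_{1,2}$ produces the $\sum_{j}\mbb{E}[\ln\ol{\nf}_{j}\rn^{2}]$ term with constant $\frac{96\a^{4}\lambda^{4}L^{2}}{(1-\lambda^{2})^{4}}$, the boundary term $\frac{16\a^{2}\lambda^{4}\ln\nf_{0}\rn^{2}}{n(1-\lambda^{2})^{3}}$, and two noise contributions of sizes $\frac{96\a^{2}\lambda^{2}\nu_{a}^{2}K}{(1-\lambda^{2})^{3}}$ and $\frac{16\a^{2}\lambda^{4}\nu_{a}^{2}}{(1-\lambda^{2})^{3}}$; absorbing the latter into the former via $\lambda^{4}\leq\lambda^{2}$ and $K\geq 1$ tightens $96+16$ to the claimed constant $112$. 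The main (purely mechanical) obstacle is simply tracking these constants through the adjugate computation; no new probabilistic content is needed beyond what Proposition~\ref{LTI_ncvx} already encapsulates.
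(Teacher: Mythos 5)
Your proposal is correct and follows essentially the same route as the paper's own proof: unroll the LTI recursion of Proposition~\ref{LTI_ncvx}, sum and bound the Neumann series by $(\mb{I}_2-\mb{G})^{-1}$ via the determinant lower bound $\det(\mb{I}_2-\mb{G})\geq\tfrac{(1-\lambda^2)^2}{8}$, and handle the initialization through $\mb{y}_1=\mb{W}\g_0$; the constant accounting ($96+16=112$ after absorbing the $\lambda^4\nu_a^2$ term) matches the paper exactly.
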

\begin{proof}
We recursively apply~\eqref{lti_ncvx} to obtain:~$\forall k\geq1$,
\begin{align}\label{r_ncvx_flat}
\u_{k} 
\leq \mb{G}^{k}\u_0 + \sum_{t = 0}^{k-1}\mb{G}^t\mb{b}_{k-1-t}.
\end{align}
Summing up~\eqref{r_ncvx_flat} over~$k$ from~$1$ to~$K$, we obtain:~$\forall K\geq1$, 
\begin{align}\label{sum}
\sum_{k=0}^K\u_{k} 
\leq&~\sum_{k=0}^K \G^{k}\u_0 + \sum_{k=1}^K\sum_{t = 0}^{k-1}\G^t\mb{b}_{k-1-t} \n\\
\leq&~\left(\sum_{k=0}^{\infty}\G^{k}\right)\u_0 + \left(\sum_{k=0}^{\infty}\G^{k}\right)\sum_{k=0}^{K-1}\mb{b}_{k} \n\\
=&~(\I_2-\G)^{-1}\u_0 + (\I_2-\G)^{-1}\sum_{k=0}^{K-1}\mb{b}_{k}.
\end{align}
In light of~\eqref{sum}, we next compute an (entry-wise) upper bound on~$(\I_2-\G)^{-1}$ as follows. We note that if~$0<\a\leq\tfrac{(1-\lambda^2)^2}{8\sqrt{6}\lambda^2L}$, 
\begin{align*}
\det(\I_2-\G) = \frac{(1-\lambda^2)^2}{4} - \frac{48\a^2\lambda^4L^2}{(1-\lambda^2)^2} \geq \frac{(1-\lambda^2)^2}{8}.
\end{align*}
Using the lower bound on~$\det(\I_2-\G)$ above, we have that
\begin{align}\label{I-G}
(\I_2-\G)^{-1} = \frac{(\I_2-\G)^*}{\det(\I_2-\G)} 
\leq&~
\begin{bmatrix}
\dfrac{4}{1-\lambda^2} & \dfrac{16\a^2\lambda^2L^2}{(1-\lambda^2)^3}\\[1.5ex] 
\dfrac{192\lambda^2}{(1-\lambda^2)^3} & \dfrac{4}{1-\lambda^2}
\end{bmatrix}.
\end{align}
We use~\eqref{I-G} in~\eqref{sum} with~$\|\x_0-\J\x_0\| = 0$ to obtain:~$\forall K\geq1$,
\begin{align}\label{sum2}
\sum_{k=0}^K&~\E\bigg[\frac{\ln\x_k-\mb{J}\x_k\rn^2}{n}\bigg]
\leq\frac{16\a^2\lambda^2}{(1-\lambda^2)^3}\E\bigg[\frac{\left\|\y_1-\mb{J}\y_1\right\|^2}{n}\bigg] \nonumber\\
&+ \frac{96\a^4\lambda^4 L^2}{(1-\lambda^2)^4}\sum_{k=0}^{K-1}\mathbb{E}\left[\left\|\ol{\nf}_{k}\right\|^2\right] 
+ \frac{96\a^2\lambda^2\nu_a^2K}{(1-\lambda^2)^3}.
\end{align}
Finally, we use the gradient tracking update~\eqref{y} to obtain:
\begin{align}\label{y1}
&\E\big[\left\|\y_1-\mb{J}\y_1\right\|^2\big]\n\\
=&~\E\big[\E\big[\left\|(\mb{W}-\J)\g_{0}\right\|^2\big]|\F_0\big] \n\\
=&~\E\big[\left\|(\mb{W}-\J)(\g_{0}-\nf_0)\right\|^2\big]
+ \E\big[\left\|(\mb{W}-\J)\nf_0\right\|^2\big] \n\\
\leq&~\lambda^2 n\nu_a^2 + \lambda^2\ln\nf_0\rn^2,
\end{align}
where the second equality uses~$\E[\g_0|\F_0] = \nf_0$ and that~$\nf_0$ is constant and the last inequality uses~$\|\W-\J\| = \lambda$.
The proof follows by using~\eqref{y1} in~\eqref{sum2}.
\end{proof}
\noindent
Lemma~\ref{consensus_acc} states that the accumulated consensus error may be bounded by the accumulated average of local exact gradients and the accumulated variance of stochastic gradients. We next show that this bound leads to the convergence of~\DSGT~for general smooth non-convex functions, i.e., Theorem~\ref{conv_ncvx}.

\begin{TH0}
We take the expectation of the descent inequality in Lemma~\ref{ds} and sum up the resulting inequality over~$k$ from~$0$ to~$K-1$,~$\forall K\geq1$, to obtain: if~$0<\a\leq\frac{1}{2L}$,
\begin{align}\label{mean0}
\E\left[F(\ol{\x}_{K})\right] 
\leq&~\E\left[F(\ol{\x}_{0})\right] -\frac{\a}{2}\sum_{k=0}^{K-1}\E\left[\ln\nabla F(\ol{\x}_{k})\rn^2\right] \n\\
&-\frac{\a}{4}\sum_{k=0}^{K-1}\E\left[\ln\ol{\nf}_{k}\rn^2\right] 
+\frac{\a^2\nu^2_a LK}{2n} \n\\
& +\frac{\a L^2}{2}\sum_{k=0}^{K-1}\E\bigg[\frac{\ln\x_k - \mb{J}\x_k\rn^2}{n}\bigg].
\end{align}
Rearranging~\eqref{mean0} and using that~$F$ is bounded below by~$F^*$ obtains: if~$0<\a\leq\frac{1}{2L}$,~$\forall K\geq1$,
\begin{align}\label{mean}
&\sum_{k=0}^{K-1}\E\left[\ln\nabla F(\ol{\x}_{k})\rn^2\right] 
\leq\frac{2(F(\ol{\x}_{0}) - F^*)}{\a}  +\frac{\a\nu^2_a LK}{n} \n\\
&~~~-\frac{1}{2}\sum_{k=0}^{K-1}\E\left[\ln\ol{\nf}_{k}\rn^2\right] 
+ L^2\sum_{k=0}^{K-1}\E\bigg[\frac{\ln\x_k - \mb{J}\x_k\rn^2}{n}\bigg].
\end{align}
Moreover, we observe:~$\forall K\geq1$,
\begin{align*}
&\frac{1}{n}\sum_{i=1}^n\sum_{k=0}^{K-1}\E\left[\ln\nabla F(\x_k^i)\rn^2\right]    \n\\
\leq&~\frac{2}{n}\sum_{i=1}^n\sum_{k=0}^{K-1}\left(\E\left[\ln\nabla F(\x_k^i) - \nabla F(\ol{\x}_k)\rn^2 + \ln\nabla F(\ol{\x}_k)\rn^2\right] \right)\n\\
\leq&~2L^2\sum_{k=0}^{K-1}\E\bigg[\frac{\ln\x_k - \J\x_k\rn^2}{n}\bigg]
+2\sum_{k=0}^{K-1}\E\left[\ln\nabla F(\ol{\x}_k)\rn^2\right],
\end{align*}
where the last inequality uses the~$L$-smoothness of~$F$.
Using~\eqref{mean} in the inequality above obtains:~$\forall K\geq1$,
\begin{align}\label{i}
\frac{1}{n}\sum_{i=1}^n\sum_{k=0}^{K-1}\E\left[\ln\nabla F(\x_{k}^i)\rn^2\right] \leq&~\frac{4(F(\ol{\x}_{0}) - F^*)}{\a}  +\frac{2\a\nu^2_a LK}{n} \n\\
- \sum_{k=0}^{K-1}\E\left[\ln\ol{\nf}_{k}\rn^2\right] 
&+ 4L^2\sum_{k=0}^{K-1}\E\bigg[\frac{\ln\x_k - \mb{J}\x_k\rn^2}{n}\bigg].
\end{align}
We finally apply the upper bound derived in Lemma~\ref{consensus_acc} on the term of~\eqref{i} to obtain: If~$0<\a\leq\min\big\{\frac{1}{2},\frac{1-\lambda^2}{24\lambda},\tfrac{(1-\lambda^2)^2}{8\sqrt{6}\lambda^2}\big\}\frac{1}{L}$,
\begin{align*}
&\frac{1}{n}\sum_{i=1}^n\sum_{k=0}^{K-1}\E\left[\ln\nabla F(\x_k^i)\rn^2\right] \n\\
\leq&~\frac{4(F(\ol{\x}_{0}) - F^*)}{\a}  +\frac{2\a\nu^2_a LK}{n} +\frac{448\a^2L^2\lambda^2\nu_a^2K}{(1-\lambda^2)^3} \n\\
-& \Big(1-\frac{384\a^4L^4\lambda^4}{(1-\lambda^2)^4}\Big)\sum_{k=0}^{K-1}\E\left[\ln\ol{\nf}_{k}\rn^2\right] +\frac{64\a^2L^2\lambda^4}{(1-\lambda^2)^3}\frac{\left\|\nf_0\right\|^2}{n}.
\end{align*}
Clearly, if~$0<\a\leq\frac{1-\lambda^2}{5L\lambda}$, then~$1-\frac{384\a^4L^4\lambda^4}{(1-\lambda^2)^4}\geq0$, and the proof follows by dropping the negative term.
\end{TH0}

\section{Convergence analysis under PL condition: constant step-size}\label{S_PL_ms_cst}
In this section, we, built on top of the results established in Section~\ref{S_ncvx}, develop general bounds on the iterates of \textbf{\texttt{GT-DSGD}} when the global function~$F$ further satisfies the PL condition and prove Theorem~\ref{conv_PL}. The following is a useful inequality that may be found in~\cite{book_polyak}.
\begin{lem}\label{upperL}
Let Assumption~\ref{f} hold. We have:~$\forall \x\in\R^p$.
\begin{align*}
\ln\nabla F(\x)\rn^2 \leq 2L\left(F(\x)-F^*\right).    
\end{align*}
\end{lem}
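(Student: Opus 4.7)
The plan is to exploit the $L$-smoothness of $F$, which follows immediately from Assumption~\ref{f} since $F = \tfrac{1}{n}\sum_i f_i$ is an average of $L$-smooth functions. This gives us the descent inequality \eqref{L_lemma} at our disposal as a ready-made tool.

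The key idea is to instantiate \eqref{L_lemma} at the point obtained by taking one full gradient step from $\x$. Specifically, I would set $\y = \x - \tfrac{1}{L}\nabla F(\x)$ in \eqref{L_lemma}, which yields
\begin{align*}
F\bigl(\x - \tfrac{1}{L}\nabla F(\x)\bigr) \leq F(\x) - \tfrac{1}{L}\|\nabla F(\x)\|^2 + \tfrac{1}{2L}\|\nabla F(\x)\|^2 = F(\x) - \tfrac{1}{2L}\|\nabla F(\x)\|^2.
\end{align*}
The left-hand side is bounded below by $F^*$ by Assumption~\ref{f}, so $F^* \leq F(\x) - \tfrac{1}{2L}\|\nabla F(\x)\|^2$, and rearranging completes the proof.

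There is no real obstacle here; the only ``choice'' in the argument is the step-size $1/L$ in the auxiliary point $\y$, which is the minimizer of the quadratic upper bound on the right-hand side of \eqref{L_lemma} with respect to $\y$ and hence extracts the tightest possible one-step descent. This inequality is classical (e.g.,~\cite{book_polyak}) and the two-line derivation above suffices.
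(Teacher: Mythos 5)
Your proposal is correct and is exactly the paper's argument: instantiate the smoothness inequality~\eqref{L_lemma} at $\y = \x - L^{-1}\nabla F(\x)$ and lower-bound the left-hand side by $F^*$. No differences worth noting.
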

\begin{proof}
By~\eqref{L_lemma} and the fact that~$F$ is bounded below by~$F^*$, we have~$F^* 
\leq F\left(\x - L^{-1}\nabla F(\x)\right) 
\leq F(\x) - \frac{1}{2L}\ln\nabla F(\x)\rn^2$, which yields the desired inequality.
\end{proof}
\noindent
We conclude from~Lemma~\ref{upperL} that, under Assumption~\ref{f} and~\ref{PL}, $\mu\leq L$ and recall $\kappa := \frac{L}{\mu}\geq1$. 
The following lemma is helpful in establishing the performance of~\DSGT~at each node.  
\begin{lem}\label{F_ave}
Let Assumption~\ref{f} hold. We have
\begin{align*}
\frac{1}{n}\sum_{i=1}^n(F(\x_k^i) - F^*) 
\leq&~2\left(F(\ol{\x}_k) - F^*\right)  + L\frac{\ln\x_k-\J\x_k\rn^2}{n}.
\end{align*}
\end{lem}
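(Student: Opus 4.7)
The plan is to invoke the quadratic upper bound implied by $L$-smoothness, expanded around the network average $\ol{\x}_k$, and then average over the nodes. First I would observe that $F=\tfrac{1}{n}\sum_{i=1}^n f_i$ inherits $L$-smoothness from each $f_i$ under Assumption~\ref{f}, because $\nabla F = \tfrac{1}{n}\sum_{i=1}^n \nabla f_i$ is an average of $L$-Lipschitz maps and is thus itself $L$-Lipschitz. Applying the standard descent inequality~\eqref{L_lemma} with $\x = \ol{\x}_k$ and $\y = \x_k^i$ gives, for every $i\in\mc{V}$,
\begin{align*}
F(\x_k^i) \leq F(\ol{\x}_k) + \langle \nabla F(\ol{\x}_k),\,\x_k^i - \ol{\x}_k\rangle + \tfrac{L}{2}\|\x_k^i - \ol{\x}_k\|^2.
\end{align*}

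Next I would average this inequality over $i=1,\dots,n$. By the very definition $\ol{\x}_k = \tfrac{1}{n}\sum_{i}\x_k^i$, the linear cross term collapses to zero, and the quadratic term aggregates into the consensus norm, since $\sum_{i=1}^n \|\x_k^i-\ol{\x}_k\|^2 = \|\x_k-\J\x_k\|^2$ by the definitions of $\x_k$ and $\J$. This produces
\begin{align*}
\frac{1}{n}\sum_{i=1}^n F(\x_k^i) \leq F(\ol{\x}_k) + \frac{L}{2n}\|\x_k-\J\x_k\|^2.
\end{align*}

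Finally I would subtract $F^*$ from both sides and loosen the constants. Since $F^*=\inf_{\x} F(\x)$ (Assumption~\ref{f}), we have $F(\ol{\x}_k)-F^*\geq 0$, so trivially $F(\ol{\x}_k)-F^* \leq 2(F(\ol{\x}_k)-F^*)$, and likewise $\tfrac{L}{2n}\|\x_k-\J\x_k\|^2 \leq \tfrac{L}{n}\|\x_k-\J\x_k\|^2$. Combining these yields the claimed inequality. There is no conceptual obstacle here; the only minor subtlety is recognizing that the inflation of the coefficient of $F(\ol{\x}_k)-F^*$ from $1$ to $2$ (and of the consensus term from $L/2$ to $L$) is a deliberate slackening designed so that the bound dovetails with the PL-based estimates on $F(\ol{\x}_k)-F^*$ and on $\|\x_k-\J\x_k\|^2$ invoked later in Section~\ref{S_PL_ms_cst}.
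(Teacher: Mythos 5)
Your proof is correct, but it takes a genuinely different route from the paper's. You exploit the exact cancellation of the linear term under averaging: since $\ol{\x}_k=\tfrac{1}{n}\sum_i\x_k^i$, the cross terms $\langle\nabla F(\ol{\x}_k),\x_k^i-\ol{\x}_k\rangle$ sum to zero, which yields the tighter averaged bound $\tfrac{1}{n}\sum_i F(\x_k^i)\leq F(\ol{\x}_k)+\tfrac{L}{2n}\|\x_k-\J\x_k\|^2$, after which the stated inequality follows by deliberately inflating the constants. The paper instead works node by node: it bounds the cross term for each fixed $i$ via Cauchy--Schwarz and Young's inequality, $\langle\nabla F(\ol{\x}_k),\x_k^i-\ol{\x}_k\rangle\leq\tfrac{1}{2L}\|\nabla F(\ol{\x}_k)\|^2+\tfrac{L}{2}\|\x_k^i-\ol{\x}_k\|^2$, and then absorbs $\tfrac{1}{2L}\|\nabla F(\ol{\x}_k)\|^2$ into $F(\ol{\x}_k)-F^*$ using Lemma~\ref{upperL}; this is exactly where the factors $2$ and $L$ originate, and averaging over $i$ is only the last step. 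The trade-off is clear: your argument is more elementary (no appeal to Lemma~\ref{upperL}) and proves a strictly sharper averaged inequality, whereas the paper's argument establishes the stronger \emph{per-node} estimate $F(\x_k^i)-F^*\leq 2(F(\ol{\x}_k)-F^*)+L\|\x_k^i-\ol{\x}_k\|^2$ for every $i$, which survives without any averaging. For the lemma as stated, either suffices.
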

\begin{proof}
Setting~$\y = \x_k^i$ and~$\x = \ol{\x}_k$ in~\eqref{L_lemma}, we obtain
\begin{align}\label{F_ave_0}
&F(\x_k^i) - F^* \n\\
\leq&~F(\ol{\x}_k) - F^* + \l \nabla F(\ol{\x}_k), \x_k^i - \ol{\x}_k \r + \tfrac{1}{2}L\ln\x_k^i-\ol{\x}_k\rn^2, \n\\
\leq&~F(\ol{\x}_k) - F^* + \ln\nabla F(\ol{\x}_k)\rn \ln\x_k^i - \ol{\x}_k\rn + \tfrac{1}{2}L\ln\x_k^i-\ol{\x}_k\rn^2, \n\\
\leq&~F(\ol{\x}_k) - F^* + \tfrac{1}{2}L^{-1}\ln\nabla F(\ol{\x}_k)\rn^2 + L\ln\x_k^i-\ol{\x}_k\rn^2 \n\\
\leq&~2\left(F(\ol{\x}_k) - F^*\right)  + L\ln\x_k^i-\ol{\x}_k\rn^2,
\end{align}
where the third inequality uses Young's inequality and the last inequality is due to Lemma~\ref{upperL}. Averaging~\eqref{F_ave_0} over~$i$ from~$1$ to~$n$ proves the lemma.
\end{proof}
\noindent
In the following, we refine several results developed in Section~\ref{S_ncvx}. We first use the PL inequality to in Lemma~\ref{ds}. 
\begin{lem}\label{ds_PL}
Let~Assumptions~\ref{f}-\ref{PL} hold. If~$0<\a_k\leq\frac{1}{2L}$, then we have:~$\forall k\geq0$,
\begin{align*}
\E\left[\frac{F(\ol{\x}_{k+1})- F^*}{L}\Big|\F_k\right] 
\leq&~(1 -\mu\a_k)\frac{F(\ol{\x}_{k})- F^*}{L} \n\\
&+ \frac{\a_k L}{2}\frac{\ln\x_k - \mb{J}\x_k\rn^2}{n} +\frac{\a_k^2\nu^2_a}{2n}.
\end{align*}
\end{lem}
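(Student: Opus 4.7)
The plan is to derive Lemma~\ref{ds_PL} directly from the descent inequality in Lemma~\ref{ds} by substituting the PL condition. Since Lemma~\ref{ds} has already done the heavy lifting of controlling the second-order term, the noise, and the consensus error, no new computation is needed beyond a change of variable to the suboptimality gap $F(\ol{\x}_k)-F^*$ and an application of Assumption~\ref{PL}.

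First I would take the bound in Lemma~\ref{ds}, which is valid provided $0<\a_k\leq 1/(2L)$, and subtract $F^*$ from both sides to write everything in terms of $F(\ol{\x}_{k+1})-F^*$ and $F(\ol{\x}_k)-F^*$. The term $-\tfrac{\a_k}{4}\|\ol{\nf}_k\|^2$ is non-positive, so it can simply be dropped from the right-hand side, leaving the three surviving terms: the full-gradient term $-\tfrac{\a_k}{2}\|\nabla F(\ol{\x}_k)\|^2$, the consensus term, and the stochastic error term.

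Next I would invoke the PL condition (Assumption~\ref{PL}) in the form $\|\nabla F(\ol{\x}_k)\|^2 \geq 2\mu(F(\ol{\x}_k)-F^*)$, which, because its coefficient in the descent inequality is negative, yields
\[
-\tfrac{\a_k}{2}\|\nabla F(\ol{\x}_k)\|^2 \leq -\mu\a_k\bigl(F(\ol{\x}_k)-F^*\bigr).
\]
Combining this with the preceding step gives a contraction of the suboptimality gap with factor $(1-\mu\a_k)$, plus the consensus and noise terms. Finally, dividing both sides by $L$ produces the normalized form stated in the lemma, with the consensus coefficient becoming $\a_k L/2$ and the noise coefficient becoming $\a_k^2\nu_a^2/(2n)$.

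No real obstacle is expected here: the only thing to be careful about is that $\mu\leq L$ (which follows from Lemma~\ref{upperL} together with Assumption~\ref{PL}), so that the contraction factor $1-\mu\a_k$ is non-negative under the assumed step-size range $\a_k\leq 1/(2L)$, making the recursion meaningful as a one-step decrease estimate on $(F(\ol{\x}_k)-F^*)/L$.
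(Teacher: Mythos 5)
Your proposal is correct and follows exactly the paper's own route: apply the PL inequality to the $-\tfrac{\a_k}{2}\|\nabla F(\ol{\x}_k)\|^2$ term in Lemma~\ref{ds}, drop the non-positive $-\tfrac{\a_k}{4}\|\ol{\nf}_k\|^2$ term, subtract $F^*$, and normalize by $L$. The coefficient bookkeeping ($\a_k L^2/(2n)$ and $\a_k^2 L\nu_a^2/(2n)$ becoming $\a_k L/(2n)$ and $\a_k^2\nu_a^2/(2n)$ after division by $L$) checks out.
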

\begin{proof}
The proof follows by using the PL condition in the descent inequality in Lemma~\ref{ds}
and then substracting~$F^*$ from both sides of the resulting inequality.
\end{proof}
\noindent
We next use Lemma~\ref{upperL} to refine Lemma~\ref{GT_final} as follows.
\begin{lem}\label{GT_final_PL}
Let Assumption~\ref{f}-\ref{o} hold. If~$0<\a_k\leq\min\big\{\frac{1-\lambda^2}{12\lambda},1\big\}\frac{1}{2L}$, then we have:~$\forall k\geq0$,
\begin{align*}
\E\bigg[\frac{\ln\y_{k+2} -\mb{J}\y_{k+2}\rn^2}{nL^2} \bigg] 
\leq&~\dfrac{1+\lambda^2}{2}\E\bigg[\frac{\ln\y_{k+1} -\mb{J}\y_{k+1}\rn^2}{nL^2} \bigg]  \n\\
+&\frac{24\lambda^2\a_k^2L^2}{1-\lambda^2}\E\bigg[\frac{F(\ol{\x}_k) - F^*}{L}\bigg] \\
+&\dfrac{27\lambda^2}{1-\lambda^2}\E\bigg[\frac{\ln\x_{k}-\mb{J}\x_{k}\rn^2}{n}\bigg] + \frac{6\nu_a^2}{L^2}.
\end{align*}
\end{lem}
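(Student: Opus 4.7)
The plan is to derive Lemma~\ref{GT_final_PL} as a refinement of Lemma~\ref{GT_final} by replacing the $\mathbb{E}[\|\ol{\nf}_k\|^2]$ term on the right-hand side with quantities that are natural under the PL condition, namely $F(\ol{\x}_k) - F^*$ and the consensus error $\|\x_k-\J\x_k\|^2$. The key observation is that Lemma~\ref{GT_final} provides exactly the same recursion we want, except that its third-to-last term involves the average of the local exact gradients, which is not directly compatible with the Lyapunov function used in the PL analysis. Hence the task reduces to showing that $\mathbb{E}[\|\ol{\nf}_k\|^2]$ can be absorbed into the two target quantities without affecting the step-size constants in an essential way.

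The first step is to split $\ol{\nf}_k = \nabla F(\ol{\x}_k) + (\ol{\nf}_k - \nabla F(\ol{\x}_k))$ and apply the standard inequality $\|a+b\|^2 \leq 2\|a\|^2 + 2\|b\|^2$ to write
\begin{align*}
\mathbb{E}\big[\|\ol{\nf}_k\|^2\big] \leq 2\,\mathbb{E}\big[\|\nabla F(\ol{\x}_k)\|^2\big] + 2\,\mathbb{E}\big[\|\ol{\nf}_k - \nabla F(\ol{\x}_k)\|^2\big].
\end{align*}
Next I would apply Lemma~\ref{upperL} (a direct consequence of $L$-smoothness and $F \geq F^*$) to the first summand, obtaining $\|\nabla F(\ol{\x}_k)\|^2 \leq 2L(F(\ol{\x}_k)-F^*)$, and Lemma~\ref{basic}(\ref{Lbound}) to the second summand, obtaining $\|\ol{\nf}_k - \nabla F(\ol{\x}_k)\|^2 \leq (L^2/n)\|\x_k - \J\x_k\|^2$. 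Substituting these bounds back gives
\begin{align*}
\frac{6\lambda^2\a_k^2}{1-\lambda^2}\mathbb{E}\big[\|\ol{\nf}_k\|^2\big]
\leq \frac{24\lambda^2\a_k^2 L^2}{1-\lambda^2}\,\mathbb{E}\!\left[\frac{F(\ol{\x}_k)-F^*}{L}\right] + \frac{12\lambda^2\a_k^2 L^2}{1-\lambda^2}\,\mathbb{E}\!\left[\frac{\|\x_k-\J\x_k\|^2}{n}\right].
\end{align*}
The first resulting term matches the target coefficient exactly. The second term is now of the same form as the existing $\frac{24\lambda^2}{1-\lambda^2}\mathbb{E}[\|\x_k-\J\x_k\|^2/n]$ coefficient appearing on the right-hand side of Lemma~\ref{GT_final}, so it can be merged with it.

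Finally, I would use the tightened step-size hypothesis $\a_k \leq 1/(2L)$ (which is the second constraint beyond the one inherited from Lemma~\ref{GT_final}) to bound $\a_k^2 L^2 \leq 1/4$ so that $12\lambda^2\a_k^2 L^2 \leq 3\lambda^2$, giving a combined coefficient of $(24+3)\lambda^2/(1-\lambda^2) = 27\lambda^2/(1-\lambda^2)$, which is exactly the stated value. The step-size condition $\a_k \leq (1-\lambda^2)/(12\lambda)\cdot(1/2L) = (1-\lambda^2)/(24\lambda L)$ is precisely the one needed to invoke Lemma~\ref{GT_final} in the first place. There is no real obstacle here: the proof is essentially a one-line algebraic substitution plus accounting for the resulting constants; the only thing to be careful about is verifying that the extra $(1/(2L))$ ceiling on $\a_k$ is indeed sufficient to keep the merged consensus-error coefficient at $27\lambda^2/(1-\lambda^2)$ as claimed.
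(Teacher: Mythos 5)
Your proposal is correct and follows essentially the same route as the paper's proof: the paper likewise bounds $\ln\ol{\nf}_{k}\rn^2 \leq 4L(F(\ol{\x}_k)-F^*) + 2L^2n^{-1}\ln\x_k-\J\x_k\rn^2$ via Lemma~\ref{upperL} and Lemma~\ref{basic}(\ref{Lbound}), substitutes into Lemma~\ref{GT_final}, and merges the consensus coefficients using $\frac{12\lambda^2\a_k^2L^2}{1-\lambda^2}\leq\frac{3\lambda^2}{1-\lambda^2}$ for $\a_k\leq\frac{1}{2L}$. Your accounting of the constants and of the role of each step-size constraint matches the paper exactly.
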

\begin{proof}
By Lemma~\ref{basic}(\ref{Lbound}) and Lemma~\ref{upperL}, we have:~$\forall k\geq0$,
\begin{align}
\ln\ol{\nf}_{k}\rn^2 
\leq&\!~2\ln\nabla F(\ol{\x}_k)\rn^2 + 2\ln\nabla F(\ol{\x}_k) - \ol{\nf}_{k}\rn^2 \n\\
\leq&\!~4L\left(F(\ol{\x}_k) - F^*\right) + 2L^2 n^{-1}\ln\x_k -\J\x_k\rn^2.     
\end{align}
Using the inequality above in Lemma~\ref{GT_final} to obtain:~$\forall k\geq0$,
\begin{align*}
&~~~\E\bigg[\frac{\ln\y_{k+2} -\mb{J}\y_{k+2}\rn^2}{nL^2} \bigg] \n\\
&\leq\left(\dfrac{24\lambda^2}{1-\lambda^2} + \frac{12\lambda^2\a_k^2L^2}{1-\lambda^2} \right)\E\bigg[\frac{\ln\x_{k}-\mb{J}\x_{k}\rn^2}{n}\bigg] + \frac{6\nu_a^2}{L^2}  \n\\
&+ \frac{24\lambda^2\a_k^2L}{1-\lambda^2}\E\big[F(\ol{\x}_k) \!-\! F^*\big]
+\dfrac{1+\lambda^2}{2}\E\bigg[\frac{\ln\y_{k+1} \!-\! \mb{J}\y_{k+1}\rn^2}{nL^2}\bigg].
\end{align*}
The proof follows by~$\frac{12\lambda^2\a_k^2L^2}{1-\lambda^2} \leq \frac{3\lambda^2}{1-\lambda^2}$ if~$0<\a_k\leq\frac{1}{2L}$.
\end{proof}
\noindent

We now write the inequalities in Lemma~\ref{cons},~\ref{ds_PL} and~\ref{GT_final_PL} jointly in a linear dynamics as follows.
\begin{prop}\label{LTI}
Let Assumption~\ref{f}-\ref{PL} hold. If~$0<\a_k\leq\min\big\{1,\frac{1-\lambda^2}{12\lambda},\tfrac{(1-\lambda^2)^2}{4\sqrt{6}\lambda^2}\big\}\frac{1}{2L}$, then we have the following (entry-wise) matrix-vector inequality:~$\forall k\geq0$, 
\begin{align}\label{lti}
\mb{v}_{k+1} \leq \mb{H}_k\mb{v}_{k} + \mb{u}_k,    
\end{align}
where the state vector~$\mb{v}_k\in\R^3$, the system matrix~$\mb{H}\in\R^{3\times 3}$ and the perturbation vector~$\mb{u}_k\in\R^3$ are given by
\begin{align*}
\mb{v}_k =&~\begin{bmatrix}
\mbb{E}\bigg[\dfrac{\left\|\mb{x}_{k}-\mb{J}\mb{x}_{k}\right\|^2}{n}\bigg] \\[2ex]
\mbb{E}\bigg[\dfrac{F(\ol{\x}_k) - F^*}{L}\bigg] \\[2ex]
\mbb{E}\bigg[\dfrac{\left\|\mb{y}_{k+1}-\mb{J}\mb{y}_{k+1}\right\|^2}{nL^2}\bigg]
\end{bmatrix},\quad
\mb{u}_k = \begin{bmatrix}
0\\[2ex]
\dfrac{\a_k^2\nu_a^2}{2n}\\[2ex]
\dfrac{6\nu_a^2}{L^2}
\end{bmatrix},
\\     
\mb{H}_k =&~
\begin{bmatrix}
\dfrac{1+\lambda^2}{2} &  0  & \dfrac{2\a_k^2\lambda^2L^2}{1-\lambda^2} \\[2ex]
\dfrac{\a_k L}{2} &  1-\mu\a_k  & 0 \\[2ex]
\dfrac{27\lambda^2}{1-\lambda^2} & \dfrac{24\lambda^2\a_k^2L^2}{1-\lambda^2}  & \dfrac{1+\lambda^2}{2}
\end{bmatrix}
.
\end{align*}
\end{prop}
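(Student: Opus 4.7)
The plan is to stack the three already-established recursive inequalities—Lemma~\ref{cons} for the consensus error, Lemma~\ref{ds_PL} for the optimality gap, and Lemma~\ref{GT_final_PL} for the gradient tracking error—into a single entry-wise vector inequality whose coefficient matrix is exactly $\mb{H}_k$. Substantively, the proposition is a bookkeeping reorganization of facts already proved.

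First I would take unconditional expectations in all three bounds: immediate for Lemma~\ref{cons}, and via the tower property for Lemmas~\ref{ds_PL} and~\ref{GT_final_PL}. Then I rescale so that the normalized coordinates defining $\mb{v}_k$ appear on both sides. Dividing Lemma~\ref{cons} by $n$ yields the first coordinate inequality with $\tfrac{1+\lambda^2}{2}$ multiplying $[\mb{v}_k]_1$ and $\tfrac{2\a_k^2\lambda^2}{1-\lambda^2}$ multiplying $\E[\|\y_{k+1}-\J\y_{k+1}\|^2/n]$; since $[\mb{v}_k]_3$ carries a further normalization by $L^2$, this coefficient must be multiplied by $L^2$, producing $[\mb{H}_k]_{1,3}=\tfrac{2\a_k^2\lambda^2 L^2}{1-\lambda^2}$. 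Lemma~\ref{ds_PL}, read with both sides divided by $L$, produces the second row of $\mb{H}_k$ (note that the cross term $\tfrac{\a_k L}{2}\cdot\|\x_k-\J\x_k\|^2/n$ is already in the correct normalization and gives $[\mb{H}_k]_{2,1}$) along with the second entry $\a_k^2\nu_a^2/(2n)$ of $\mb{u}_k$. Lemma~\ref{GT_final_PL} is already written exactly in the three normalized coordinates, so it contributes the third row of $\mb{H}_k$ and the third entry $6\nu_a^2/L^2$ of $\mb{u}_k$ verbatim. Concatenating the three scalar inequalities yields the asserted matrix-vector inequality.

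It remains to justify the step-size hypothesis. The bound $\a_k\le 1/(2L)$ is required for Lemma~\ref{ds_PL}, and $\a_k\le\min\{1,(1-\lambda^2)/(12\lambda)\}\cdot 1/(2L)$ is required for Lemma~\ref{GT_final_PL}; both are subsumed by the stated hypothesis. The additional term $\tfrac{(1-\lambda^2)^2}{4\sqrt{6}\lambda^2}\cdot\tfrac{1}{2L}$ is not needed for the inequality itself; it is imposed anticipatorily, so that the downstream analysis of the recursion (e.g., constructing a positive left eigenvector to apply Lemma~\ref{rho_bound} and obtain $\rho(\mb{H}_k)<1$, analogous to Lemma~\ref{st} in the general non-convex case) can proceed without revisiting the step-size restriction. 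Because each component bound is already in hand, there is no real technical obstacle; the only care required is to keep the $n$, $L$, and $L^2$ normalizations consistent so that the off-diagonal entries of $\mb{H}_k$ align with the statement.
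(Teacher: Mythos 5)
Your proposal is correct and coincides with the paper's own (implicit) argument: Proposition~\ref{LTI} is obtained exactly by taking total expectations in Lemma~\ref{cons}, Lemma~\ref{ds_PL}, and Lemma~\ref{GT_final_PL}, normalizing by $n$, $L$, and $nL^2$ so the coordinates of $\mb v_k$ appear, and stacking the three rows; your accounting of the $L^2$ factor in $[\mb H_k]_{1,3}$, the index alignment between $\y_{k+1}$ and $\y_{k+2}$, and the observation that only the first two terms of the step-size minimum are actually used here (the third being carried along for the downstream spectral-radius analysis) all match the paper.
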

\noindent
In the following lemma, we find the range of the step-size~$\a_k$ such that~$\rho(\mb{H}_k)<1, \forall k\geq0$, with the help of Lemma~\ref{rho_bound}. 
\begin{lem}\label{rho_3}
Let Assumption~\ref{f}-\ref{PL} hold. If the step-size sequence~$\a_k$ satisfies for all~$k$ that
\begin{align}\label{a_bar}
0< \a_k\leq \ol{\a} := \min\left\{\frac{1}{2L},\frac{(1-\lambda^2)^2}{42\lambda^2 L}, \frac{1-\lambda^2}{24\lambda L\kappa^{1/4}},\frac{1-\lambda^2}{2\mu}\right\},  
\end{align}
then we have:~$\rho(\mb H_k)\leq 1-\frac{\mu\a_k}{2} < 1$,~$\forall k\geq0$. 
\end{lem}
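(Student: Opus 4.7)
The plan is to invoke Lemma~\ref{rho_bound} with a target scalar $z = 1 - \mu\a_k/2$, exhibiting an explicit positive vector $\mb{s} = [s_1,s_2,s_3]^\top\in\R^3$ such that $\mb{H}_k\mb{s}\leq z\mb{s}$ entrywise. Writing the three component inequalities, the second row $\frac{\a_k L}{2}s_1 + (1-\mu\a_k)s_2 \leq (1-\mu\a_k/2)s_2$ simplifies to $s_2\geq\kappa s_1$, which naturally suggests fixing $s_1=1$ and $s_2=\kappa$; in fact with this choice the middle row holds with equality. The first row then reduces to $\frac{2\a_k^2\lambda^2 L^2}{1-\lambda^2}s_3 \leq (\frac{1-\lambda^2}{2}-\frac{\mu\a_k}{2})s_1$, and I would use the bound $\a_k\leq\frac{1-\lambda^2}{2\mu}$ from~\eqref{a_bar} to guarantee $\frac{1-\lambda^2}{2}-\frac{\mu\a_k}{2}\geq\frac{1-\lambda^2}{4}$; this motivates the tight choice $s_3 = \frac{(1-\lambda^2)^2}{8\a_k^2\lambda^2 L^2}$, under which the first row holds with slack $\mu\a_k/2$.

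It then remains to verify the third row, which, after again absorbing $\mu\a_k/2$ into $\frac{1-\lambda^2}{2}$, reduces to the scalar requirement
\begin{align*}
\frac{27\lambda^2}{1-\lambda^2} + \frac{24\lambda^2\a_k^2 L^2 \kappa}{1-\lambda^2} \leq \frac{(1-\lambda^2)^3}{32\a_k^2\lambda^2 L^2}.
\end{align*}
Multiplying through by $\frac{32\a_k^2\lambda^2 L^2}{(1-\lambda^2)^3}$ turns this into
\begin{align*}
\frac{864\lambda^4\a_k^2L^2}{(1-\lambda^2)^4} + \frac{768\lambda^4\a_k^4 L^4\kappa}{(1-\lambda^2)^4} \leq 1,
\end{align*}
which I would verify termwise: the step-size bound $\a_k\leq\frac{(1-\lambda^2)^2}{42\lambda^2 L}$ forces the first term below $1/2$, and the bound $\a_k\leq\frac{1-\lambda^2}{24\lambda L\kappa^{1/4}}$, whose $\kappa^{1/4}$ factor is precisely tuned so that $\a_k^4 L^4\kappa$ is dominated by $(1-\lambda^2)^4$, forces the second term below $1/2$.

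This exhausts all four bounds collected in the definition of $\ol{\a}$: the $\frac{1}{2L}$ cap is inherited from the validity range of Lemma~\ref{GT_final_PL}; $\frac{(1-\lambda^2)^2}{42\lambda^2 L}$ and $\frac{1-\lambda^2}{24\lambda L\kappa^{1/4}}$ each control one term in the third-row scalar check; and $\frac{1-\lambda^2}{2\mu}$ supplies the $\mu\a_k/2$ slack in rows one and three. Having verified $\mb{H}_k\mb{s}\leq(1-\mu\a_k/2)\mb{s}$ componentwise with $\mb{s}$ as above, the second part of Lemma~\ref{rho_bound} yields $\rho(\mb{H}_k)\leq 1-\mu\a_k/2$, and positivity of $\mu\a_k$ gives the strict inequality $\rho(\mb{H}_k)<1$. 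The main obstacle is purely algebraic bookkeeping, namely selecting the $\mb{s}$-components (particularly $s_3$, which must be large enough to dominate row three but small enough that row one still permits $\mu\a_k/2$ slack) so that all four step-size bounds enter in a non-redundant way; the $\kappa^{1/4}$ exponent in~\eqref{a_bar} is the tell-tale sign that the product $\a_k^4 L^4\kappa$ is the binding term in the analysis.
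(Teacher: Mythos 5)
Your proposal is correct and follows essentially the same route as the paper: both invoke Lemma~\ref{rho_bound} with a positive vector whose first two components are forced to $(1,\kappa)$ by the middle row, both use $\a_k\leq\tfrac{1-\lambda^2}{2\mu}$ to reserve a $\tfrac{1-\lambda^2}{4}$ slack, and your final scalar check $\tfrac{864\lambda^4\a_k^2L^2}{(1-\lambda^2)^4}+\tfrac{768\lambda^4\a_k^4L^4\kappa}{(1-\lambda^2)^4}\leq1$ is exactly the paper's condition after rescaling. The only cosmetic difference is that you take the third component as large as row one permits and then verify row three, whereas the paper takes it as small as row three requires and then verifies row one; the two choices lead to the identical inequality on $\a_k$.
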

\begin{proof}
In the light of Lemma~\ref{rho_bound}, we solve for the range of the step-size~$\a_k$ and a positive vector~$\bds\delta = [\delta_1,\delta_2,\delta_3]$ such that~$\H_k\bds\delta\leq\left(1-\frac{\mu\a_k}{2}\right)\bds\delta$, which may be written as
\begin{align}
&\frac{\mu\a_k}{2} + \frac{2\a_k^2\lambda^2L^2}{1-\lambda^2}\frac{\delta_3}{\delta_1} \leq~\frac{1-\lambda^2}{2}, 
\label{v1'}\\
&\kappa\delta_1 \leq~\delta_2,
\label{v2'}\\
&\frac{\mu\a_k}{2}\leq~\frac{1-\lambda^2}{2}-\frac{27\lambda^2}{1-\lambda^2}\frac{\delta_1}{\delta_3} 
-\dfrac{24\lambda^2\a_k^2L^2}{1-\lambda^2}\frac{\delta_2}{\delta_3}.
\label{v3'}
\end{align}
According to~\eqref{v2'}, we fix~$\delta_1 = 1$ and~$\delta_2 = \kappa$. We now impose that~$0<\a_k\leq\tfrac{1-\lambda^2}{2\mu}, \forall k\geq0$. Then, according to~\eqref{v3'}, we choose~$\delta_3>0$ such that~$\frac{27\lambda^2}{1-\lambda^2}\frac{1}{\delta_3} 
+\frac{24\lambda^2\a_k^2L^2}{1-\lambda^2}\frac{\kappa}{\delta_3} \leq \frac{1-\lambda^2}{4}.$
It suffices to fix~$\delta_3 = \frac{108\lambda^2}{(1-\lambda^2)^2} + \frac{96\lambda^2\a_k^2L^2\kappa}{(1-\lambda^2)^2}$. Now, we use the fixed values of~$\delta_1,\delta_2,\delta_3$
and the requirement that~$0<\a_k\leq\tfrac{1-\lambda^2}{2\mu}$ to solve the range of~$\a_k$ such that~\eqref{v1'} holds, i.e., 
\begin{align*}
\frac{216\a_k^2\lambda^4L^2}{(1-\lambda^2)^3}
+ \frac{192\a_k^4\lambda^4L^4\kappa}{(1-\lambda^2)^3}
\leq~\frac{1-\lambda^2}{4}.    
\end{align*}
It therefore suffices to choose~$\a_k$ such that~
\begin{align*}
0< \a_k\leq\min\left\{\frac{1-\lambda^2}{6\lambda L\kappa^{1/4}},\frac{(1-\lambda^2)^2}{42\lambda^2 L}\right\}.
\end{align*}
Summarizing the obtained upper bounds on~$\a_k$ in the discussion completes the proof.
\end{proof}
We note that~$\ol{\a}$ defined in~\eqref{a_bar} is the same as the one given in Theorem~\ref{conv_PL}. The following lemma drives upper bounds on several important quantities.
\begin{lem}\label{I-H}
Let Assumption~\ref{f}-\ref{PL} hold. If~$0< \a_k\leq\ol{\a}$, where~$\ol{\a}$ is given in~\eqref{a_bar}, then we have:~$\forall k\geq0$,
\begin{align*}
\big[\left(\I_3-\H_k\right)^{-1}\mb{u}_k\big]_1 
\leq&~\frac{288\lambda^4\a_k^5L^3\kappa\nu_a^2}{n(1-\lambda^2)^4} + \frac{144\a_k^2\lambda^2\nu_a^2}{(1-\lambda^2)^3},
\n\\
\big[\left(\I_3-\H_k\right)^{-1}\mb{u}_k\big]_2 
\leq&~\frac{3\a_k\nu_a^2}{2\mu n} + \frac{72\lambda^2\a_k^2\kappa\nu_a^2}{(1-\lambda^2)^3}. 
\end{align*}
\end{lem}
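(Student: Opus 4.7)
The plan is to invert the $3\times 3$ matrix $\I_3-\H_k$ via the adjugate formula, $(\I_3-\H_k)^{-1}=(\I_3-\H_k)^{*}/\det(\I_3-\H_k)$, and then exploit the sparsity of~$\mb{u}_k$. Since $[\mb{u}_k]_1=0$, both target quantities reduce to a linear combination of only four entries of $(\I_3-\H_k)^{-1}$, namely the $(1,2),(1,3),(2,2),(2,3)$ entries. Correspondingly, I only need the four cofactors $C_{2,1},C_{3,1},C_{2,2},C_{3,2}$ of $\I_3-\H_k$ together with a single uniform lower bound on $\det(\I_3-\H_k)$.

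The main step is the determinant bound. Expanding along column $2$, which is supported only at rows $2$ and $3$ with entries $\mu\a_k$ and $-\tfrac{24\lambda^2\a_k^{2}L^{2}}{1-\lambda^2}$, a direct computation gives
\begin{align*}
\det(\I_3-\H_k) \;=\; \frac{\mu\a_k(1-\lambda^2)^2}{4} \;-\; \frac{54\mu\a_k^{3}\lambda^4 L^{2}}{(1-\lambda^2)^2} \;-\; \frac{24\a_k^{5}\lambda^4 L^{5}}{(1-\lambda^2)^2}.
\end{align*}
The multi-part definition of $\ol{\a}$ in~\eqref{a_bar} is designed precisely so that each correction is absorbed into a small constant fraction of the leading term: the clause $\a_k\le\tfrac{(1-\lambda^2)^2}{42\lambda^2L}$ bounds the first correction by $\tfrac{54}{1764}\mu\a_k(1-\lambda^2)^2$, while the clause $\a_k\le\tfrac{1-\lambda^2}{24\lambda L\kappa^{1/4}}$ together with $L/\mu=\kappa$ bounds the second correction by $\tfrac{1}{24^{3}}\mu\a_k(1-\lambda^2)^2$. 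Combining these, I would obtain the uniform lower bound $\det(\I_3-\H_k)\ge \mu\a_k(1-\lambda^2)^2/12$.

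For the cofactors, a routine expansion of $2\times 2$ minors yields
\begin{align*}
C_{2,1}=\tfrac{48\a_k^{4}\lambda^{4}L^{4}}{(1-\lambda^2)^2},\qquad C_{3,1}=\tfrac{2\mu\a_k^{3}\lambda^{2}L^{2}}{1-\lambda^2},\qquad C_{2,2}\le\tfrac{(1-\lambda^2)^2}{4},\qquad C_{3,2}=\tfrac{\a_k^{3}\lambda^{2}L^{3}}{1-\lambda^2},
\end{align*}
where the estimate on $C_{2,2}$ merely drops the nonpositive correction that also appears in it. Substituting these cofactors and the determinant lower bound into
\begin{align*}
[(\I_3-\H_k)^{-1}\mb u_k]_{i} \;=\; \frac{C_{2,i}}{\det(\I_3-\H_k)}\cdot\frac{\a_k^{2}\nu_a^{2}}{2n} \;+\; \frac{C_{3,i}}{\det(\I_3-\H_k)}\cdot\frac{6\nu_a^{2}}{L^{2}},\qquad i\in\{1,2\},
\end{align*}
and rewriting $L/\mu$ as $\kappa$, both claimed inequalities drop out after a short algebraic simplification.

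The only genuinely delicate step is the determinant lower bound; each piece of the multi-part cap on $\ol{\a}$ corresponds to exactly one term that could otherwise ruin this estimate, so the argument hinges on unpacking those clauses in the correct order. Everything else, namely the four cofactor computations and the final matrix-vector product, is mechanical once this bound is in place.
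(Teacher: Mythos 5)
Your proposal is correct and follows essentially the same route as the paper: the same adjugate/determinant inversion, the same determinant lower bound $\det(\I_3-\H_k)\geq\mu\a_k(1-\lambda^2)^2/12$ obtained from the same two clauses of $\ol{\a}$, and the same four cofactor values (matching the paper's $[\ul{\mb H}^*]_{1,2},[\ul{\mb H}^*]_{1,3},[\ul{\mb H}^*]_{2,2},[\ul{\mb H}^*]_{2,3}$). Your writeup is in fact more explicit than the paper's about which step-size clause absorbs which correction term in the determinant, but the argument is the same.
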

\begin{proof}
By the definition of~$\H_k$ in Proposition~\ref{LTI},  
we first compute the determinant of~$(\I_3 - \mb{H}_k)$:~$\forall k\geq0$,
\begin{align*}
\det(\I_3 - \mb{H}_k) 
=&~\frac{\mu\a_k(1-\lambda^2)^2}{4}
-\frac{24\a_k^5L^5\lambda^4}{(1-\lambda^2)^2}
-\frac{54\mu\a_k^3L^2\lambda^4}{(1-\lambda^2)^2}\n\\
\geq&~\frac{\mu\a_k(1-\lambda^2)^2}{12}.
\end{align*}
if~$0< \a_k\leq\ol{\a}$, where~$\ol{\a}$ is given in~\eqref{a_bar}.
Moreover, the adjugate of~$\I_3-\H_k$, denoted as~$\ul{\mb H}^*$, is given by
\begin{align*}
\left[\ul{\mb H}^*\right]_{1,2}
&= \dfrac{48\lambda^4\a_k^4L^4}{(1-\lambda^2)^2},\qquad
\left[\ul{\mb H}^*\right]_{1,3}
= \dfrac{2\mu\a_k^3\lambda^2L^2}{1-\lambda^2}, \nonumber\\
\left[\ul{\mb H}^*\right]_{2,2}
&\leq\frac{(1-\lambda^2)^2}{4},\qquad
\left[\ul{\mb H}^*\right]_{2,3}
=\dfrac{\a_k^3L^3\lambda^2}{1-\lambda^2}. 
\end{align*}
The proof follows by~$(\I_3-\H_k)^{-1} = \ul{\mb H}^*/\det\left(\I_3-\H_k\right)$ and the definition of~$\mb{u}_k$ given in Proposition~\ref{LTI}.
\end{proof}
\noindent
\noindent
We are now ready to prove Theorem~\ref{conv_PL} that characterizes the performance of~\DSGT~under a constant step-size.
\begin{TH1}
We consider a constant step-size such that~$\a_k = \a, \forall k\geq0$,  with~$0<\a\leq\ol{\a}$ where~$\ol{\a}$ is given in~\eqref{a_bar}. We denote~$\H_k := \H$ and~$\mb{u}_k := \mb{u}, \forall k\geq0$,
and recursively apply~\eqref{lti} from~$k$ to~$1$ to obtain:~$\forall k\geq1$,
\begin{align}\label{PL_cst}
\mb{v}_k 
\leq\H^k\mb{v}_{0} + \sum_{t=0}^{k-1}\H^t\mb{u}    
\leq\H^k\mb{v}_{0} + \left(\I_3-\H\right)^{-1}\mb{u}.
\end{align}
It is then clear that the first two statements in Theorem~\ref{conv_PL} follow by using Lemma~\ref{rho_3} and~\ref{I-H} in~\eqref{PL_cst} and the third statement in Theorem~\ref{conv_PL} follows by Lemma~\ref{F_ave}.
\end{TH1}

\section{Convergence analysis under PL condition: almost sure convergence}\label{S_PL_as}
In this section, we prove Theorem~\ref{PL_as}, i.e., the almost sure sublinear convergence rates of \textbf{\texttt{GT-DSGD}} when the global function satisfies the PL condition under a family of stochastic approximation step-sizes. 
We first establish a key fact that under appropriate step-sizes, the stochastic gradient tracking errors are uniformly bounded in mean squared across all iterations. This fact will also be used in Section~\ref{S_PL_ms_decay}.
\begin{lem}\label{Y_bounded}
Let Assumptions~\ref{f}-\ref{PL} hold. If ${0<\a_k\leq\ol{\a}}$, for~$\ol{\a}$ given in~\eqref{a_bar},
then we have:~$\sup_{k\geq0}\mbb{E}\big[\left\|\mb{y}_{k}-\mb{J}\mb{y}_{k}\right\|^2\big]
\leq\wh{y},$ where~$\wh{y}$ is a positive constant given by
\begin{align}\label{y_hat}
\wh{y} :=&~
\frac{30\lambda^2\ol{\a}^3L^3\kappa\nu_a^2}{(1-\lambda^2)^2} 
+ \frac{60n\lambda^2\ol{\a}^2L^3(F(\ol{\x}_0)-F^*)}{(1-\lambda^2)^2} \n\\
&+  \frac{16n\nu_a^2}{1-\lambda^2} + \lambda^2\|\nf_0\|^2.      
\end{align}
\end{lem}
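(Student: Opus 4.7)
The plan is to apply an induction argument directly to the nonnegative, entrywise recursion $\mathbf{v}_{k+1} \leq \mathbf{H}_k\mathbf{v}_k + \mathbf{u}_k$ from Proposition~\ref{LTI}, whose third component equals $\mathbb{E}[\|\mathbf{y}_{k+1}-\mathbf{J}\mathbf{y}_{k+1}\|^2]/(nL^2)$. Specifically, I will exhibit a single fixed positive vector $\widehat{\mathbf{v}} = [\widehat{v}_1, \widehat{v}_2, \widehat{v}_3]^\top$ and show $\mathbf{v}_k \leq \widehat{\mathbf{v}}$ componentwise for every $k \geq 0$; reading off $nL^2 \widehat{v}_3$ then yields the claimed uniform bound $\widehat{y}$.

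For the inductive step, the nonnegativity of $\mathbf{H}_k$ reduces the task to choosing $\widehat{\mathbf{v}}$ so that $\mathbf{H}_k\widehat{\mathbf{v}} + \mathbf{u}_k \leq \widehat{\mathbf{v}}$ uniformly for $\alpha_k \in (0,\bar{\alpha}]$. Spelling this out using the entries of $\mathbf{H}_k$ and $\mathbf{u}_k$ from Proposition~\ref{LTI} gives three scalar inequalities; after dividing the second one by $\alpha_k$ (to remove the non-monotone factor $1-\mu\alpha_k$) and then substituting $\alpha_k \leq \bar{\alpha}$ in the remaining monotone-increasing coefficients, I will solve for $\widehat{v}_1$ in terms of $\widehat{v}_3$ via the consensus row, for $\widehat{v}_2$ in terms of $\widehat{v}_1$ via the descent row, and back-substitute into the gradient-tracking row to arrive at a self-consistent condition of the form $(1 - c(\bar{\alpha}))\widehat{v}_3 \geq c_1\nu_a^2/L^2 + c_2\bar{\alpha}^3\lambda^2 L\kappa\nu_a^2/((1-\lambda^2)^2 n)$. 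The restrictions $\bar{\alpha} \leq (1-\lambda^2)^2/(42\lambda^2 L)$ and $\bar{\alpha} \leq (1-\lambda^2)/(24\lambda L\kappa^{1/4})$ in~\eqref{a_bar} keep $c(\bar{\alpha})$ strictly below one, producing the two noise-driven terms $\frac{30\lambda^2\bar{\alpha}^3 L^3\kappa\nu_a^2}{(1-\lambda^2)^2}$ and $\frac{16n\nu_a^2}{1-\lambda^2}$ in $\widehat{y}$.

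The base case requires $\widehat{\mathbf{v}} \geq \mathbf{v}_0$. Since all nodes initialize at the same point, $\mathbf{v}_0[1] = 0$ and $\mathbf{v}_0[2] = (F(\bar{\mathbf{x}}_0)-F^*)/L$; and using $\mathbf{y}_1 = \mathbf{W}\mathbf{g}_0$, unbiasedness, and $\|\mathbf{W}-\mathbf{J}\| = \lambda$ exactly as in the computation~\eqref{y1} inside the proof of Lemma~\ref{consensus_acc} gives $\mathbf{v}_0[3] \leq \lambda^2(\|\nabla\mathbf{f}_0\|^2 + n\nu_a^2)/(nL^2)$. To make $\widehat{\mathbf{v}}$ dominate both the steady-state forcing and $\mathbf{v}_0$ while preserving the inductive inequality, I will write $\widehat{\mathbf{v}} = \mathbf{w}^{\mathrm{ss}} + \mathbf{w}^{\mathrm{init}}$, where $\mathbf{w}^{\mathrm{ss}}$ handles $\mathbf{u}_k$ as above and $\mathbf{w}^{\mathrm{init}}$ sits in the invariant cone $\mathbf{H}_k\mathbf{w}^{\mathrm{init}} \leq \mathbf{w}^{\mathrm{init}}$ while dominating $\mathbf{v}_0$. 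The vector $\mathbf{w}^{\mathrm{init}}$ is obtained by scaling up the Lyapunov direction $\boldsymbol{\delta}$ already exhibited in the proof of Lemma~\ref{rho_3} (with $\delta_3$ evaluated at $\alpha_k=\bar{\alpha}$), and its third coordinate contributes the remaining two terms $\lambda^2\|\nabla\mathbf{f}_0\|^2$ and $\frac{60n\lambda^2\bar{\alpha}^2 L^3(F(\bar{\mathbf{x}}_0)-F^*)}{(1-\lambda^2)^2}$ of $\widehat{y}$.

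The main technical obstacle is that the inductive inequality must hold simultaneously for all admissible $\alpha_k$, and because $[\mathbf{H}_k]_{2,2} = 1-\mu\alpha_k$ decreases with $\alpha_k$ while $[\mathbf{u}_k]_2 = \alpha_k^2\nu_a^2/(2n)$ increases, one cannot simply replace $\alpha_k$ by $\bar{\alpha}$ everywhere. The trick of dividing the descent-row inequality by $\alpha_k$ before substituting the step-size bound resolves this cleanly—after that step all remaining $\alpha_k$-dependence is monotone and can be uniformly bounded by $\bar{\alpha}$—and matching the specific prefactors in~\eqref{y_hat} is then a matter of careful linear algebra.
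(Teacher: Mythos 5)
Your proposal is correct and follows essentially the same route as the paper: an induction showing $\mb{v}_k\leq\wh{\mb v}$ for the recursion of Proposition~\ref{LTI}, with the descent row divided by $\a_k$ to eliminate the non-monotone dependence on the step-size, the remaining coefficients bounded by $\ol{\a}$, and the base case handled via the computation in~\eqref{y1}. The only cosmetic difference is that you split $\wh{\mb v}$ into a steady-state part and an invariant-cone part dominating $\mb{v}_0$, whereas the paper folds the initial-condition terms directly into its choices of $\wh{v}_2$ and $\wh{v}_3$; the two are equivalent by linearity.
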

\begin{proof}
We prove by mathematical induction that for the state vector~$\mb{v}_k$ defined in Proposition~\ref{LTI}, there exists some positive constant vector~$\wh{\mb v}= [\wh{v}_1,\wh{ v}_2,\wh{ v}_3]^\top$ such that
\begin{align}\label{induction_ybound}
\mb{v}_k \leq \wh{\mb v}, \qquad\forall k\geq0.  
\end{align}
if~$0< \a_k\leq\ol{\a}$, where~$\ol{\a}$ is given in~\eqref{a_bar}.
We first note that in order to make~\eqref{induction_ybound} hold when~$k = 0$, according to the definition of~$\mb{v}_0$ and~\eqref{y1}, it suffices to choose~$\wh{\mb v}$ such that
\begin{align}\label{v_hat_0}
\wh{\mb v}^\top \geq \left[0, \frac{F(\ol{\x}_0) - F^*}{L}, \frac{\lambda^2\nu_a^2}{L^2} + \frac{\lambda^2\|\nf_0\|^2}{nL^2}
\right].    
\end{align}
Next, we show that if~$\mb{v}_k \leq \wh{\mb v}$ for some~$k\geq0$ and then we also have~$\mb{v}_{k+1} \leq \wh{\mb v}$ with an appropriate choice of~$\wh{\mb{v}}$. In light of Proposition~\ref{LTI}, we have~$\mb{v}_{k+1} \leq \H_k\mb{v}_k + \mb{u}_k\leq\H_k\wh{\mb v} + \mb{u}_k$, and hence it suffices to choose~$\wh{\mb v}$ such that~$\H_k\wh{\mb v} + \mb{u}_k \leq\wh{\mb v},\forall k$, which is equivalent to the following set of inequalities:
\begin{align}
&\frac{2\a_k^2\lambda^2L^2}{1-\lambda^2}\wh{v}_3 \leq \frac{1-\lambda^2}{2}\wh{v}_1, \label{v_sol_10} \\
&\frac{\kappa}{2}\wh{v}_1 + \frac{\a_k\nu_a^2}{2\mu n} \leq \wh{v}_2, \label{v_sol_20}\\
&\frac{27\lambda^2}{1-\lambda^2}\wh{v}_1 + \frac{24\lambda^2\a_k^2L^2}{1-\lambda^2}\wh{v}_2 + \frac{6\nu_a^2}{L^2}
\leq \frac{1-\lambda^2}{2}\wh{v}_3,
\label{v_sol_30}
\end{align}
where~$0<\a_k\leq\ol{\a}$ and~$\kappa = L/\mu$. 
First, we note that to make~\eqref{v_sol_10} hold, it suffices to choose~$\wh{v}_1$ as
\begin{align}\label{v_1}
\wh{v}_1 = \frac{4\ol{\a}^2\lambda^2L^2}{(1-\lambda^2)^2}\wh{v}_3.    
\end{align}
Second, based on~\eqref{v_hat_0},~\eqref{v_sol_20}, and~\eqref{v_1}, we choose~$\wh{v}_2$ as
\begin{align}\label{v_2}
\wh{v}_2 = \frac{2\ol{\a}^2\lambda^2L^2\kappa}{(1-\lambda^2)^2}\wh{v}_3 + \frac{\ol{\a}\nu_a^2}{2\mu n} + \frac{F(\ol{\x}_0)-F^*}{L}.    
\end{align}
Third, to make~\eqref{v_sol_30} hold, it suffices to choose~$\wh{v}_3$ such that 
\begin{align}\label{v_sol_31}
\wh{v}_3\geq\frac{54\lambda^2}{(1-\lambda^2)^2}\wh{v}_1 + \frac{48\lambda^2\ol{\a}^2L^2}{(1-\lambda^2)^2}\wh{v}_2 + \frac{12\nu_a^2}{L^2(1-\lambda^2)},    
\end{align}
which, using~\eqref{v_1} and~\eqref{v_2}, is equivalent to
\begin{align}\label{v_sol_3f}
\wh{v}_3\geq&~
\frac{216\ol{\a}^2\lambda^4L^2}{(1-\lambda^2)^4}\wh{v}_3  +\frac{96\lambda^4\ol{\a}^4L^4\kappa}{(1-\lambda^2)^4}\wh{v}_3 
+\frac{24\lambda^2\ol{\a}^3L\kappa\nu_a^2}{n(1-\lambda^2)^2} \n\\
&+ \frac{48\lambda^2\ol{\a}^2L(F(\ol{\x}_0)-F^*)}{(1-\lambda^2)^2} +  \frac{12\nu_a^2}{L^2(1-\lambda^2)}.    
\end{align}
By the definition of~$\ol{\a}$ in~\eqref{a_bar}, we have~$\frac{216\ol{\a}^2\lambda^4L^2}{(1-\lambda^2)^4}\leq\frac{6}{49}$ and that $\frac{96\ol{\a}^4\lambda^4L^4\kappa}{(1-\lambda^2)^4}\leq\frac{1}{3456}$; therefore, to make~\eqref{v_sol_3f} hold, it suffices to choose~$\wh{v}_3$ such that
\begin{align*}
\wh{v}_3\geq
\frac{30\lambda^2\ol{\a}^3L\kappa\nu_a^2}{n(1-\lambda^2)^2} 
+\! \frac{60\lambda^2\ol{\a}^2L(F(\ol{\x}_0)-F^*)}{(1-\lambda^2)^2} + \! \frac{15\nu_a^2}{L^2(1-\lambda^2)}.      
\end{align*}
Based on the above inequality and~\eqref{v_hat_0}, we choose~$\wh{v}_3$ as
\begin{align*}
\wh{v}_3 =&~
\frac{30\lambda^2\ol{\a}^3L\kappa\nu_a^2}{n(1-\lambda^2)^2} 
+ \frac{60\lambda^2\ol{\a}^2L(F(\ol{\x}_0)-F^*)}{(1-\lambda^2)^2} \n\\
&+  \frac{16\nu_a^2}{L^2(1-\lambda^2)} + \frac{\lambda^2\|\nf_0\|^2}{nL^2}.      
\end{align*}
The induction is complete and the proof then follows by the definition of~$\mb{v}_k$ in Proposition~\ref{LTI}.
\end{proof}

\noindent
We prove Theorem~\ref{PL_as} using the Robbins-Siegmund almost supermartingale convergence theorem~\cite{RS_theorem}, presented as follows. 

\begin{lem}[\textbf{Robbins-Siegmund}]\label{RS}
Let~$(\Omega,\F,\{\F_k\},\P)$ be a filtered space. Suppose that~$Z_k$,~$B_k$,~$C_k$ and~$D_k$ are nonnegative and~$\F_k$-measurable random variables such that
\begin{align*}
\E\left[Z_{k+1}|\F_k\right] \leq \left( 1 + B_k\right)Z_k + C_k - D_k,~\forall k\geq0.
\end{align*}
Then on the event $\left\{\sum_{k=0}^{\infty}B_k<\infty,~\sum_{k=0}^{\infty}C_k<\infty\right\}$, we have that~$\lim_{k\ra\infty}Z_k$ exists and is finite almost surely, and that $\sum_{k=0}^\infty D_k<\infty$ almost surely.
\end{lem}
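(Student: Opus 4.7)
The plan is to recast the almost-supermartingale inequality as a genuine supermartingale via a multiplicative change of variables, and then extract almost-sure convergence from Doob's martingale convergence theorem after localizing to circumvent the fact that the event on which the conclusion is asserted is itself random.

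First, I would introduce the $\F_k$-measurable random variable $M_k := \prod_{j=0}^{k-1}(1+B_j)^{-1}$ (with $M_0 = 1$). Multiplying the hypothesis by the $\F_k$-measurable factor $M_{k+1}$ and using $M_{k+1}(1+B_k) = M_k$ yields
\begin{align*}
\E\left[M_{k+1} Z_{k+1} \mid \F_k\right] \leq M_k Z_k - M_{k+1} D_k + M_{k+1} C_k.
\end{align*}
I would then set
\begin{align*}
V_k := M_k Z_k + \sum_{j=0}^{k-1} M_{j+1} D_j - \sum_{j=0}^{k-1} M_{j+1} C_j,
\end{align*}
and verify by direct computation that $\E[V_{k+1} \mid \F_k] \leq V_k$, so $\{V_k\}$ is an $\{\F_k\}$-supermartingale. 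The obstacle is that $V_k$ has an uncontrolled negative part in general (from the $-\sum M_{j+1}C_j$ term), so I cannot feed $V_k$ directly into the martingale convergence theorem.

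To handle this, I would localize. For each integer $L \geq 1$, define the stopping time
\begin{align*}
\tau_L := \inf\Big\{k \geq 0 \,:\, \sum_{j=0}^{k-1} B_j + \sum_{j=0}^{k-1} C_j > L \Big\},
\end{align*}
with $\inf\emptyset = \infty$; this is an $\{\F_k\}$-stopping time since $B_j, C_j$ are $\F_j$-measurable. On $\{k \leq \tau_L\}$ one has $M_k \geq e^{-L}$ (using $\log(1+x) \leq x$) and $\sum_{j=0}^{k-1} M_{j+1} C_j \leq L$. Hence the stopped process $\{V_{k \wedge \tau_L}\}_{k \geq 0}$ is a supermartingale bounded below by $-L$ uniformly in $k$, and Doob's martingale convergence theorem yields that $V_{k \wedge \tau_L}$ converges a.s.\ to a finite limit as $k \to \infty$.

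Restricting to the event $\Omega_L := \{\tau_L = \infty\}$, the unstopped process $V_k$ converges a.s.\ to a finite limit. The two nonnegative monotone series $\sum_j M_{j+1} D_j$ and $\sum_j M_{j+1} C_j$ are increasing in $k$; the latter is bounded by $L$ on $\Omega_L$, so both converge to finite limits there, and consequently $M_k Z_k$ converges a.s.\ to a finite nonnegative limit. Because $\sum B_j \leq L$ forces $M_k \to M_\infty \in [e^{-L},1]$ on $\Omega_L$, this gives $Z_k \to Z_\infty$ a.s.\ with $Z_\infty < \infty$; and the lower bound $M_{j+1} \geq e^{-L}$ on $\Omega_L$ also implies $\sum_j D_j < \infty$ a.s.\ on $\Omega_L$. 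Finally, the $\Omega_L$ are nested with $\bigcup_{L \in \mbb{N}} \Omega_L = \{\sum_k B_k < \infty,\; \sum_k C_k < \infty\}$ (up to a null set), so taking the countable union over $L$ completes the proof. The principal difficulty, as indicated, is converting the random event hypothesis into a deterministic setting suitable for Doob's theorem, and this is precisely what the localization via $\tau_L$ accomplishes.
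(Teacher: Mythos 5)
The paper does not prove this lemma: it is quoted from Robbins and Siegmund~\cite{RS_theorem} and used as a black box, so there is no in-paper argument to compare yours against. Your proposal is, in substance, the classical proof of that theorem --- normalize by $M_k=\prod_{j<k}(1+B_j)^{-1}$ to manufacture a supermartingale $V_k$, localize so that the stopped process is bounded below, invoke Doob's convergence theorem, and exhaust the asserted (random) event by a countable union of localization levels --- and the strategy is sound.

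One step fails as written, though it is an off-by-one rather than a conceptual gap. With $\tau_L:=\inf\{k:\sum_{j=0}^{k-1}B_j+\sum_{j=0}^{k-1}C_j>L\}$, the event $\{k\le\tau_L\}$ only guarantees $\sum_{j=0}^{k-2}(B_j+C_j)\le L$; the final increments $B_{k-1}$ and $C_{k-1}$ are uncontrolled there, so neither $M_k\ge e^{-L}$ nor $\sum_{j=0}^{k-1}M_{j+1}C_j\le L$ follows on that event, and the stopped process need not be bounded below by $-L$ at time $\tau_L$. The fix is to shift the condition by one index, $\tau_L:=\inf\{k:\sum_{j=0}^{k}(B_j+C_j)>L\}$, which is still a stopping time because the $k$-th condition is $\F_k$-measurable; then $\{k\le\tau_L\}$ does yield $\sum_{j=0}^{k-1}(B_j+C_j)\le L$, all of your bounds hold, and $\{\tau_L=\infty\}=\{\sum_{j\ge0}(B_j+C_j)\le L\}$, so the union over $L\in\mbb{N}$ is exactly the asserted event. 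Two smaller points worth making explicit: the finiteness of $\lim_k\sum_{j<k}M_{j+1}D_j$ on $\Omega_L$ comes from the bound $\sum_{j<k}M_{j+1}D_j\le V_k+\sum_{j<k}M_{j+1}C_j\le\sup_k V_k+L$ (monotonicity alone does not give a finite limit), and Doob's theorem requires $V_{k\wedge\tau_L}$ to be integrable, i.e., $\E[Z_k]<\infty$ --- an assumption implicit in the lemma's statement and satisfied where the paper applies it.
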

We are now ready to present the proof of Theorem~\ref{PL_as}, where we construct appropriate almost supermartingales that characterize the sample path-wise convergence rate of \textbf{\texttt{GT-DSGD}} under a family of stochastic approximation step-sizes.  
\begin{TH2}
We consider the step-size sequence~$\{\a_k\}$ of the following form:~$\forall k\geq0$,
\begin{align}\label{ak_as}
\a_k = \delta(k+\varphi)^{-\epsilon},\quad
\text{where}~\delta\geq1/\mu~\text{and}~\epsilon\in(0.5,1],
\end{align}
such that~$\varphi\geq\max\{(\delta/\ol{\a})^{1/\epsilon},\frac{4}{1-\lambda^2}\}$. Hence,~$0<\a_k\leq\ol{\a}$ for $\ol{\a}$ given in~\eqref{a_bar}. We construct~$\F_k$-adapted processes:~$\forall k\geq0$,
\begin{align*}
&R_k := (k+\vp)^\tau\wt{x}_k := (k+\vp)^\tau n^{-1}\ln\x_k - \mb{J}\x_k\rn^2, \\
&Q_k := (k+\vp)^\tau\Delta_k := (k+\vp)^\tau L^{-1}(F(\ol{\x}_k)-F^*), \end{align*}
where~$\tau = 2\epsilon-1-\epsilon_1$, where~$\epsilon_1\in(0,2\epsilon-1)$ is an arbitrarily small constant. By~$1+x\leq e^x,\forall x\in\R$, we have~$(k+\vp+1)^\tau = (k+\vp)^\tau\big(1+\tfrac{1}{k+\vp}\big)^\tau \leq (k+\vp)^\tau e^{\frac{\tau}{k+\vp}}.$ 
Since~$0<\frac{\tau}{k+\vp}\leq1$, we have:~$\forall k\geq0$,
\begin{align}\label{b1}
(k+\vp+1)^\tau \leq e(k+\vp)^\tau.
\end{align}
Further, by~$e^x\leq 1+x+x^2$ for $0\leq x\leq1$,\footnote{Note that~$e^x = 1 + x + x^2\sum_{k=2}^\infty\frac{x^{k-2}}{k!},\forall x\in\R$. If~$0\leq x\leq1$, then we have~$e^x \leq 1 + x + x^2\sum_{k=2}^\infty\frac{1}{k!} = 1 + x + (e-2)x^2\leq 1 + x + x^2.$}  we have:~$\forall k\geq0$,
\begin{align}\label{b2}
(k+\vp+1)^\tau 
\leq 
\bigg(1 + \frac{\tau}{k+\vp}+\frac{\tau^2}{(k+\vp)^2}\bigg)(k+\vp)^\tau.  
\end{align}
\textbf{Recursion of~$R_k$.} We use Lemma~\ref{Y_bounded} in Lemma~\ref{cons} with the definition of~$\a_k$ in~\eqref{ak_as} to obtain:~$\forall k\geq0$,
\begin{align}\label{as_cons_rate_00}
\E\left[\wt{x}_{k+1}\right] 
\leq&~\dfrac{1+\lambda^2}{2}\E\left[\wt{x}_{k}\right]
+ \dfrac{2\lambda^2\wh{y}}{n(1-\lambda^2)}\frac{\delta^2}{(k+\vp)^{2\epsilon}},
\end{align}
where~$\wh{y}$ is given in~\eqref{y_hat}. We multiply~\eqref{as_cons_rate_00} by~$(k+\vp+1)^\tau$ and then apply~\eqref{b1} and~\eqref{b2} to obtain:~$\forall k\geq0$, 
\begin{align}\label{as_cons_rate_0}
\E\left[R_{k+1}\right] 
\leq&~\underbrace{\dfrac{1+\lambda^2}{2}\bigg(1+\frac{\tau}{k+\vp}+\frac{\tau^2}{(k+\vp)^2}\bigg)}_{T_k}\E\left[R_k\right]\n\\
&+ \dfrac{2e\lambda^2\wh{y}}{n(1-\lambda^2)}\frac{\delta^2}{(k+\vp)^{2\epsilon-\tau}}.
\end{align}
Since~$\vp\geq \frac{4}{1-\lambda^2}$, i.e.,~$\frac{\tau}{k+\vp}\leq\frac{1-\lambda^2}{4},\forall k\geq0$, we have
\begin{align}\label{as_cons_rate_1}
T_k=&~\bigg(1-\frac{1-\lambda^2}{2}\bigg)\bigg(1+\frac{\tau}{k+\vp}+\frac{\tau^2}{(k+\vp)^2}\bigg) \n\\
\leq&~1+\frac{\tau}{k+\vp}+\frac{\tau^2}{(k+\vp)^2} - \frac{1-\lambda^2}{2} \n\\
\leq&~1+\frac{\tau^2}{(k+\vp)^2} - \frac{1-\lambda^2}{4}.
\end{align}
Using~\eqref{as_cons_rate_1} in~\eqref{as_cons_rate_0}, we have:~$\forall k\geq0$, 
\begin{align}\label{as_cons_rate_2}
\E\left[R_{k+1}\right] 
\leq&~\left(1+\frac{\tau^2}{(k+\vp)^2}\right)\E\left[R_k\right] - \frac{1-\lambda^2}{4}\E\left[R_k\right] \n\\
&+ \dfrac{2e\lambda^2\wh{y}}{n(1-\lambda^2)}\frac{\delta^2}{(k+\vp)^{2\epsilon-\tau}}.
\end{align}
Note that~$\sum_{k=0}^\infty(k+\vp)^{-2}<\infty$ and~$\sum_{k=0}^\infty(k+\vp)^{\tau-2\epsilon}<\infty$ since~$2\epsilon-\tau>1$. Applying a special case of Lemma~\ref{RS} for deterministic recursions in~\eqref{as_cons_rate_2} leads to $\sum_{k=0}^\infty\E\left[R_k\right] < \infty.$ Since~$R_k$ is nonnegative, by monotone convergence theorem, we have~$\E\left[\sum_{k=0}^\infty R_k\right] = \sum_{k=0}^\infty\E\left[R_k\right] <\infty$ which implies 
\begin{align}\label{Rk_series}
\P\left(\sum_{k=0}^\infty R_k <\infty\right) = 1.
\end{align}
The first statement in Theorem~\ref{PL_as} then follows by~\eqref{Rk_series}.

\noindent\textbf{Recursion of~$Q_k$}. We recall from Lemma~\ref{ds_PL}:~$\forall k\geq0$,
\begin{align}\label{as_rate_0}
\E\left[\Delta_{k+1}|\F_k\right] 
\leq&~\bigg(1-\frac{\mu\delta}{(k+\vp)^\epsilon}\bigg)\Delta_{k}
+ \frac{L\delta}{2(k+\vp)^\epsilon}\wt{x}_k \n\\ &+\frac{\nu^2_a}{2n}\frac{\delta^2}{(k+\vp)^{2\epsilon}}.
\end{align}
We multiply~\eqref{as_rate_0} by~$(k+\vp+1)^\tau$ and then use~\eqref{b1} and~\eqref{b2} to obtain:~$\forall k\geq0$, 
\begin{align}\label{as_rate_1}
\E[Q_{k+1}|&\F_k] 
\leq\underbrace{\Big(1-\frac{\mu\delta}{(k+\vp)^\epsilon}\Big)\Big(1 + \frac{\tau}{k+\vp}+\frac{\tau^2}{(k+\vp)^2}\Big)}_{P_k}Q_{k} \n\\
&\qquad\quad+ \frac{eL\delta}{2(k+\vp)^\epsilon}R_k  
+\frac{e\nu^2_a}{2n}\frac{\delta^2}{(k+\vp)^{2\epsilon-\tau}}.
\end{align}
We observe that
\begin{align}\label{t1}
P_k\leq&~1 + \frac{\tau}{k+\vp}+\frac{\tau^2}{(k+\vp)^2} -\frac{\mu\delta}{(k+\vp)^\epsilon} \nonumber\\
\leq&~1 +\frac{\tau^2}{(k+\vp)^2} - \frac{\mu\delta-\tau}{(k+\vp)^\epsilon}. 
\end{align}
We use~\eqref{t1} in~\eqref{as_rate_1} to obtain:~$\forall k\geq0$,
\begin{align}\label{as_rate_2}
\E\left[Q_{k+1}|\F_k\right] 
\leq&~\bigg(1+\frac{\tau^2}{(k+\vp)^2}\bigg)Q_{k}
- \frac{\mu\delta-\tau}{(k+\vp)^\epsilon}Q_k \n\\
&+ \frac{eL\delta}{2(k+\vp)^\epsilon}R_k 
+\frac{e\nu^2_a}{2n}\frac{\delta^2}{(k+\vp)^{2\epsilon-\tau}}.
\end{align}
Recall that~$\sum_{k=0}^\infty(k+\vp)^{-2}<\infty$ and~$\sum_{k=0}^\infty(k+\vp)^{\tau-2\epsilon}<\infty$ since~$2\epsilon-\tau>1$. Note that~$\delta \geq 1/\mu$, i.e.,~$\mu\delta>\tau$,
applying Lemma~\ref{RS} in~\eqref{as_rate_2} with the help of~\eqref{Rk_series} gives: 
\begin{align}\label{Z}
\P\Big(\lim_{k\ra\infty}Q_k = Q\Big) = 1,    
\end{align}
where~$Q$ is some almost surely finite random variable, and 
\begin{align}\label{key_inf}
\P\bigg(\sum_{k=0}^{\infty}\frac{\mu\delta-\tau}{(k+\vp)^\epsilon}Q_k <\infty\bigg) = 1. 
\end{align}
Since~$\sum_{k=0}^{\infty}\frac{\mu\delta-\tau}{(k+\vp)^\epsilon} = \infty$, where~$\epsilon\in(0.5,1]$, we have 
\begin{align}\label{key_inclu}
\left\{\sum_{k=0}^{\infty}\frac{\mu\delta-\tau}{(k+\vp)^\epsilon}Q_k<\infty\right\}
\subseteq
\left\{\liminf_{k\ra\infty}Q_k = 0\right\},
\end{align}
where~``$\subseteq$" denotes the inclusion relation for two events.
By the monotonicity of~$\P(\cdot)$,~\eqref{key_inf} and~\eqref{key_inclu} lead to
\begin{align}\label{liminf}
\P\Big(\liminf_{k\ra\infty}Q_k = 0\Big) = 1.     
\end{align}
From~\eqref{liminf} and~\eqref{Z}, we conclude that~$\P\left(Q = 0\right) = 1$ 
and then the proof follows by~\eqref{Rk_series} and Lemma~\ref{F_ave}.
\end{TH2}

\section{Convergence analysis under PL condition: asymptotically optimal rate in mean}\label{S_PL_ms_decay}
In this section, we prove Theorem~\ref{F_ave_rate} and Corollary~\ref{TRT}, i.e., the asymptotically optimal convergence rate of \textbf{\texttt{GT-DSGD}} in expectation and the corresponding transient time to achieve network-independent performance, when the global function~$F$ satisfies the PL condition.
Recall that in this context we focus on the following step-size sequence~\cite{OPT_ML}:
\begin{align}\label{ss}
\a_k = \frac{\beta}{k + \gamma}, \qquad\forall k\geq0,
\end{align}
where~$\beta>0$ and~$\gamma>0$ are parameters to be restricted later. We require~$\gamma \geq \beta/\ol{\a}$ so that~$0<\a_k\leq\ol{\a}$ for~$\ol{\a}$ in~\eqref{a_bar}.
We first prove a non-asymptotic rate on the consensus errors.
\begin{lem}\label{cons_rate}
Let Assumption~\ref{f}-\ref{PL} hold. If~$\gamma \geq \max\big\{\frac{\beta}{\ol{\a}},\frac{8}{1-\lambda^2}\big\}$ for~$\ol{\a}$ given in~\eqref{a_bar}, then we have:~$\forall k\geq0$,
\begin{align}\label{induction_cons}
\E\left[\left\|\mb{x}_{k}-\mb{J}\mb{x}_{k}\right\|^2\right] 
\leq \frac{\wh{x}\beta^2}{(k+\gamma)^2}.
\end{align}
where~$\wh{x} := 8\lambda^2\wh{y}(1-\lambda^2)^{-2}$ for~$\wh{y}$ given in~\eqref{y_hat}.
\end{lem}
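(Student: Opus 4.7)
The plan is to prove the claim by induction on $k$, with the induction step powered by the first consensus inequality in Lemma~\ref{cons} combined with the uniform gradient-tracking bound from Lemma~\ref{Y_bounded}. The base case $k=0$ is immediate because $\x_0 = \J\x_0$ (the initialization assumption $\x_0^i = \x_0^r$ for all $i,r$) makes the left-hand side zero while the right-hand side is positive.

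For the induction step, assume \eqref{induction_cons} holds at step $k$. The first inequality of Lemma~\ref{cons}, after taking expectations and applying Lemma~\ref{Y_bounded} to the gradient-tracking term, yields
\begin{align*}
\E\big[\ln\x_{k+1}-\J\x_{k+1}\rn^2\big]
\leq \frac{1+\lambda^2}{2}\E\big[\ln\x_k-\J\x_k\rn^2\big] + \frac{2\a_k^2\lambda^2\wh{y}}{1-\lambda^2}.
\end{align*}
Substituting the inductive hypothesis and the definition $\a_k = \beta/(k+\gamma)$, and using $\frac{2\lambda^2\wh y}{1-\lambda^2} = \frac{\wh x(1-\lambda^2)}{4}$ by definition of $\wh x$, the right-hand side collapses to $\frac{\wh{x}\beta^2}{(k+\gamma)^2}\cdot\frac{3+\lambda^2}{4}$. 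Hence it suffices to establish the deterministic inequality
\begin{align*}
\frac{3+\lambda^2}{4(k+\gamma)^2} \;\leq\; \frac{1}{(k+\gamma+1)^2}.
\end{align*}

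Rearranging, this is equivalent to $(k+\gamma)^2\,\frac{1-\lambda^2}{3+\lambda^2} \geq 2(k+\gamma)+1$, which, using $3+\lambda^2 \leq 4$, is implied by $(k+\gamma)\,(1-\lambda^2) \geq 8$, i.e., $k+\gamma \geq \frac{8}{1-\lambda^2}$. This is guaranteed by the hypothesis $\gamma \geq \frac{8}{1-\lambda^2}$, closing the induction.

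I do not expect a serious obstacle here: the proof is essentially a clean induction once one notices that the coefficient $\wh x$ is chosen exactly so that, after one application of the contraction $\frac{1+\lambda^2}{2}$, the residual variance term $\frac{2\a_k^2\lambda^2\wh y}{1-\lambda^2}$ can be absorbed and still leave a margin $\frac{1-\lambda^2}{4}$ to accommodate the shift from $(k+\gamma)^{-2}$ to $(k+\gamma+1)^{-2}$. The only mildly delicate step is verifying that the lower bound $\gamma \geq 8/(1-\lambda^2)$ is sufficient to swallow the $2(k+\gamma)+1$ slack coming from the square expansion, which is why both conditions on $\gamma$ (via $\ol{\a}$ and via $8/(1-\lambda^2)$) appear in the statement.
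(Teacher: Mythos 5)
Your proof is the same induction as the paper's: the base case from the common initialization, the induction step from the first inequality of Lemma~\ref{cons} combined with the uniform bound of Lemma~\ref{Y_bounded}, and the observation that the choice $\wh{x}=8\lambda^2\wh{y}(1-\lambda^2)^{-2}$ leaves exactly a margin of $\tfrac{1-\lambda^2}{4}$ for the shift from $(k+\gamma)^{-2}$ to $(k+\gamma+1)^{-2}$. One small correction to the last step, though: the implication ``$(k+\gamma)^2\tfrac{1-\lambda^2}{4}\geq 2(k+\gamma)+1$ is implied by $(k+\gamma)(1-\lambda^2)\geq 8$'' is false at the boundary. Writing $m=k+\gamma$, the hypothesis gives $m^2(1-\lambda^2)\geq 8m$, but you need $8m+4$; indeed at $m=\gamma=\tfrac{8}{1-\lambda^2}$ the left side is $\tfrac{16}{1-\lambda^2}$ and the right side is $\tfrac{16}{1-\lambda^2}+1$. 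The target inequality $\tfrac{3+\lambda^2}{4(k+\gamma)^2}\leq\tfrac{1}{(k+\gamma+1)^2}$ is nevertheless true under the stated hypothesis; the clean way to close it (and what the paper does) is to rewrite it as $\tfrac{2m+1}{(m+1)^2}\leq\tfrac{1-\lambda^2}{4}$ and use $\tfrac{2m+1}{(m+1)^2}\leq\tfrac{2}{m}\leq\tfrac{2}{\gamma}\leq\tfrac{1-\lambda^2}{4}$. The loss in your version comes precisely from replacing the denominator $(m+1)^2$ by $m^2$ before bounding, which discards the additive slack that absorbs the ``$+1$.''
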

\begin{proof}
We prove by induction that there exists a constant~$\wh{x}$ such that~\eqref{induction_cons} holds. First, since~$\mb{x}_0^i = \mb{x}_0^r,\forall i,r\in\mc{V}$,~\eqref{induction_cons} holds trivially when~$k = 0$.
We next show that if~\eqref{induction_cons} holds for some~$k\geq0$ and then it also holds for~$k+1$. From Lemma~\ref{cons} and~\ref{Y_bounded}, we have:~$\forall k\geq0$,
\begin{align*}
\E[\left\|\mb{x}_{k+1}-\mb{J}\mb{x}_{k+1}\right\|^2] 
\leq\dfrac{1+\lambda^2}{2}\E[\left\|\mb{x}_k-\mb{J}\mb{x}_k\right\|^2] + \dfrac{2\lambda^2\wh{y}\a_k^2}{1-\lambda^2}. 
\end{align*}
Therefore, it suffices to choose~$\wh{x}$ such that~$\forall k\geq0$,
\begin{align*}
\dfrac{1+\lambda^2}{2}\frac{\wh{x}\beta^2}{(k+\gamma)^2}
+ \dfrac{2\lambda^2\wh{y}}{1-\lambda^2}\frac{\beta^2}{(k+\gamma)^2}
\leq \frac{\wh{x}\beta^2}{(k+\gamma+1)^2}, 
\end{align*}
which is equivalent to 
\begin{align}\label{con_rate_induction}
\dfrac{2\lambda^2\wh{y}}{1-\lambda^2}
\leq \left(\frac{(k+\gamma)^2}{(k+\gamma+1)^2} - \dfrac{1+\lambda^2}{2}\right)\wh{x}.
\end{align}
Since the RHS of~\eqref{con_rate_induction} monotonically increases with~$k$, we suffice to choose~$\wh{x}$ such that~\eqref{con_rate_induction} holds when~$k=0$, i.e.,$$\frac{2\lambda^2\wh{y}}{1-\lambda^2}
\leq \bigg(\frac{\gamma^2}{(\gamma+1)^2} - \frac{1+\lambda^2}{2}\bigg)\wh{x}
= \bigg(\frac{1-\lambda^2}{2}-\frac{2\gamma+1}{(\gamma+1)^2}\bigg)\wh{x}.$$
Since~$\frac{2\gamma + 1}{(\gamma+1)^2} \leq\frac{2}{\gamma}$,
it suffices to choose~$\wh{x}$ such that~$\frac{2\lambda^2\wh{y}}{1-\lambda^2}
\leq \big(\frac{1-\lambda^2}{2} - \frac{2}{\gamma} \big)\wh{x}.$
Finally, if~$\gamma \geq \frac{8}{1-\lambda^2}$, it can be observed that the induction is complete by setting~$\wh{x} := 8\lambda^2\wh{y}(1-\lambda^2)^{-2}$.
\end{proof}
\noindent
We next present a useful lemma adapted from~\cite{SAbook_AMS,GLE_kar,DSGD_Pu}. 
\begin{lem}\label{sa_ss}
Consider the step-size sequence~$\{\a_k\}$ in~\eqref{ss}. We have: for any nonnegative integers~$a,b$ such that $0\leq a\leq b,$ 
$$\prod_{s = a}^{b}\left(1-\mu\a_s\right) \leq \frac{(a+\gamma)^{\mu\beta}}{(b+\gamma+1)^{\mu\beta}}.$$
\end{lem}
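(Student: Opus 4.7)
The plan is to prove the stated product bound by the classical strategy of converting the product into an exponential sum via the inequality $1 - x \leq e^{-x}$ and then bounding the resulting harmonic-type sum from below by an integral.

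First I would verify that every factor $1 - \mu\alpha_s$ is strictly positive on the relevant index range, so that the product is meaningful and the termwise inequality propagates cleanly. Since $\gamma \geq \beta/\bar{\alpha}$ with $\bar{\alpha} \leq 1/(2L) \leq 1/(2\mu)$, we have $\mu\beta/\gamma \leq 1/2$, hence $\mu\alpha_s = \mu\beta/(s+\gamma) \leq 1/2$ for every $s \geq 0$. In particular, $0 < 1 - \mu\alpha_s < 1$, so the product $\prod_{s=a}^{b}(1-\mu\alpha_s)$ is a positive quantity bounded above by $1$.

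Next, using the elementary inequality $1 - x \leq e^{-x}$ (valid for all $x \in \mathbb{R}$), I would apply it termwise to obtain
\begin{align*}
\prod_{s=a}^{b}(1-\mu\alpha_s) \;\leq\; \prod_{s=a}^{b} e^{-\mu\alpha_s} \;=\; \exp\!\left(-\mu\beta \sum_{s=a}^{b} \frac{1}{s+\gamma}\right).
\end{align*}
The sum on the right is a right-Riemann sum for the strictly decreasing function $x \mapsto 1/(x+\gamma)$, so I would lower-bound it by the corresponding integral,
\begin{align*}
\sum_{s=a}^{b} \frac{1}{s+\gamma} \;\geq\; \int_{a}^{b+1} \frac{dx}{x+\gamma} \;=\; \ln\!\left(\frac{b+\gamma+1}{a+\gamma}\right).
\end{align*}

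Substituting this lower bound into the exponent flips the direction (because of the minus sign) and yields
\begin{align*}
\prod_{s=a}^{b}(1-\mu\alpha_s) \;\leq\; \exp\!\left(-\mu\beta \ln\frac{b+\gamma+1}{a+\gamma}\right) \;=\; \frac{(a+\gamma)^{\mu\beta}}{(b+\gamma+1)^{\mu\beta}},
\end{align*}
which is exactly the claimed inequality. I do not anticipate a real obstacle here: the argument is a two-line application of a standard transcendental inequality together with an integral comparison, and the only subtlety is verifying positivity of each factor, which follows immediately from the assumed lower bound on $\gamma$.
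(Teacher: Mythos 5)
Your proof is correct and follows essentially the same route as the paper's: apply $1-x\leq e^{-x}$ termwise and lower-bound the resulting sum $\sum_{s=a}^{b}\tfrac{1}{s+\gamma}$ by the integral $\log\big(\tfrac{b+\gamma+1}{a+\gamma}\big)$. Your preliminary check that each factor is positive (so the termwise inequality can be multiplied) is a small point of rigor the paper leaves implicit, but it does not change the argument.
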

\begin{proof}
By~\eqref{ss} and~$1+x\leq e^x,\forall x\in\R$, we have:~$0\leq a\leq b$,
\begin{align}\label{sa_0}
\prod_{s = a}^{b}\left(1-\mu\a_s\right)
=\prod_{s = a}^{b}\left(1-\frac{\mu\beta}{s+\gamma}\right) 
\leq\exp\bigg\{-\sum_{s=a}^b\frac{\mu\beta}{s+\gamma}\bigg\}.
\end{align}
Since~$\frac{1}{s+\gamma} \geq \int_{s+\gamma}^{s+\gamma+1}\frac{1}{x}dx,\forall s\geq0$, we have:~$0\leq a\leq b$,
\begin{align}\label{sa_1}
\sum_{s=a}^{b}\frac{1}{s+\gamma}
\geq
\sum_{s=a}^{b}\int_{s+\gamma}^{s+\gamma+1}\frac{1}{x}dx 
= \log\left(\frac{b+\gamma+1}{a+\gamma}\right).
\end{align}
Applying~\eqref{sa_1} to~\eqref{sa_0} completes the proof.
\end{proof}
Now we are ready to prove Theorem~\ref{F_ave_rate} through a non-asymptotic analysis inspired by~\cite{GLE_kar,MP_Pu,DSGD_Pu,SGP_olshevsky,DSGD_SPM_Pu}.
\begin{TH3}
We denote~$\Psi_k := \E[L^{-1}(F(\ol{\x}_{k})- F^*)]$. Using Lemma~\ref{cons_rate} in Lemma~\ref{ds_PL} gives: if~$\gamma \geq \max\big\{\frac{\beta}{\ol{\a}},\frac{8}{1-\lambda^2}\big\}$,
\begin{align}\label{Delta}
\Psi_{k+1} 
\leq (1 -\mu\a_k)\Psi_{k} 
+ \wh{u}\a_{k}^2
+ \wh{z}\a_{k}^3
,\quad\forall k\geq0,
\end{align}
where~$\wh{u}$ and~$\wh{z}$ are defined as, for~$\wh{x}$ given in~\eqref{induction_cons},
\begin{align}\label{u_hat_z_hat}
\wh{u} := \frac{\nu_a^2}{2n} \qquad \text{and} \qquad \wh{z} := \frac{L\wh{x}}{2n}.    
\end{align}
We recursively apply~\eqref{Delta} from~$k$ to~$0$ to obtain\footnote{For a sequence~$\{s_k\}$, we adopt the convention~$\prod_{k = x}^{y}s_k = 1$ if~$y<x$.}:~$\forall k\geq1$,
\begin{align}\label{Delta1}
&\Psi_k \n\\
\leq&~\Psi_0\prod_{t=0}^{k-1}(1-\mu\a_t) + \sum_{t=0}^{k-1}\bigg(\left( \wh{u}\a_{t}^2 + \wh{z}\a_{t}^3\right)\prod_{l = t+1}^{k-1}(1-\mu\a_l)\bigg) \n\\
\leq&~\Psi_0\frac{\gamma^{\mu\beta}}{(k+\gamma)^{\mu\beta}}
+ \sum_{t=0}^{k-1}\Big(\frac{\wh{u}\beta^2}{(t+\gamma)^2}+\frac{\wh{z}\beta^3}{(t+\gamma)^3}\Big)\frac{(t+1+\gamma)^{\mu\beta}}{(k+\gamma)^{\mu\beta}} \n\\
=&~\Psi_0\frac{\gamma^{\mu\beta}}{(k+\gamma)^{\mu\beta}}
+\frac{\wh{u}\beta^2}{(k+\gamma)^{\mu\beta}}\sum_{t=0}^{k-1}
\frac{(t+1+\gamma)^{\mu\beta}}{(t+\gamma)^2} \n\\
&+ \frac{\wh{z}\beta^3}{(k+\gamma)^{\mu\beta}}\sum_{t=0}^{k-1}\frac{(t+1+\gamma)^{\mu\beta}}{(t+\gamma)^3},  
\end{align}
where the second inequality is due to Lemma~\ref{sa_ss}.
Furthermore, by~$1+x\leq e^x,\forall x\in\R$, we have: for~$0\leq t\leq k-1$,
\begin{align}\label{magic_e}
\frac{(t+1+\gamma)^{\mu\beta}}{(t+\gamma)^{\mu\beta}}
= \left(1 + \frac{1}{t+\gamma}\right)^{\mu\beta}
\leq\exp\left\{\frac{\mu\beta}{\gamma}\right\} 
\leq&\sqrt{e},
\end{align}
where the last inequality uses~$\mu\beta/\gamma\leq\mu\ol{\a}\leq0.5$.
We use~\eqref{magic_e} in~\eqref{Delta1} to obtain:~$\forall k\geq1$,
\begin{align}\label{Delta11}
\Psi_k
\leq&~\Psi_0\frac{\gamma^{\mu\beta}}{(k+\gamma)^{\mu\beta}}
+\frac{\sqrt{e}\wh{u}\beta^2}{(k+\gamma)^{\mu\beta}}\sum_{s=\gamma}^{k-1+\gamma}s^{\mu\beta-2} \n\\
&+ \frac{\sqrt{e}\wh{z}\beta^3}{(k+\gamma)^{\mu\beta}}\sum_{s=\gamma}^{k-1+\gamma}s^{\mu\beta-3}.
\end{align}
By~$s^{\mu\beta-2} \leq \max\big\{\int_{s}^{s+1}x^{\mu\beta-2}dx,\int_{s-1}^{s}x^{\mu\beta-2}dx\big\}$, we have: if~$\beta > 1/\mu$, then~$\forall k\geq1$,
\begin{align}\label{opt_decay}
\sum_{s=\gamma}^{k-1+\gamma}s^{\mu\beta-2} 
\leq\int_{\gamma-1}^{k+\gamma}x^{\mu\beta-2}dx
\leq&\frac{(k+\gamma)^{\mu\beta-1}}{\mu\beta-1}.
\end{align}
Likewise, by~$s^{\mu\beta-3} \leq \max\big\{\int_{s}^{s+1}x^{\mu\beta-3}dx,\int_{s-1}^{s}x^{\mu\beta-3}dx\big\}$, we have: if~$\beta > 2/\mu$, then~$\forall k\geq1$,
\begin{align}\label{cons_decay}
\sum_{s=\gamma}^{k-1+\gamma}s^{\mu\beta-3} 
\leq \int_{\gamma-1}^{k+\gamma}x^{\mu\beta-3}dx
\leq&\frac{(k+\gamma)^{\mu\beta-2}}{\mu\beta-2}.
\end{align}
Now, we apply~\eqref{opt_decay} and~\eqref{cons_decay} in~\eqref{Delta11} to obtain:~$\forall k\geq1$,
\begin{align}\label{Delta2}
\Psi_k 
\leq&~\frac{\Psi_0\gamma^{\mu\beta}}{(k+\gamma)^{\mu\beta}}
+\frac{\sqrt{e}\wh{u}\beta^2}{(\mu\beta-1)(k+\gamma)} 
+ \frac{\sqrt{e}\wh{z}\beta^3}{(\mu\beta-2)(k+\gamma)^{2}}.
\end{align}
Using~\eqref{Delta2} and~Lemma~\ref{cons_rate} in Lemma~\ref{F_ave}, we obtain:~$\forall k\geq1$,
\begin{align*}
\frac{1}{n}\sum_{i=1}^n\E[F(\x_k^i) - F^*] 
\leq&\frac{2(F(\ol{\x}_0) - F^*)}{(k/\gamma+1)^{\mu\beta}}
+\frac{2\sqrt{e}L\wh{u}\beta^2}{(\mu\beta-1)(k+\gamma)} \n\\
&+ \frac{2\sqrt{e}L\wh{z}\beta^3}{(\mu\beta-2)(k+\gamma)^{2}} 
+ \frac{2\wh{z}\beta^2}{(k+\gamma)^2}.    
\end{align*}
The proof follows by that~$\frac{\wh{z}\beta^2}{(k+\gamma)^2}\leq\frac{L\wh{z}\beta^3}{(\mu\beta-2)(k+\gamma)^{2}}$ and by recalling the definitions of~$\wh{u}$ and~$\wh{z}$ given in~\eqref{u_hat_z_hat}.
\end{TH3}
\begin{C2}
We derive the conditions under which the rate expression in Theorem~\ref{F_ave_rate} is network-independent. We first solve for the lower bound on~$k$ such that
\begin{align*}
\frac{L\nu_a^2\beta^2}{n(\mu\beta-1)(k+\gamma)}
\geq \frac{L^2\wh{x}\beta^3}{n(\mu\beta-2)(k+\gamma)^{2}},
\end{align*}
which may be written equivalently as
\begin{align}\label{k0}
k+\gamma \geq \frac{\mu\beta-1}{\mu\beta-2}\frac{L\wh{x}\beta}{\nu_a^2}. 
\end{align}
We suppose that~$\ln\nf_0\rn^2 = \mc{O}(n)$,~$\beta = \theta/\mu$, where~$\theta >2$. Since~$\ol{\a}L=\mc{O}\big(\frac{1-\lambda}{\lambda\kappa^{1/4}}\big)$, for~$\ol{\a}$ defined in~\eqref{a_bar},
we have 
\begin{align*}
\wh{x} = \mc{O}\left(\frac{\lambda^2n\nu_a^2}{(1-\lambda)^3} + \frac{\lambda \kappa^{1/4}\nu_a^2}{1-\lambda} + \frac{\lambda^2nL(F(\ol{\x}_0)-F^*)}{(1-\lambda)^2\kappa^{1/2}}\right),
\end{align*}
where~$\wh{x}$ is defined in~\eqref{induction_cons}.
Therefore, to make~\eqref{k0} hold, it suffices to let
\begin{align}\label{k1}
k \gtrsim \frac{\lambda^2n\kappa}{(1-\lambda)^3}+\frac{\lambda\kappa^{5/4}}{1-\lambda}+ \frac{\lambda^2n\kappa^{1/2}L(F(\ol{\x}_0)-F^*)}{(1-\lambda)^2\nu_a^2}.    
\end{align}
Next, we solve for the range of~$k$ such that for some~${\delta \in [1,\theta)}$, $(\frac{k}{\gamma}+1)^{\theta} \geq (\frac{k+1}{\kappa})^\delta$, i.e.,~$\frac{(k+\gamma)^\theta}{(k+1)^\delta} \geq \frac{\gamma^\theta}{\kappa^\delta}$.
Since~$\gamma >1$, it suffices choose~$k$ such that 
\begin{align}\label{k2}
k \geq \gamma^{\frac{\theta}{\theta-\delta}}\kappa^{-\frac{\delta}{\theta-\delta}}.    
\end{align}
We fix $\gamma = \max\{\frac{\theta}{\mu\ol{\a}},\frac{8}{1-\lambda^2}\} \asymp \max\{\kappa, \frac{\lambda^2\kappa}{(1-\lambda)^2},\frac{\lambda\kappa^{5/4}}{1-\lambda},\frac{1}{1-\lambda}\}
$.
Using~\eqref{k1} and~\eqref{k2} in Theorem~\ref{F_ave_rate}, we have 
\begin{align*}
\frac{1}{n}\sum_{i=1}^n\left(F(\x_k^i) - F^*\right) 
=&~\mc{O}\left(\frac{\kappa^{\delta}\left(F(\ol{\x}_0) - F^*\right)}{k^{\delta}}
+\frac{\kappa\nu_a^2}{n\mu k}\right),
\end{align*}
if~$k \gtrsim \max\left\{K_1,K_2\right\}$,
where $K_1$ and~$K_2$ are given by
\begin{align*}
K_1 =&~\frac{\lambda^2n\kappa}{(1-\lambda)^3}+\frac{\lambda\kappa^{5/4}}{1-\lambda}+ \frac{\lambda^2n\kappa^{1/2}L(F(\ol{\x}_0)-F^*)}{(1-\lambda)^2\nu_a^2},  \\
K_2 =&~\max\left\{\kappa, \frac{\lambda^2\kappa}{(1-\lambda)^2},\frac{\lambda\kappa^{5/4}}{1-\lambda},\frac{1}{1-\lambda}\right\}^{\frac{\theta}{\theta-\delta}}\kappa^{-\frac{\delta}{\theta-\delta}}.
\end{align*}
The proof follows by setting~$\delta = 2$ and~$\theta = 6$ in the above.
\end{C2}

\vspace{-0.2cm}
\section{Conclusion}\label{S_conc}
In this paper, we comprehensively improve the existing convergence results of stochastic first-order methods based on gradient tracking for online stochastic nonconvex problems. In particular, for both constant and decaying step-sizes, we systematically develop the conditions under which the performance of \textbf{\texttt{GT-DSGD}} matches that of the centralized minibatch \textbf{\texttt{SGD}} for both general non-convex functions and non-convex functions that further satisfy the PL condition. Our results significantly improve upon the existing theory, which suggests that \textbf{\texttt{GT-DSGD}} is strictly worse than centralized minibatch \textbf{\texttt{SGD}}. For a family of stochastic approximation step-sizes, we establish the global sublinear convergence to an optimal solution on almost every sample path of~\DSGT~when the global objective function satisfies the PL condition.


\vspace{-0.2cm}
\bibliographystyle{IEEEbib}
\bibliography{sample}

\end{document}